\newtheorem{theorem}{Theorem}
\numberwithin{theorem}{section}
\newtheorem{corollary}[theorem]{Corollary}
\newtheorem{lemma}[theorem]{Lemma}
\newtheorem{proposition}[theorem]{Proposition}
\newtheorem*{remark}{Remark}
\newcommand{\leg}[2]{\left(\frac{#1}{#2}\right)}
\newcommand{\G}{\Gamma}
\newcommand{\C}{\mathbb{C}}
\newcommand{\R}{\mathbb{R}}
\newcommand{\Z}{\mathbb{Z}}
\renewcommand{\H}{\mathbb{H}}
\newcommand{\sm}[4]{\left(\begin{smallmatrix} #1 & #2 \\ #3 & #4\end{smallmatrix}\right)}
\newcommand{\ds}{\displaystyle}
\begin{document}

\author{Amanda Folsom, Sharon Garthwaite, Soon-Yi Kang, Holly Swisher, Stephanie Treneer}

\title{Quantum mock modular forms arising from eta-theta functions}

\maketitle

\begin{abstract} In 2013, Lemke Oliver classified all eta-quotients which are theta functions.  In this paper, we unify the eta-theta functions by constructing
mock modular forms from the eta-theta functions with even characters, such that the shadows of these mock modular forms are given by the eta-theta functions with odd characters.   In addition, we prove that our mock modular forms are quantum modular forms.  As corollaries, we establish simple finite hypergeometric expressions which may be used to evaluate Eichler integrals of the odd eta-theta functions, as well as some curious algebraic identities.
\end{abstract}

\section{Introduction and Statement of Results} \label{intro}
One of the most well-known modular forms of weight $1/2$ is Dedekind's $\eta$-function, defined for $\tau$ in the upper half-plane $\mathbb H:=\{\tau \in \mathbb C \ | \ \text{Im}(\tau)>0\}$ by
\begin{align}
\label{eqn_etadef}\eta(\tau) := q^{\frac{1}{24}}\prod_{n=1}^\infty (1-q^n) = \sum_{m \geq 1}     \Big(\frac{12}{m}\Big)q^{\frac{m^2}{24}},
\end{align}
where $q=e(\tau)$, and $\left(\frac{\cdot}{\cdot}\right)$ is the Kronecker symbol (throughout we set $e(u):=e^{2\pi i u}$).   
More generally, \emph{eta-products}, functions of the form
\begin{align}\label{def_etaprod} \prod_{j=1}^c\eta(a_j \tau)^{b_j},\end{align}
where $a_j, b_j,$ and $c$ are positive integers, have been of interest not only within the classical theory of modular forms, but also in connection to the representation theory of finite groups. Conway and Norton \cite{CN} showed that many character generating functions for the ``Monster" group $\mathbb M$, the largest of the finite sporadic simple groups, could be realized as \emph{eta-quotients}, which are of the same form as the functions in (\ref{def_etaprod}), but allow negative integer exponents $b_j$.  Mason \cite{Mason} similarly exhibited many character generating functions for the Mathieu group $\mathbb M_{24}$ as {\emph{multiplicative}} eta-products, meaning their $q$-series have multiplicative coefficients, as seen in (\ref{eqn_etadef})  for example.  This relationship to character generating functions in part motivated Dummit, Kisilevsky and McKay \cite{DKM} to classify all multiplicative eta-products.  Later, Martin \cite{Martin} classified all multiplicative integer weight eta-quotients.

In addition to being a simple example of a multiplicative $q$-series, the right-most function in (\ref{eqn_etadef}) is also an example of a \emph{theta function}, which is of the form \begin{align}\label{eqn_thetapsi}\theta_\chi(\tau) := \sum_n \chi(n) n^\nu q^n,\end{align}  where $\chi$ is an even (resp. odd) Dirichlet character, and $\nu$ equals $0$ (resp. 1).  The sum in (\ref{eqn_thetapsi}) is taken over $n \in \mathbb Z$ or $n \in \mathbb N$, depending on whether or not $\chi$ is trivial. It is well-known that such functions are ordinary modular forms of weight $1/2 + \nu$.  
While (\ref{eqn_etadef}) shows an eta-quotient which is also a theta function, it is not true in general that all multiplicative eta-quotients are also theta functions.  This question was studied by Lemke Oliver \cite{LO}, who classified all eta-quotients which are also theta functions;  in particular, his classification gives six odd \emph{eta-theta functions} $E_m$, and eighteen even eta-theta functions $e_n$ (some of which are twists by certain principal characters).  By ``odd (resp. even) eta-theta function", we mean an eta-quotient which is also a theta function with odd (resp. even) character. See Section \ref{LOprelim} for more on these functions.

Modular theta functions also naturally emerge in the theory of harmonic Maass forms, which  are certain non-holomorphic functions that transform like modular forms.  
 Harmonic Maass forms $\widehat{M}$, as originally defined by Bruinier and Funke \cite{BF}, naturally decompose into two parts as $\widehat{M} = M  + M^-$, where $M$ is the \emph{holomorphic part} of $\widehat{M}$, and $M^-$ is the \emph{non-holomorphic part} of $\widehat{M}$.  
For example, when viewed as a function of $\tau$, we now know that the function
\begin{align}\label{def_fqmock}
q^{-\frac{1}{24}} \sum_{n\geq 0}  \frac{q^{n^2}}{(-q;q)_n^2} + 2 i \sqrt{3} \int_{-\overline{\tau}}^{i\infty} \frac{\sum_{n \in \mathbb Z}   \left(n+\tfrac16\right) e^{3\pi i z\big (n+\tfrac16\big )^2  }}{\sqrt{-i(z+\tau)}} dz,
\end{align}
where $(a;q)_n:=\prod_{j=0}^{n-1} (1-a q^j)$, is a harmonic Maass form due to work of Zwegers \cite{Zwegers2}.   Its holomorphic part, namely the $q$-hypergeometric series in (\ref{def_fqmock}), is one of Ramanujan's original \emph{mock theta functions},  certain curious $q$-series whose exact modular properties were unknown for almost a century.  Beautifully, all of Ramanujan's original mock theta functions turn out to be examples of holomorphic parts of harmonic Maass forms \cite{Zwegers2}, and we now define after Zagier \cite{ZagierB} a \emph{mock modular form} to be any holomorphic part of a harmonic Maass form.  Mock modular forms come naturally equipped with a \emph{shadow}, a certain modular cusp form related via a differential operator,  which we formally define in Section \ref{sub:construction}.   In the example given above in (\ref{def_fqmock}), it turns out that the shadow of Ramanujan's $q$-hypergeometric mock theta function is essentially the modular theta function  given in the numerator of the integral appearing there (up to a simple multiplicative factor).
 
As mentioned above, character generating functions for $\mathbb M_{24}$ appear as multiplicative eta-products. We now also know that there is a rich Moonshine phenomenon surrounding mock modular forms.  Eguchi, Ooguri and Tachikawa \cite{EOT}, in analogy to the original Moonshine conjectures, observed that certain characters for the Mathieu group $\mathbb M_{24}$ appeared to be related to mock modular forms.  Their work was later generalized and greatly extended by Cheng, Duncan, and Harvey \cite{CDH}, who developed an ``umbral Moonshine" theory.  Their umbral Moonshine conjectures were recently proved by Duncan, Griffin, and Ono \cite{DGO}.

These connections serve as motivation for the first set of results in this paper.  In Section \ref{constructingV}, we unify the eta-theta functions  by constructing  mock modular forms which encode them  in the following ways.  We define functions $V_{mn}$ using the even eta-theta functions $e_n$, and in Theorem \ref{thm_modvij}, we prove that these functions $V_{mn}$ are mock modular forms, with the additional property that their shadows are given by the odd eta-theta functions $E_m$.

 To describe these results, we introduce some notation.   The functions $V_{mn}$ are indexed by pairs $(m,n)$ where $m \in T':=\{1,2,3,4,4',4'',5,6\}$ and $n\in \mathbb N$, and the admissible values for $n$ are dependent on $m$.  Throughout, we will call a pair $(m,n)$ \emph{admissible} if it is used to index one of our functions $V_{mn}$. In total, there are 59 admissible pairs $(m,n)$ where $m\in T'$.  When we restrict $m\in T:=\{1,2,3,4,5,6\}$, a particular subset which we also consider, there are a total of 43 admissible pairs $(m,n)$.   We provide a complete list of these functions in the Appendix.  The groups $A_{mn}$, integers $k^{(mn)}_{\gamma_{mn}}, \ell^{(mn)}_{\gamma_{mn}}, r^{(mn)}_{\gamma_{mn}},$ and $s^{(mn)}_{\gamma_{mn}}$, and roots of unity $\varepsilon_{\gamma_{mn}}^{(m)}$ appearing in Theorem \ref{thm_modvij} below and throughout are defined in Section \ref{constructingV}; the root of unity $\psi$ is defined in Lemma \ref{lem_etatransf}, and the constants $c_m$ are defined in Section \ref{sec_quantum}.
 
\begin{theorem}\label{thm_modvij}
 For any admissible pair $(m,n)$ with $m \in T'$, the functions $V_{mn}$  are mock modular forms of weight $1/2$ with respect to the congruence subgroups $A_{mn}$.  Moreover, for $m\in T$, the shadow of $V_{mn}$ is given by a constant multiple of the odd eta-theta function $E_m\left(\frac{2\tau}{c_m^2}\right)$.   In particular, the functions $V_{mn}, m\in T'$, may be completed to  form harmonic Maass forms $\widehat{V}_{mn}$ of weight $1/2$ on $A_{mn}$, which satisfy for all $\gamma_{mn} = \sm{a_{mn}}{b_{mn}}{c_{mn}}{d_{mn}} \in A_{mn}$, and $\tau \in \mathbb H$,
$$ \widehat{V}_{mn}(\gamma_{mn}\tau) =  \psi(\gamma_{mn})^{-3} (-1)^{k^{(mn)}_{\gamma_{mn}} + \ell^{(mn)}_{\gamma_{mn}} +  r^{(mn)}_{\gamma_{mn}} + s^{(mn)}_{\gamma_{mn}}}\varepsilon_{\gamma_{mn}}^{(m)}(c_{mn}\tau+d_{mn})^{\frac12}  \widehat{V}_{mn}(\tau).$$ \end{theorem}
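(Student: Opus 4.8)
The plan is to prove the three assertions simultaneously by exhibiting an explicit harmonic completion of $V_{mn}$ and checking its transformation law directly, following the standard recipe for producing a mock modular form with prescribed shadow. Write $g_m(\tau) := E_m\!\left(2\tau/c_m^2\right)$, which by Lemke Oliver's classification is a unary theta series of weight $3/2$ attached to an odd character, hence a cusp form. Mirroring the completion appearing in (\ref{def_fqmock}), I would form its non-holomorphic Eichler integral
$$ g_m^{*}(\tau) := C_m \int_{-\overline{\tau}}^{i\infty} \frac{g_m(z)}{\sqrt{-i(z+\tau)}}\, dz, $$
with $C_m$ a normalizing constant chosen so that $\xi_{1/2}(g_m^{*})$ is the desired multiple of $g_m$. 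Since $g_m^{*}$ is annihilated by the weight-$1/2$ Laplacian and $V_{mn}$ is holomorphic, the function $\widehat{V}_{mn} := V_{mn} + g_m^{*}$ is harmonic; thus once the displayed transformation law is established, $\widehat{V}_{mn}$ is a harmonic Maass form of weight $1/2$ on $A_{mn}$, its holomorphic part $V_{mn}$ is mock modular, and for $m\in T$ its shadow is the asserted multiple of $g_m$, yielding all claims at once.

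The first computation is the transformation of the holomorphic piece. Using the explicit description of $V_{mn}$ in terms of $e_n$ from Section \ref{constructingV} together with the eta-transformation formula of Lemma \ref{lem_etatransf}, I would evaluate $V_{mn}(\gamma_{mn}\tau)$ for $\gamma_{mn}=\sm{a_{mn}}{b_{mn}}{c_{mn}}{d_{mn}}\in A_{mn}$. The multiplier $\psi(\gamma_{mn})^{-3}(-1)^{k^{(mn)}_{\gamma_{mn}}+\ell^{(mn)}_{\gamma_{mn}}+r^{(mn)}_{\gamma_{mn}}+s^{(mn)}_{\gamma_{mn}}}\varepsilon^{(m)}_{\gamma_{mn}}(c_{mn}\tau+d_{mn})^{1/2}$ emerges from this: the power of $\psi$ records the combined eta-multiplier of the eta-theta building block, the integers $k,\ell,r,s$ (read off from how $\gamma_{mn}$ moves the scaled arguments) resolve the branch and sign ambiguities inherent to the half-integral weight cocycle, and $\varepsilon^{(m)}_{\gamma_{mn}}$ carries the character twist. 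Because $V_{mn}$ is only mock modular, this step does not close up exactly and leaves an additive period term measuring the failure of $V_{mn}$ to be modular.

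The second computation is the transformation of $g_m^{*}$. Since $g_m$ is genuinely modular of weight $3/2$ with a known multiplier, splitting and reparametrizing the contour under $z\mapsto\gamma_{mn}z$ shows that $g_m^{*}(\gamma_{mn}\tau)$ equals the \emph{same} automorphy factor times $g_m^{*}(\tau)$ plus its own period integral. The heart of the argument—and the step I expect to be the main obstacle—is to show that this period integral exactly cancels the one left over from $V_{mn}$, so that the two defects annihilate and $\widehat{V}_{mn}$ satisfies the clean transformation law. This cancellation is forced by the precise relationship between $e_n$ and $E_m$ (even versus odd character, and the rescaling by $2/c_m^{2}$ fixed by the constants $c_m$ of Section \ref{sec_quantum}), and verifying it uniformly demands careful bookkeeping of the multiplier systems across all $59$ admissible pairs, together with a reduction from a generating set of $A_{mn}$ to arbitrary $\gamma_{mn}$ via the cocycle relation.

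Finally, for the cases $m\in T'\setminus T$ (the twisted functions indexed by $4'$ and $4''$) the shadow is not asserted, but the transformation law still must hold. Here I would argue directly that, since these functions are principal-character twists of their untwisted counterparts, their multiplier systems differ only by an explicit and easily tracked character, leaving the displayed automorphy factor $\psi(\gamma_{mn})^{-3}(-1)^{k^{(mn)}_{\gamma_{mn}}+\ell^{(mn)}_{\gamma_{mn}}+r^{(mn)}_{\gamma_{mn}}+s^{(mn)}_{\gamma_{mn}}}\varepsilon^{(m)}_{\gamma_{mn}}(c_{mn}\tau+d_{mn})^{1/2}$ intact.
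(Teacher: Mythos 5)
There is a genuine gap, and it sits exactly at the step you yourself flag as the heart of the argument. Your first computation proposes to ``evaluate $V_{mn}(\gamma_{mn}\tau)$'' using the series description of $V_{mn}$ together with the eta-transformation formula of Lemma \ref{lem_etatransf}, and to read off ``an additive period term measuring the failure of $V_{mn}$ to be modular.'' No such computation is available: $V_{mn}$ is (up to the factor $1/e_n$) a Lambert-type series, not an eta-quotient, and its modular defect cannot be extracted from the transformation of $\eta$ or $e_n$ alone --- identifying that defect as a period integral of a weight-$3/2$ theta function is precisely the nontrivial content of the theorem, and historically is the content of Zwegers' thesis. The paper never transforms the holomorphic part in isolation. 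Instead it writes $V_{mn}(\tau)=w_mq^{t_m}\mu\big(u^{(mn)}_\tau,v^{(mn)}_\tau;\tau\big)$ and completes via Zwegers' $\widehat{\mu}=\mu+\tfrac{i}{2}R$, so that the clean transformation law of $\widehat{V}_{mn}$ is an application of the Jacobi-form transformation of $\widehat{\mu}$ (Lemma \ref{lem_muhat}, packaged as Lemma \ref{lem_transhat}); the real work is then bookkeeping --- using Lemma \ref{lem_integers} and Corollary \ref{cor_helper} to find the groups $A_{mn}$ and the integers $k,\ell,r,s$, using Lemma \ref{lem_invuv} to show the extra automorphy factor collapses to a root of unity $\varepsilon^{(m)}_\gamma$ independent of $n$ and $\tau$, and checking harmonicity and the shadow via Proposition \ref{prop_p1shadows} (the $\xi_{1/2}$-image of the $R$-term), not via the Eichler integral. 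In short, your plan assumes as computable input the very cancellation mechanism (Mordell integrals, the $R$-function, and Zwegers' Theorem \ref{thm_z116} relating them) that must be invoked as a theorem; without citing and using that machinery, neither your step one nor the claimed cancellation in step two can be carried out.

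A secondary error: your treatment of $m\in\{4',4''\}$ as ``principal-character twists of their untwisted counterparts'' misdescribes the situation. The functions $V_{4'n}$ and $V_{4''n}$ are independent $\mu$-specializations (with shadow components $g_{\frac{5}{12},\frac12}$ and $g_{\frac{1}{12},\frac12}$, neither of which is an eta-theta function), and $V_{4n}=V_{4'n}+V_{4''n}$. The delicate point the paper must address --- and your proposal does not --- is that the two summands a priori transform with possibly different multipliers on $A_{4n}$, and one must verify by a parity argument and an explicit root-of-unity computation (using $a=1+12a'$, $b=12b'$) that $(-1)^{k+\ell+r+s}\varepsilon^{(4')}_{\gamma}=(-1)^{k+\ell+r+s}\varepsilon^{(4'')}_{\gamma}$ before the sum can be said to satisfy the displayed transformation law.
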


Because there are infinitely many mock modular forms with a given shadow, we are additionally motivated to construct our functions $V_{mn}$ so that they are in some sense canonical.  One way of doing this is by utilizing the even eta-theta functions $e_n$  in the construction of these functions, as we have already mentioned.  Further, we show in Theorem \ref{thm_quantum} that our mock modular forms $V_{mn}$ are also \emph{quantum} modular forms, a property that is not necessarily true of all mock modular forms.  A quantum modular form, as defined by Zagier \cite{Zagier} in 2010, is a complex function defined on an appropriate subset of the rational numbers, as opposed to the upper half-plane, which transforms like a modular form, up to the addition of an error function that is suitably continuous or analytic in $\mathbb R$.  (See Section \ref{sec_quantum} for more detail.)  The theory of quantum modular forms is in its beginning stages;  constructing explicit examples of these functions remains of interest, as does answering the question of how quantum modular forms may arise from mock modular forms (see the recent articles \cite{BFR2, BOPR, FOR}, for example).

The quantum sets $S_{mn}$ of rational numbers and groups $G_{mn}$ pertaining to the forms $V_{mn}$ are defined in Section \ref{sec_quantumsets} and Section \ref{sec_quantum}, and the constants $\ell_m, a_m, b_m, c_m$ and $\kappa_{mn}$ appearing below are defined in Section \ref{sec_quantum}.  Here and throughout, the numbers $\ell_{mn}$ are defined to equal $\ell_m$ or $2$, depending on whether or not $n=1$.  For $N\in\mathbb N$, we define $\zeta_N := e(1/N)$, and for $r\in \mathbb Z$, we let $M_r:=\sm{1}{0}{r}{1}$.
\begin{theorem}\label{thm_quantum}  For any admissible pair $(m,n)$ with $m \in T$,   the functions $V_{mn}$  are quantum modular forms of weight $1/2$ on the sets $S_{mn}\backslash\left\{\frac{-1}{\ell_{mn}}\right\}$ for the groups $G_{mn}$.
In particular, the following are true. \medskip \ \\
\textnormal{(i)} For all $x\in\H\cup S_{mn}\backslash\left\{\frac{-1}{2}\right\}$,  we have that
$$V_{mn}(x)+\zeta_{4}^{\ell_m}(2 x+1)^{-\frac{1}{2}}V_{mn}\left(M_{2} x\right) =-\frac{i}{c_m}\int_{\frac{1}{2}}^{i\infty} \frac{E_m \big(\tfrac{2u}{c_m^2}\big )}{\sqrt{-i(u+x)}}\;du.$$
\noindent \textnormal{(ii)}  For $n=1$ and $m\in\{2,4,6\}$, for all $x\in\H\cup S_{mn}\backslash\left\{-1\right\}$, we also have that
\begin{align}\label{eqn_FEminvertthm1}V_{m1}(x)-\zeta_{8}^{ -1 } 
(  x+1)^{-\frac{1}{2}}V_{m1}\left(M_{1} x\right)=-\frac{i}{c_m}\int_{1}^{i\infty} \frac{E_m \big(\tfrac{2u}{c_m^2}\big )}{\sqrt{-i(u+x)}}\;du. \end{align}  
\noindent \textnormal{(iii)} For all $x\in \H\cup S_{mn}$, we have that
\begin{align}\label{eqn_FEminvertthm2}  
 V_{mn}(x) - \zeta_{a_m}^{\kappa_{mn}}  V_{mn}(x+ \kappa_{mn} b_m)   &= 0.
 \end{align}
\end{theorem}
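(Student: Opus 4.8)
The plan is to deduce the three functional equations from the harmonic Maass form transformation of Theorem \ref{thm_modvij} by splitting the completion $\widehat{V}_{mn}=V_{mn}+V_{mn}^{-}$ into its holomorphic and non-holomorphic parts. Modeled on the non-holomorphic Eichler integral appearing in \eqref{def_fqmock}, the non-holomorphic part is
$$V_{mn}^{-}(\tau)=-\frac{i}{c_m}\int_{-\overline{\tau}}^{i\infty}\frac{E_m\big(\tfrac{2u}{c_m^2}\big)}{\sqrt{-i(u+\tau)}}\,du,$$
normalized so that $\xi_{1/2}$ returns the shadow from Theorem \ref{thm_modvij}. For a fixed $\gamma\in G_{mn}$, I would subtract the relevant automorphy factor times $\widehat{V}_{mn}(\gamma\tau)$ from $\widehat{V}_{mn}(\tau)$; the transformation law cancels the harmonic parts and leaves, for $\tau\in\H$,
$$V_{mn}(\tau)-(\mathrm{aut}_\gamma)^{-1}V_{mn}(\gamma\tau)=(\mathrm{aut}_\gamma)^{-1}V_{mn}^{-}(\gamma\tau)-V_{mn}^{-}(\tau),$$
whose right-hand side is a period integral of the weight $3/2$ form $E_m(\tfrac{2u}{c_m^2})$ and serves as the candidate error function.

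Next I would specialize to the parabolic generators. For part (i), take $\gamma=M_2=\sm{1}{0}{2}{1}$, so that the weight factor $(2\tau+1)^{1/2}$ appears and the elaborate root of unity $\psi(\gamma)^{-3}(-1)^{k^{(mn)}_\gamma+\ell^{(mn)}_\gamma+r^{(mn)}_\gamma+s^{(mn)}_\gamma}\varepsilon^{(m)}_\gamma$ of Theorem \ref{thm_modvij}, once evaluated at this specific $\gamma$, must collapse to the clean factor producing $\zeta_4^{\ell_m}(2x+1)^{-1/2}$. On the non-holomorphic side, the substitution $u\mapsto M_2 u$ in $V_{mn}^{-}(M_2\tau)$, together with the weight $3/2$ modularity of $E_m(\tfrac{2u}{c_m^2})$, telescopes the two integrals into a single one from the cusp $M_2^{-1}(i\infty)=-\tfrac12$, which the reflection $-\overline{\tau}$ in the completion converts into the stated lower limit $\tfrac12$; the singularity of $1/\sqrt{-i(u+x)}$ at $u=\tfrac12$, $x=-\tfrac12$ forces the deletion of the point $-\tfrac12$. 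Part (ii) is the same computation with $\gamma=M_1$, giving lower limit $1$, excluded point $-1$, and the factor $\zeta_8^{-1}(x+1)^{-1/2}$.

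The third step is the passage from $\H$ to the quantum set $S_{mn}$, which is what upgrades these identities to quantum modularity. Here the point is that the period integral on the right-hand side converges and is real-analytic for every real $x\neq-\tfrac1{\ell_{mn}}$, since $E_m$ is a cusp form decaying rapidly toward $i\infty$; thus the error function extends from $\H$ to $\H\cup S_{mn}\backslash\{-\tfrac1{\ell_{mn}}\}$. I would then invoke the construction of $S_{mn}$ in Section \ref{sec_quantumsets}: the rationals are chosen so that $\gamma x$ is a cusp at which the $q$-expansion of $V_{mn}$ has a well-defined limiting value, whence the functional equation itself both defines $V_{mn}$ on $S_{mn}$ and exhibits the required continuity, verifying Zagier's defining property of a quantum modular form of weight $1/2$ for $G_{mn}$.

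Finally, for part (iii) the relevant element is the translation $\sm{1}{\kappa_{mn}b_m}{0}{1}$, whose weight $1/2$ automorphy factor is trivial. Because each $V_{mn}$ is assembled from the even eta-theta functions $e_n$, it carries an explicit $q$-expansion supported on a single arithmetic progression of exponents; checking that $\tau\mapsto\tau+\kappa_{mn}b_m$ multiplies every term by the common root of unity $\zeta_{a_m}^{\kappa_{mn}}$ reduces to a finite verification of the exponent arithmetic across the admissible pairs, and yields \eqref{eqn_FEminvertthm2} with no error. Since $E_m(\tfrac{2u}{c_m^2})$ is periodic under the same translation, $V_{mn}^{-}$ transforms identically and the error is genuinely $0$, so the identity persists on $S_{mn}$ by taking radial limits. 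The main obstacle I anticipate is the bookkeeping of the automorphy factors in (i) and (ii): reducing $\psi(\gamma)^{-3}(-1)^{k^{(mn)}_\gamma+\ell^{(mn)}_\gamma+r^{(mn)}_\gamma+s^{(mn)}_\gamma}\varepsilon^{(m)}_\gamma$ to $\zeta_4^{\ell_m}$ and $\zeta_8^{-1}$ for the specific parabolic matrices, while correctly orienting the telescoped period integral so that the lower limit and the excluded cusp come out exactly as stated.
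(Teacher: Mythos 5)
Your proposal is correct in outline but takes a genuinely different route from the paper. You work with the completion: by Theorem \ref{thm_modvij} the harmonic parts cancel in the cocycle $V_{mn}(\tau)-(\mathrm{aut}_\gamma)^{-1}V_{mn}(\gamma\tau)$, and the error is the telescoped non-holomorphic Eichler integral, whose lower endpoint you correctly identify (the reflection $\tau\mapsto-\overline{\tau}$ replaces $\gamma=\sm{a}{b}{c}{d}$ by $\sm{a}{-b}{-c}{d}$, so the endpoint is $d/c=\tfrac12$ for $M_2$, resp.\ $1$ for $M_1$, not $-\tfrac12$). The paper never passes through the completion: for $n=1$ it manipulates $\mu$ directly via Lemma \ref{lem_mu}(5),(6), so the error terms first appear as Mordell integrals $\mathcal I_m,\mathcal J_m$, which are converted to Eichler integrals of $g_{a,b}$ using Theorem \ref{thm_z116} and, in the boundary cases $m\in\{1,3,5\}$, the purpose-built Lemma \ref{lem_Zlem}; for $n\neq 1$ it writes $V_{mn}=V_{m1}+\mathcal F_{mn}$ and shows $\mathcal F_{mn}$ is a weakly holomorphic weight-$1/2$ form (Lemma \ref{lem_modfmn}) that vanishes on $S_{mn}$ (Lemma \ref{lem_zerofmn}). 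Your route is more uniform — all admissible $(m,n)$ at once, no Mordell integrals, no Lemma \ref{lem_Zlem} — while the paper's route produces the explicit constants in the statement as outputs of the computation rather than inputs.

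That last point is where your sketch defers work that is more than bookkeeping. Theorem \ref{thm_modvij} pins down the shadow only up to an unspecified constant, so normalizing $V_{mn}^{-}$ with the factor $-i/c_m$ is an assumption, not a consequence; to obtain exactly the stated right-hand sides you must compute that constant (via Proposition \ref{prop_p1shadows} and the $\alpha_{mn}$ of (\ref{table_VM})), and you must also compute the weight-$3/2$ multiplier of $E_m\big(\tfrac{2u}{c_m^2}\big)$ under the conjugated parabolic matrices so that the telescoped integral carries no stray root of unity — precisely the content encoded in the paper's Table \ref{table_IJ}. Likewise, your part (iii) mechanism (exponents supported on a single progression) is correct in spirit but must actually be verified from the Lambert-series/theta-quotient structure; the paper does the equivalent computation with the elliptic transformations of Lemma \ref{lem_mu}. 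Finally, one phrase is off: the functional equation does not ``define'' $V_{mn}$ on $S_{mn}$ — the values at rationals come from the terminating series established in Section \ref{sec_quantumsets} (Theorem \ref{thm_QuantumSet}, Lemmas \ref{lem_g_2}, \ref{lem_eta}, \ref{lem_zerofmn}) — and both your argument and the paper's then extend the identities from $\H$ to $S_{mn}$ by continuity of the error integral. None of this invalidates your approach; it marks where the deferred computations constitute the actual substance of the proof.
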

One interesting feature of Theorem \ref{thm_quantum} is that it leads to simple, yet non-obvious, closed expressions for the Eichler integrals  of the odd eta-theta functions $E_m$ appearing on the right hand side of  (\ref{eqn_FEminvertthm1}).  Moreover, (\ref{eqn_FEminvertthm2}) leads to curious algebraic identities.   To describe these results, we define the truncated $q$-hypergeometric series for integers $h \in \mathbb Z, k \in \mathbb N$ $(\gcd(h,k)=1)$ by
\begin{align}\label{def_Fhk}F_{h,k}(z_1,z_2) := \sum_{n=0}^{k-1}  \frac{(-\zeta_{2k}^h;\zeta_{2k}^h)_n \zeta_{4k}^{n(n+1)h}}{(z_1; \zeta_{2k}^h)_{n+1}(z_2; \zeta_{2k}^h)_{n+1}}.\end{align}
The additional constants $d_m, H_m = H_m(h,k),$ and $K_m=K_m(h,k)$ appearing in Corollary \ref{cor_hypergeometric} below are defined in Section \ref{sec_cor}.
From Theorem \ref{thm_quantum}, we have the following corollary.
\begin{corollary}\label{cor_hypergeometric}
The Eichler integrals of the odd eta-theta functions $E_m$ satisfy the following identities. \medskip \\
\noindent \rm{(i)} Let $m\in\{1,2,3,5,6\}$, and $\frac{h}{k} \in S_{m1} \setminus \{\frac{-1}{\ell_m}\}$. Then we have that
\begin{align}\label{eqn_cor1}
   \frac{-i}{c_m}  \int_{\frac{1}{\ell_m}}^{i\infty}  \frac{E_m(\frac{2z}{c_m^2})}{\sqrt{-i\left(z+\tfrac{h}{k}\right)}}dz =
  &  i^{1+\ell_m}  \zeta_{a_m c_m k}^{2 d_m h} F_{h,k}(-i^{\ell_m-3} \zeta_{c_m k}^h,-i^{3-\ell_m} \zeta_{a_m k}^{d_m h})
\\ \notag &  \hspace{.15in}  - \zeta_{8}^{-5\ell_m} \zeta_{a_m c_m K_{\ell_m}}^{2 d_m H_{\ell_m}} \left( \tfrac{\ell_m h}{k}+1\right)^{-\frac12}
F_{H_{\ell_m},K_{\ell_m}}(-i^{\ell_m - 3 } \zeta_{c_m K_{\ell_m}}^{H_{\ell_m}},-i^{3-\ell_m} \zeta_{a_m K_{\ell_m}}^{d_m H_{\ell_m}}).
  \end{align}
\noindent Moreover, we have for $m\in\{1,2,5\}$ that
 \begin{align}\label{eqn_moreover1} F_{h,k}(-i^{ \ell_m - 3} \zeta_{c_m k}^h,-i^{3-\ell_m} \zeta_{a_m k}^{d_m h}) + F_{h,k}(i^{\ell_m-3} \zeta_{c_m k}^{h},i^{3-\ell_m} \zeta_{a_m k}^{d_m h}) = 0.
 \end{align}

\noindent \rm{(ii)} Let $\frac{h}{k} \in S_{41} \setminus \{-1\}$. Then we have that
 \begin{align}\label{eqn_cor2}
  \frac{-i}{24}&  \int_1^{i\infty}     \frac{E_4(z/288)}{\sqrt{-i\left(z+\tfrac{h}{k}\right)}}dz  =
 -\zeta_{288 k}^{11h}  F_{h,k}(\zeta_{24k}^h,\zeta_{24k}^{11h})  -\zeta_{288 k}^{35h} F_{h,k}( \zeta_{24k}^{5h},\zeta_{24k}^{7h}) \\ \notag
&\hspace{1.5in}+ \zeta_8^{-1}  \left(\tfrac{h}{k} +1\right)^{-\frac12}
  \left(\zeta_{288 K_1}^{11H_1}  F_{H_1,K_1}(\zeta_{24K_1}^{H_1},\zeta_{24K_1}^{11H_1})  +\zeta_{288 K_1}^{35H_1} F_{H_1,K_1}( \zeta_{24K_1}^{5H_1},\zeta_{24K_1}^{7H_1})\right).
  \end{align}
\noindent Moreover, we have that
\begin{align}\label{eqn_moreover2}
F_{h,k}(\zeta_{24k}^h,\zeta_{24k}^{11h}) + F_{h,k}(\zeta_{24k}^{5h},\zeta_{24k}^{7h})  + F_{h,k}(-\zeta_{24k}^h,-\zeta_{24k}^{11h}) + F_{h,k}(-\zeta_{24k}^{5h},-\zeta_{24k}^{7h}) = 0.
\end{align}
 
\end{corollary}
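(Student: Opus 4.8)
The plan is to derive the corollary directly from Theorem~\ref{thm_quantum} together with one additional ingredient: an explicit evaluation of the mock modular forms $V_{mn}$ at rational arguments as finite $q$-hypergeometric sums. The two main identities (\ref{eqn_cor1}) and (\ref{eqn_cor2}) will come from parts (i) and (ii) of Theorem~\ref{thm_quantum}, while the two ``moreover'' identities (\ref{eqn_moreover1}) and (\ref{eqn_moreover2}) will come from the exact period relation in part (iii). Throughout I unify the cases $\ell_m=2$ and $\ell_m=1$ via the single matrix $M_{\ell_m}=\sm{1}{0}{\ell_m}{1}$, so that parts (i) and (ii) read as a single functional equation with lower limit $1/\ell_m$; note that the automorphy factor $(\ell_m x+1)^{-1/2}$ evaluated at $x=h/k$ produces exactly the factor $(\ell_m h/k+1)^{-1/2}$ appearing in (\ref{eqn_cor1}).

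The key step is the rational evaluation lemma. Using the explicit $q$-hypergeometric (Appell--Lerch type) expansion of $V_{mn}$ recorded in Section~\ref{constructingV} and the Appendix, I would compute the radial limit $\tau\to h/k$ from within $\H$. As $\tau\to h/k$ the nome tends to a root of unity (the base $e(\tau/2)$ of the Pochhammer products tending to $\zeta_{2k}^h$), and the defining series collapses: the factor $(-\zeta_{2k}^h;\zeta_{2k}^h)_n$ together with the denominators $(z_i;\zeta_{2k}^h)_{n+1}$ forces the series to truncate after $k$ terms, leaving precisely the sum $F_{h,k}(z_1,z_2)$ of (\ref{def_Fhk}) for explicit root-of-unity arguments $z_1,z_2$ depending on $m$. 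Concretely I expect
\[
V_{m1}\!\left(\tfrac{h}{k}\right)=\varepsilon\,F_{h,k}\big({-}i^{\ell_m-3}\zeta_{c_mk}^{\,h},\,{-}i^{3-\ell_m}\zeta_{a_mk}^{\,d_mh}\big)
\]
for a root of unity $\varepsilon=\varepsilon(m,h,k)$, and an analogous evaluation at the shifted point $M_{\ell_m}\cdot\tfrac{h}{k}=H_{\ell_m}/K_{\ell_m}$, which supplies the second hypergeometric term. For $m=4$ the relevant $V_{41}$ splits as a sum of two such pieces (reflecting the two theta components of the associated even eta-theta function $e_n$), which is the source of the two-term and, after (iii), four-term shape of (\ref{eqn_cor2}) and (\ref{eqn_moreover2}).

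With the rational evaluation in hand, assembling the main identities is mechanical: set $x=h/k$ in part (i) (resp.\ part (ii)) of Theorem~\ref{thm_quantum}, observe that the right-hand integral is exactly the left-hand side of (\ref{eqn_cor1}) (resp.\ (\ref{eqn_cor2})), and substitute the finite-sum evaluations of $V_{m1}(h/k)$ and $V_{m1}(M_{\ell_m}h/k)$, simplifying the accumulated roots of unity into the stated prefactors $i^{1+\ell_m}\zeta_{a_mc_mk}^{2d_mh}$ and $\zeta_8^{-5\ell_m}\zeta_{a_mc_mK_{\ell_m}}^{2d_mH_{\ell_m}}$. For the ``moreover'' identities I would instead evaluate the \emph{exact} relation (\ref{eqn_FEminvertthm2}),
\[
V_{mn}(x)=\zeta_{a_m}^{\kappa_{mn}}V_{mn}(x+\kappa_{mn}b_m),
\]
at $x=h/k$. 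Since this relation carries no error term, substituting the finite sums on both sides yields a genuine algebraic identity; the shift $x\mapsto x+\kappa_{mn}b_m$ acts on the evaluation by negating the hypergeometric arguments, up to a compensating root of unity absorbed by $\zeta_{a_m}^{\kappa_{mn}}$, so that the relation rearranges into the vanishing sum $F_{h,k}(z_1,z_2)+F_{h,k}(-z_1,-z_2)=0$ of (\ref{eqn_moreover1}) and its four-term analogue in (\ref{eqn_moreover2}).

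The main obstacle is the rational evaluation lemma, and specifically the root-of-unity bookkeeping it entails. One must verify that the radial limit exists and truncates at exactly $n=k-1$, that the reduced form of $M_{\ell_m}\cdot(h/k)$ really is $H_{\ell_m}/K_{\ell_m}$ with the constants $H_m,K_m$ as defined in Section~\ref{sec_cor}, and that the numerous roots of unity ($\zeta_{2k},\zeta_{4k},\zeta_{c_mk},\zeta_{a_mk}$ and powers of $i$) together with the half-integral automorphy factor $(\ell_m x+1)^{-1/2}$ combine to give precisely the prefactors claimed rather than an extraneous sign or eighth root of unity. Verifying that the shift in part (iii) negates the arguments exactly, and confirming that the excluded point $h/k=-1/\ell_m$ (where the automorphy factor degenerates) is correctly removed from the quantum set, are the remaining careful checks.
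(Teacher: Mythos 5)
Your overall architecture matches the paper's proof exactly: identities (\ref{eqn_cor1}) and (\ref{eqn_cor2}) are obtained by evaluating parts (i) and (ii) of Theorem \ref{thm_quantum} (with $n=1$) at $x=h/k$, the ``moreover'' identities by evaluating the exact relation in part (iii), and both require a rational evaluation of $V_{m1}$ as a finite sum of $F_{h,k}$-type. The gap is in how you propose to prove that evaluation. The ``defining series'' of $V_{m1}$ recorded in Section \ref{constructingV} and the Appendix are Lambert-type/Appell--Lerch series (quotients by $e_n$, or specializations of $\mu$); they contain no Pochhammer factors and do not truncate at roots of unity, nor do they converge there in any obvious termwise fashion. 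The Pochhammer structure $(-\zeta_{2k}^h;\zeta_{2k}^h)_n\big/\big[(z_1;\zeta_{2k}^h)_{n+1}(z_2;\zeta_{2k}^h)_{n+1}\big]$ you describe belongs to the universal mock theta function $g_2$, and it enters only through Kang's Theorem \ref{kangthm} (applied in the paper as Lemma \ref{Vm1}), which is available precisely because $n=1$ puts $V_{m1}$ in the special form $\mu(2\alpha,\tau/2;\tau)$. This conversion is the first missing ingredient of your ``rational evaluation lemma.''

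More seriously, Kang's identity expresses $V_{m1}$ as a $g_2$-term \emph{plus} a second term, the modular quotient $\eta(\tau)^4/\big(\eta(\tau/2)^2\,\vartheta(u_\tau^{(m1)};\tau)\big)$. Truncation of the $g_2$-series and non-vanishing of its denominators at rationals (the paper's Lemma \ref{lem_g_2}) only handle the first term; for the clean evaluation $V_{m1}(h/k)=\varepsilon\, F_{h,k}(\cdots)$ one must also prove that the second term \emph{vanishes} at every $h/k$ in the quantum set. This is the paper's Lemma \ref{lem_eta}, and it is not root-of-unity bookkeeping: it is a separate analysis balancing the zeros of $(q;q)_\infty(-q^{1/2};q^{1/2})_\infty^2$ against the possible zeros of the denominator products, and it is exactly what forces some quantum sets to be cut down (e.g.\ $S_{31}=S'$ rather than the larger set $S'\cup S_{od}$ on which the $g_2$-series already truncates). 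Your proposal never addresses this complementary term, so the evaluation $V_{m1}(h/k)=\varepsilon\, F_{h,k}(\cdots)$ --- and with it the corollary --- does not follow from the truncation argument alone.
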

\begin{remark} The analogous result to (\ref{eqn_moreover1}) also holds for $m\in\{3,6\},$ however, the identity for these $m$ is trivial. \end{remark}
We illustrate Corollary \ref{cor_hypergeometric} in the following example.
\medskip \\  {\bf{Example.}}  Let $h/k=1/3 \in S_{11} \setminus \{\frac{-1}{2}\}$.  By Corollary \ref{cor_hypergeometric}, the Eichler integral of the eta-quotient $E_1$ appearing in (\ref{eqn_cor1}) may be evaluated exactly as
\begin{align*}\frac{-i}{8}   \int_{1/2}^{i\infty} \frac{E_1(z/32) dz}{\sqrt{-i(z+\tfrac13)}} &= \zeta^{-7}_{32} \sum_{n=0}^2 \frac{(-\zeta_6;\zeta_6)_n \zeta_{12}^{n(n+1)}}{(i \zeta_{24};\zeta_6)_{n+1}(-i \zeta_8;\zeta_6)_{n+1}} - \left(\tfrac35\right)^{\frac12} \zeta_{160}^{-37} \sum_{n=0}^4 \frac{(-\zeta_{10};\zeta_{10})_n \zeta_{20}^{n(n+1)}}{(i\zeta_{40};\zeta_{10})_{n+1}(-i\zeta_{40}^3;\zeta_{10})_{n+1}} \\ & \approx .05461 +.00825i.\end{align*}   Moreover, we have the following curious algebraic identity from (\ref{eqn_moreover1}):  \begin{align}\label{eqn_tranex}
 \sum_{n=0}^{2} \frac{(-\zeta_{6};\zeta_{6})_n \zeta_{12}^{n(n+1)}}{(i\zeta_{24};\zeta_{6})_{n+1} (-i\zeta_{8};\zeta_{6})_{n+1}}   + \sum_{n=0}^{2} \frac{(-\zeta_{6};\zeta_{6})_n \zeta_{12}^{n(n+1)}}{(-i\zeta_{24};\zeta_{6})_{n+1} (i\zeta_{8};\zeta_{6})_{n+1}}  = 0.\end{align}
 While (\ref{eqn_tranex}) may appear elementary, we point out that term by term, the two sums appearing are quite different. That is, let
 \begin{align*} a(n)  := \frac{(-\zeta_{6};\zeta_{6})_n \zeta_{12}^{n(n+1)}}{(i\zeta_{24};\zeta_{6})_{n+1} (-i\zeta_{8};\zeta_{6})_{n+1}}, \ \ \
 b(n) := \frac{(-\zeta_{6};\zeta_{6})_n \zeta_{12}^{n(n+1)}}{(-i\zeta_{24};\zeta_{6})_{n+1} (i\zeta_{8};\zeta_{6})_{n+1}}.
 \end{align*}  The following table gives the values of each summand appearing in (\ref{eqn_tranex});  other than the fact that the last summands satisfy $a(2) = -b(2)$, term-by-term cancellation in (\ref{eqn_tranex}) is not apparent.  Other examples which we numerically computed behaved similarly.
 $$\begin{array}{|c|c|c|} \hline n & a(n) & b(n)  \\ \hline
 0 & \approx 0.713123 -0.411722 i & \approx 0.384953  -0.222253 i \\ \hline
 1 & \approx -2.38616+1.37765 i & \approx 1.28808 -0.743673 i \\ \hline
 2 & \approx 1.22474 & \approx -1.22474 \\ \hline
 \end{array}$$

 Given that our quantum modular forms $V_{mn}$ satisfy the stronger property that their appropriate transformation properties hold on both a subset of $\mathbb Q$ and the upper half-plane $\mathbb H$, it is natural to ask if the functions $V_{mn}$ also extend into the lower half-plane $\mathbb H^-:=\{z\in \mathbb C \ | \ \text{Im}(z)<0\}$.  Indeed, in Section \ref{LOprelim}, we define for $m\in T$ the functions $\widetilde{E}_m(z)$ for $z\in \mathbb H^-$.  Upon making the change of variable $z=-2\tau/c_m^2$, where $\tau \in \mathbb H$ (and hence $z\in \mathbb H^-$), we show in Proposition \ref{prop_eichlere} that as $ \tau \to x \in S_{mn}\subseteq \mathbb Q$ from the upper half-plane, and hence as $z\to -2x/c_m^2\in \mathbb Q$ from the lower half-plane, the functions $\widetilde{E}_m\left(-{2x}/{c_m^2}\right)$ are quantum modular forms which transform exactly as our functions $V_{mn}(x)$ do in Theorem \ref{thm_quantum}, up to multiplication by a constant which can be explicitly determined.
\begin{proposition}  \label{prop_eichlere} For $m\in T$, the functions $\widetilde{E}_m$ are quantum modular forms of weight $1/2$.  In particular, for any $x\in S_{mn}$, up to multiplication by a constant, the functions $\widetilde{E}_m\left(-{2x}/{c_m^2}\right)$ satisfy the transformation laws given in Theorem \ref{thm_quantum} for the functions $V_{mn}(x)$.
\end{proposition}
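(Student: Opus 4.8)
The plan is to transfer quantum modularity from $V_{mn}$ (Theorem \ref{thm_quantum}) to $\widetilde{E}_m$ by recognizing $\widetilde{E}_m(-2\tau/c_m^2)$ as the holomorphic Eichler integral of the odd eta-theta function $E_m$, and then using that a cusp form's holomorphic and non-holomorphic Eichler integrals carry a common period. Concretely, writing $E_m$ as a weight-$3/2$ unary theta series $\sum_n \chi_m(n)\,n\,e(\lambda_m n^2 \tau)$ attached to an odd character $\chi_m$, the function defined in Section \ref{LOprelim} is the associated weight-$1/2$ series $\widetilde{E}_m(z) = \sum_n \chi_m(n)\, e(-\lambda_m n^2 z)$, which converges precisely for $z \in \mathbb{H}^-$ and satisfies $\widetilde{E}_m(-2\tau/c_m^2) = \sum_n \chi_m(n)\, e(2\lambda_m n^2 \tau / c_m^2)$ for $\tau \in \mathbb{H}$; this is the Eichler integral of $E_m(2\tau/c_m^2)$ normalized to match $V_{mn}$. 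On the other hand, since the shadow of $V_{mn}$ is a constant multiple of $E_m(2\tau/c_m^2)$ (Theorem \ref{thm_modvij}), the non-holomorphic completion piece $V_{mn}^- := \widehat{V}_{mn} - V_{mn}$ is, by the construction of Section \ref{sub:construction}, a constant multiple of the non-holomorphic Eichler integral $\int_{-\bar{\tau}}^{i\infty} \frac{E_m(2z/c_m^2)}{\sqrt{-i(z+\tau)}}\, dz$.

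The analytic heart of the argument, and the step I expect to be the main obstacle, is the lemma that for $x \in S_{mn}$ the radial limit $\lim_{t \to 0^+} V_{mn}^-(x+it)$ agrees, up to the same constant, with $\widetilde{E}_m(-2x/c_m^2)$; equivalently, the holomorphic and non-holomorphic Eichler integrals of $E_m$ share the same period cocycle under $G_{mn}$. In the spirit of Lawrence--Zagier and the quantum-modular-forms literature, this is proved using the cuspidality (rapid decay at $i\infty$) of the theta function $E_m$: cuspidality ensures convergence of the period integral, and a comparison of the asymptotic expansion of the non-holomorphic integral at the rational $x$ with the boundary value of the holomorphic series $\widetilde{E}_m$ approached from $\mathbb{H}^-$ shows the two agree to all orders. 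I would carry this out on the generators of $G_{mn}$, matching the explicit period integrals that appear in Theorem \ref{thm_quantum}: the cocycle for $M_2$ yields $-\tfrac{i}{c_m}\int_{1/2}^{i\infty} \frac{E_m(2u/c_m^2)}{\sqrt{-i(u+x)}}\,du$, that for $M_1$ (when $n=1$, $m \in \{2,4,6\}$) the corresponding integral from $1$, and that for the translation $x \mapsto x + \kappa_{mn} b_m$ the trivial cocycle $0$, reproducing parts (i)--(iii).

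With the shared period cocycle established, the transformation law follows formally. Because $\widehat{V}_{mn}$ is genuinely modular on $A_{mn}$ (Theorem \ref{thm_modvij}), writing $V_{mn} = \widehat{V}_{mn} - V_{mn}^-$ shows that the quantum error term of $V_{mn}$ under each generator of $G_{mn}$ is exactly the negative of the transformation discrepancy of the non-holomorphic Eichler integral $V_{mn}^-$. Since $\widetilde{E}_m(-2\tau/c_m^2)$ is the holomorphic Eichler integral of the same $E_m$, its transformation discrepancy equals this same period cocycle up to the overall multiplicative constant; hence $\widetilde{E}_m(-2x/c_m^2)$ satisfies precisely the transformation laws of Theorem \ref{thm_quantum} for $V_{mn}(x)$, up to a constant. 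As the period-integral error functions were already shown, in the proof of Theorem \ref{thm_quantum}, to extend continuously to the relevant subsets of $\mathbb{R}$, it follows that the $\widetilde{E}_m$ are quantum modular forms of weight $1/2$. The remaining work is bookkeeping: tracking the change of variable $z = -2\tau/c_m^2$ together with the full multiplier data -- the root of unity $\psi$, the factors $\varepsilon^{(m)}_{\gamma_{mn}}$ and $(-1)^{k+\ell+r+s}$, and the various $\zeta_N$ -- so that the constants on both sides match.
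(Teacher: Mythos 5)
Your proposal is correct, and its analytic core coincides with the paper's: both arguments reduce to the Lawrence--Zagier-type statement that, at a rational, the partial theta series $\widetilde{E}_m$ and the non-holomorphic Eichler integral of $E_m$ have the same asymptotic expansion, so that the former inherits the period cocycle of the latter. The paper simply cites \cite[Proposition 2.1]{BR} for this agreement, whereas you propose to reprove it by the cuspidality/asymptotic-matching argument---which is exactly the content of that proposition, so your ``analytic heart'' is a known lemma rather than an obstacle. Where you genuinely differ is in how the period cocycle is produced. The paper never invokes the completions: it sets $E_m^*(-\tau):=\int_{-\overline{\tau}}^{i\infty}E_m(u)(u+\tau)^{-1/2}\,du$, computes its transformation discrepancy directly from the weight-$3/2$ modularity of $E_m$ (with multiplier $\nu_m$ on a suitable group $\mathcal G_m$), obtaining the period integral $\int_{-d/c}^{i\infty}E_m(u)(u+\tau)^{-1/2}\,du$, and then matches this with the error terms of Theorem \ref{thm_quantum} after the change of variable $z=-2\tau/c_m^2$. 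You instead decompose $V_{mn}=\widehat{V}_{mn}-V_{mn}^-$, identify $V_{mn}^-$ with the non-holomorphic Eichler integral of $E_m\left(2z/c_m^2\right)$ via Theorem \ref{thm_z116} and Lemma \ref{E-g}, and read the cocycle off from the exact modularity of $\widehat{V}_{mn}$ (Theorem \ref{thm_modvij}) together with the explicit error terms of Theorem \ref{thm_quantum}. Both routes are valid: yours reuses the mock modular machinery already in place, at the cost of the compatibility checks you flag at the end (that $G_{mn}\subseteq A_{mn}$ and that the multipliers of Theorems \ref{thm_modvij} and \ref{thm_quantum} agree on the relevant generators), while the paper's is self-contained in $E_m$ and so extracts the ``up to an explicitly determinable constant'' conclusion with slightly less bookkeeping.
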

Series similar to the functions $\widetilde{E}_m$ defined in (\ref{table_etil}) which instead arise from ordinary integer weight cusp forms were studied originally by Eichler (and are also often referred to as ``Eichler integrals"), and were shown to play fundamental roles within the theory of integer weight modular forms.   In the present setting, the modular objects $E_m$ related to the series $\widetilde{E}_m$ are not of integral weight, and many aspects of Eichler's theory become complicated. Nevertheless, in their fundamental work \cite{LZ}, Lawrence and Zagier  successfully consider Eichler's theory in the half integer weight setting; moreover, their work led to some of the first examples of quantum modular forms.  The functions $\widetilde{E}_m$ may also be viewed as \emph{partial theta functions}, which as series are similar to ordinary modular theta functions, but which are not modular in general \cite{apartial}.  Related results on quantum modular forms similar to those given in Proposition \ref{prop_eichlere} may be found in \cite{BR, FKTY, Zagier} among other places;  we follow their methods to prove Proposition \ref{prop_eichlere}.

\section{Preliminaries for Theorem \ref{thm_modvij} and Theorem \ref{thm_quantum}}
In this section we review previous work of Lemke Oliver \cite{LO},  Zwegers \cite{zwegers}, and the third author   \cite{Kang}, and make some preparations for our proofs of Theorems \ref{thm_modvij} and \ref{thm_quantum}.

\subsection{Work of Lemke Oliver on eta-theta functions}\label{LOprelim}
 We begin with the Dedkind eta-function (\ref{eqn_etadef}), whose well-known weight $1/2$ modular transformation properties are summarized in the following Lemma.
\begin{lemma} \label{lem_etatransf}
 For all
$\gamma=\sm{a}{b}{c}{d} \in \textnormal{SL}_2(\mathbb Z)$ and $\tau \in \mathbb H$, we have that
\begin{equation}
\eta\left(\gamma\tau\right)  = \psi\left(\gamma\right)(c\tau + d)^{\frac{1}{2}} \eta(\tau), \label{etatrgen}
\end{equation}
where $\psi\left(\gamma\right)$ is a $24$th root of unity, which can be given explicitly in terms of Dedekind sums \cite{Rad}.  In particular, we have that
\begin{equation*}
\eta\left( \frac{-1}{\tau} \right)= \sqrt{-i \tau} \, \eta(\tau).
\end{equation*}
\end{lemma}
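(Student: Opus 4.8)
The plan is to reduce everything to the two standard generators of $\mathrm{SL}_2(\Z)$, namely $T=\sm{1}{1}{0}{1}$ and $S=\sm{0}{-1}{1}{0}$, prove (\ref{etatrgen}) for each, and then bootstrap to an arbitrary $\gamma$ by a cocycle/induction argument. The $T$-transformation is immediate from the product in (\ref{eqn_etadef}): since $q=e(\tau)$ is invariant under $\tau\mapsto\tau+1$ while the prefactor $q^{1/24}$ picks up a factor $e(1/24)=e^{\pi i/12}$, one gets $\eta(\tau+1)=e^{\pi i/12}\eta(\tau)$, which is of the asserted shape with $c=0$, $d=1$ and a $24$th root of unity as multiplier.

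The main obstacle is the $S$-transformation $\eta(-1/\tau)=\sqrt{-i\tau}\,\eta(\tau)$. I would work with the logarithmic derivative, using the classical identity $\frac{\eta'(\tau)}{\eta(\tau)}=\frac{\pi i}{12}E_2(\tau)$, where $E_2(\tau)=1-24\sum_{n\geq 1}\sigma_1(n)q^n$ is the weight-$2$ quasimodular Eisenstein series (this drops out of differentiating $\log\eta=\tfrac{2\pi i\tau}{24}+\sum_n\log(1-q^n)$ and resumming $\sum_n\frac{nq^n}{1-q^n}=\sum_m\sigma_1(m)q^m$). The quasimodular transformation $E_2(-1/\tau)=\tau^2 E_2(\tau)+\frac{6\tau}{\pi i}$, itself provable by Poisson summation or by analytic continuation of the Lipschitz formula, then gives that both $\frac{d}{d\tau}\log\eta(-1/\tau)$ and $\frac{d}{d\tau}\log\!\big(\sqrt{-i\tau}\,\eta(\tau)\big)$ equal $\frac{\pi i}{12}E_2(\tau)+\frac{1}{2\tau}$. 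Hence the ratio $\eta(-1/\tau)/(\sqrt{-i\tau}\,\eta(\tau))$ is constant in $\tau$. The genuinely delicate point is to pin down that constant together with the branch of the square root; I would do this by specializing at the fixed point $\tau=i$, where $-1/i=i$ and $-i\tau=1$, so that with the principal branch $\sqrt{-i\tau}$ (positive on the imaginary axis) both sides equal $\eta(i)$ and the constant is forced to be $1$.

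With the two generator laws in hand, I would define, for the principal branch of $(c\tau+d)^{1/2}$, the quantity $\psi(\gamma):=\eta(\gamma\tau)\,(c\tau+d)^{-1/2}/\eta(\tau)$ and argue it is independent of $\tau$. Writing an arbitrary $\gamma$ as a word in $S$ and $T$ and composing the two known transformations produces, by induction on word length, the automorphy factor $(c\tau+d)^{1/2}$ times a root of unity; the ratio above is holomorphic and non-vanishing on $\H$, and its constancy is read off from the $q$-expansion as $\mathrm{Im}\,\tau\to\infty$. Each occurrence of $T$ contributes the $24$th root $e(1/24)$, while each occurrence of $S$ contributes the factor $\sqrt{-i}=e(-1/8)$ together with branch-of-square-root discrepancies, all eighth roots of unity. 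Since $\mathrm{lcm}(24,8)=24$, every such product lies in $\mu_{24}$, giving (\ref{etatrgen}) with $\psi(\gamma)$ a $24$th root of unity.

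Finally, for the explicit description of $\psi(\gamma)$, rather than unwinding the induction I would invoke Rademacher's closed formula \cite{Rad}: for $c>0$ one has $\psi(\gamma)=e\!\left(\frac{a+d}{24c}-\frac12 s(d,c)-\frac18\right)$, where $s(d,c)$ is the Dedekind sum (with the analogous adjustment for $c\le 0$); this is consistent with the generator values computed above and records the content cited in the statement. The step I expect to require the most care is fixing the constant and the square-root branch in the $S$-transformation, since this is precisely where the factor $\sqrt{-i}$ enters and where sign and branch errors most easily propagate into the general multiplier.
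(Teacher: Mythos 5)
The paper gives no proof of this lemma at all: it is stated as a summary of classical facts, with the multiplier delegated to the citation [Rad], so your argument is necessarily a different route from the paper's (which is pure citation). Your route is the standard proof and it is correct: the $T$-law $\eta(\tau+1)=e(1/24)\,\eta(\tau)$ read off from the product in (\ref{eqn_etadef}); the $S$-law via $\eta'/\eta=\tfrac{\pi i}{12}E_2$ and the quasimodular transformation of the weight-$2$ Eisenstein series (note that your $E_2$ clashes with the paper's notation for the odd eta-theta function $E_2$, so a different symbol would be wise in situ), with the constant of integration pinned down at the fixed point $\tau=i$ --- as you say, the one genuinely delicate step; then generation of $\textnormal{SL}_2(\Z)$ by $S,T$ and induction on word length, where every branch discrepancy is a sign and every generator contributes an $8$th or $24$th root of unity, so the multiplier lies among the $24$th roots of unity because $8\mid 24$; and finally Rademacher's closed formula, quoted rather than derived, which matches the paper's own level of detail for that clause of the statement. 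Your formula $\psi(\gamma)=e\bigl(\tfrac{a+d}{24c}-\tfrac12 s(d,c)-\tfrac18\bigr)$ for $c>0$ is indeed the correct normalization for the principal branch of $(c\tau+d)^{1/2}$.

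Two caveats, neither fatal. First, the clause that constancy of $\eta(\gamma\tau)(c\tau+d)^{-1/2}/\eta(\tau)$ is ``read off from the $q$-expansion as $\mathrm{Im}\,\tau\to\infty$'' is not a valid argument on its own --- holomorphy, nonvanishing, and a limit at $i\infty$ do not force constancy --- but it is also unnecessary, since your induction already produces $\eta(\gamma\tau)=\zeta_\gamma\,(c\tau+d)^{1/2}\eta(\tau)$ with $\zeta_\gamma$ an explicit constant; alternatively, that $\psi(\gamma)^{24}=1$ follows in one line by raising (\ref{etatrgen}) to the $24$th power and comparing with the weight-$12$ modularity of $\Delta=\eta^{24}$. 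Second, in a fully self-contained account the quasimodular transformation of the weight-$2$ Eisenstein series would itself require proof; you only name the tools (Poisson summation, the Lipschitz formula), which is acceptable at sketch level, but that is where essentially all of the analytic content of the $S$-transformation is hiding, and its difficulty is comparable to the identity being proved.
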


In recent work, Lemke Oliver \cite{LO} proves that there are only eighteen weight $1/2$ eta-quotients that are also theta functions  or linear combinations of theta functions (with even character)  including twists by certain  principal characters.  Of these, thirteen are unique up to a change of variable $\tau \mapsto k\tau$; we list these as\footnote{Our ordering here differs from Lemke Oliver's.}

\begin{align}\label{e}
\begin{array}{lclclcl}
\ds e_1(\tau) &=& \ds \frac{\eta(\tau)^2}{\eta(2\tau)}  &=& \ds \sum_{n\in\Z} \left(1 - 2\leg{n}{2}^2  \right)q^{n^2}  \  = \ds \sum_{n\in\Z} (-1)^n q^{n^2},   \\
\ds e_2(\tau) &=& \ds \frac{\eta(2\tau)^5}{\eta(\tau)^2 \eta(4\tau)^2 }  &= &\ds \sum_{n\in\Z} q^{n^2},   \\
\ds e_3(\tau) &=&  \eta(24\tau)  &=& \ds \sum_{n\geq 1} \leg{12}{n} q^{n^2},\\
\ds e_4(\tau) &= & \ds\frac{\eta(48\tau) \eta(72\tau)^2 }{\eta(24\tau)\eta(144\tau)} & =  &\ds\sum_{n\geq 1} \leg{n}{6}^2 q^{n^2},  \\
\ds e_5(\tau) &= & \ds\frac{\eta(8\tau)\eta(32\tau) }{\eta(16\tau)} & = & \ds\sum_{n\geq 1} \leg{2}{n} q^{n^2},   \\
\ds e_6(\tau) &= &\ds \frac{\eta(16\tau)^ 2}{\eta(8\tau)} & = &\ds \sum_{n\geq 1} \leg{n}{2}^2 q^{n^2},   \\
\ds e_{7}(\tau) &=  &\ds\frac{\eta(3\tau) \eta(18\tau)^2 }{\eta(6\tau) \eta(9\tau)} & = &\ds \sum_{n\geq 1} \left(2\leg{n}{6}^2 - \leg{n}{3}^2 \right) q^{n^2},   \\
\ds e_8(\tau) &= &\ds \frac{\eta(6\tau)^2 \eta(9\tau) \eta(36\tau)}{\eta(3\tau)\eta(12\tau)\eta(18\tau)}  &=  &\ds\sum_{n\geq 1} \leg{n}{3}^2 q^{n^2},  \\
\ds e_9(\tau) &=&  \ds\frac{\eta(48\tau)^3}{\eta(24\tau) \eta(96\tau)}&  =  &\ds\sum_{n\geq 1} \leg{24}{n} q^{n^2},  \\
\ds e_{10}(\tau) &= & \ds\frac{\eta(24\tau) \eta(96\tau)\eta(144\tau)^5  }{\eta(48\tau)^2 \eta(72\tau)^2\eta(288\tau)^2} & = & \ds\sum_{n\geq 1} \leg{18}{n} q^{n^2}, \\
\ds e_{11}(\tau) &= & \ds\frac{\eta(\tau)\eta(4\tau) \eta(6\tau)^2}{\eta(2\tau)\eta(3\tau)\eta(12\tau) } & =&  \ds\sum_{n\in\Z} \left(1 - \frac{3}{2}\leg{n}{3}^2 \right) q^{n^2},  \\
\ds e_{12}(\tau) &=&  \ds\frac{\eta(2\tau)^2 \eta(3\tau)}{\eta(\tau)\eta(6\tau)}  &=  &\ds\sum_{n\in\Z} \left(1 - 2\leg{n}{2}^2 - \frac{3}{2}\leg{n}{3}^2 + 3\leg{n}{6}^2 \right) q^{n^2},   \\
\ds e_{13}(\tau) &= &\ds \frac{\eta(8\tau)^2 \eta(48\tau)}{\eta(16\tau) \eta(24\tau)} &= &\ds \sum_{n\geq 1} \left(3\leg{n}{6}^2 - 2\leg{n}{2}^2 \right) q^{n^2}.
\end{array}
\end{align}

Lemke Oliver  establishes a similar list for eta-quotients of weight $3/2$  (with odd character)  that are  theta functions or linear combinations of theta functions.  He finds the following six functions:

\begin{equation}\label{E}
\begin{tabular}{llcllllcll}
$\ds E_1(\tau)$&=&$\ds \eta(8\tau)^3$ &=&$\ds \sum_{n\geq 1}\leg{-4}{n}nq^{n^2} $,
&$\ds E_4(\tau)$&=&$\ds \frac{\eta(48\tau)^{13}}{\eta(24\tau)^5\eta(96\tau)^5}$&=&$\ds \sum_{n\geq 1}\leg{-6}{n} nq^{n^2} $,  \\
$\ds E_2(\tau)$&=&$\ds \frac{\eta(16\tau)^{9}}{\eta(8\tau)^3\eta(32\tau)^3}$&=&$\ds \sum_{n\geq 1}\leg{-2}{n} nq^{n^2} $,
&$\ds E_5(\tau)$&=&$\ds \frac{\eta(24\tau)^5}{\eta(48\tau)^2}$ &=& $\ds \sum_{n\geq 1}\leg{n}{12} nq^{n^2} $,  \\
$\ds E_3(\tau)$&=&$\ds \frac{\eta(3\tau)^2\eta(12\tau)^2}{\eta(6\tau)} $&=& $\ds \sum_{n\geq 1}\leg{n}{3} nq^{n^2} $,
&$\ds E_6(\tau)$&=&$\ds \frac{\eta(6\tau)^5}{\eta(3\tau)^2}$&=&$\ds  \sum_{n\geq 1}\left(2\leg{n}{12}-\leg{n}{3}\right)nq^{n^2} $.
\end{tabular}
\end{equation}

We also define the following functions for $z\in \mathbb H^-$,
\begin{align} \label{table_etil}
\begin{tabular}{ll}
$\displaystyle{\widetilde{E}_1(z) =\sum_{n\geq 1}\leg{-4}{n} e^{-2\pi i z n^2}}$,
&$\displaystyle{\widetilde{E}_4(z) = \sum_{n\geq 1}\leg{-6}{n}  e^{-2\pi i z n^2}}$,  \\
$\displaystyle{\widetilde{E}_2(z) = \sum_{n\geq 1}\leg{-2}{n}  e^{-2\pi i z n^2}}$,
&$\displaystyle{\widetilde{E}_5(z) = \sum_{n\geq 1}\leg{n}{12}  e^{-2\pi i z n^2}}$,  \\
$\displaystyle{\widetilde{E}_3(z) = \sum_{n\geq 1}\leg{n}{3}  e^{-2\pi i z n^2}}$,
&$\displaystyle{\widetilde{E}_6(z) =\sum_{n\geq 1}\left(2\leg{n}{12}-\leg{n}{3}\right) e^{-2\pi i z n^2}}$.
\end{tabular}
\end{align}
Although these functions are not modular forms, as series, their relationship to the modular eta-theta functions $E_m$ is apparent.  As discussed in Section \ref{intro}, these functions may be viewed as formal  Eichler integrals  of the modular eta-theta functions $E_m$, or, as  partial theta functions.  Connections between these types of functions and mock modular and quantum modular forms have been explored in a number of works, including \cite{BR, FKTY, FOR, Hikami, ZagierV, Zagier}.

\subsection{Work of Zwegers on mock theta functions related to unary theta functions}

Zwegers \cite{zwegers} provides a mechanism for constructing mock theta functions with shadow related to a given unary theta function of weight $3/2$.   These mock theta functions (which we discuss further in this context in Section  \ref{sub:construction}) feature the weight 1/2 theta functions
\[
\vartheta(v;\tau) := \sum_{n\in\Z}e^{2\pi i(n+\frac{1}{2})(v+\frac{1}{2})}q^{\frac{1}{2}(n+\frac{1}{2})^2}.
\]  It is well-known that these theta functions may be written as  
\begin{equation}\label{eq:theta}
\vartheta(v;\tau) = -iq^{\frac{1}{8}}e^{-\pi i v}\prod_{n\geq 1}(1-q^n)(1-e^{2\pi i v}q^{n-1})(1-e^{-2\pi i v}q^n).
\end{equation}
We note that $\vartheta(v;\tau)$ also satisfies the explicit modularity properties described in the following lemma.

\begin{lemma}[\cite{Rad}, (80.31) and (80.8)] \label{lem_thetatransf}
 For $\lambda,\mu \in \mathbb Z$,
$\gamma=\sm{a}{b}{c}{d} \in \textnormal{SL}_2(\mathbb Z),$ $z\in \mathbb C,$ and $\tau \in   \mathbb H$, we have that
\begin{align}
 \vartheta(z+\lambda \tau+\mu;\tau)&= (-1)^{\lambda+\mu}q^{-\frac{\lambda^2}{2}}e^{-2\pi i\lambda
z}\vartheta(z;\tau),\label{tt1}\\
\vartheta\left(\frac{z}{c\tau+d};
\gamma\tau\right)&= \psi^3\left(\gamma\right) (c\tau+d)^{\frac12}e^{\frac{\pi icz^2}{c \tau+d}}\vartheta(z;
\tau)\label{tt2}.
\end{align}
  In particular, we have that
\begin{equation*}
\vartheta\left(\frac{z}{\tau}; -\frac{1}{\tau} \right)= - i \sqrt{-i \tau}  e^{\frac{\pi i z^2}{\tau}}  \vartheta\left(z; \tau \right).
\end{equation*}
\end{lemma}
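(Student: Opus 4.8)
The transformation laws recorded here are classical, the function $\vartheta(v;\tau)$ being (up to normalization) the odd Jacobi theta function; I outline the route by which I would verify them directly from the series definition. The plan is to treat the two formulas separately. Equation (\ref{tt1}) is the quasi-periodicity in the elliptic variable and follows by a reindexing of the defining series. Equation (\ref{tt2}) is the modular transformation, which I would reduce to the two generators $S=\sm{0}{-1}{1}{0}$ and $T=\sm{1}{1}{0}{1}$ of $\textnormal{SL}_2(\Z)$, propagate to a general $\gamma$ up to an unknown eighth root of unity, and only at the end pin that root of unity down to be $\psi^3(\gamma)$.

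For (\ref{tt1}) I would first substitute $v\mapsto z+\mu$: since $e^{2\pi i(n+\frac12)\mu}=(-1)^\mu$ for every $n\in\Z$, this contributes the factor $(-1)^\mu$. For the shift by $\lambda\tau$ I would write $e^{2\pi i(n+\frac12)\lambda\tau}=q^{\lambda(n+\frac12)}$, complete the square via $\frac12(n+\frac12)^2+\lambda(n+\frac12)=\frac12(n+\lambda+\frac12)^2-\frac{\lambda^2}{2}$, and reindex $n\mapsto n-\lambda$; the leftover linear phase produces $(-1)^\lambda e^{-2\pi i\lambda z}$, giving (\ref{tt1}). The $T$-law is equally direct: replacing $\tau$ by $\tau+1$ multiplies the $n$th term by $e^{\pi i(n+\frac12)^2}=e^{\pi i/4}$, since $e^{\pi i n(n+1)}=1$, so $\vartheta(z;\tau+1)=e^{\pi i/4}\vartheta(z;\tau)$, consistent with $\psi^3(T)=e^{\pi i/4}$.

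The main obstacle is the $S$-transformation $\vartheta(z/\tau;-1/\tau)$, which I would obtain by Poisson summation. Writing the series as $\sum_{n\in\Z}f(n)$ with $f$ a Gaussian in $n$ twisted by a linear exponential, I would evaluate the Fourier transform through $\int_{\R}e^{-\pi a x^2+2\pi i bx}\,dx=a^{-\frac12}e^{-\pi b^2/a}$, valid for $\textnormal{Re}(a)>0$. Here the quadratic coefficient is $a=i/\tau$, whose real part is positive precisely when $\tau\in\H$; this both justifies the computation and fixes the branch $a^{-\frac12}=\sqrt{-i\tau}$. Completing the square in the linear term and reindexing then yields the clean prefactor $-i\sqrt{-i\tau}\,e^{\pi i z^2/\tau}$ displayed in the lemma. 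Tracking the branch of the square root so that it is exactly $\sqrt{-i\tau}$, and collecting the various eighth roots of unity, is the delicate part of this step.

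Finally, for a general $\gamma=\sm{a}{b}{c}{d}$ I would combine the $S$- and $T$-laws together with (\ref{tt1}) to conclude that $\vartheta\big(\frac{z}{c\tau+d};\gamma\tau\big)=\mu(\gamma)(c\tau+d)^{\frac12}e^{\pi i cz^2/(c\tau+d)}\vartheta(z;\tau)$ for some eighth root of unity $\mu(\gamma)$ independent of $z$. To identify $\mu(\gamma)=\psi^3(\gamma)$ I would differentiate this identity in $z$ at $z=0$: because $\vartheta$ is odd in the elliptic variable we have $\vartheta(0;\tau)=0$, while Jacobi's derivative formula gives $\frac{\partial}{\partial z}\vartheta(z;\tau)\big|_{z=0}=C\,\eta(\tau)^3$ for a nonzero constant $C$. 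The differentiated relation becomes $\eta(\gamma\tau)^3=\mu(\gamma)(c\tau+d)^{\frac32}\eta(\tau)^3$, and comparison with the cube of Lemma \ref{lem_etatransf} forces $\mu(\gamma)=\psi(\gamma)^3$. Specializing $c=1,\,d=0$ then recovers the stated special case $\vartheta(z/\tau;-1/\tau)=-i\sqrt{-i\tau}\,e^{\pi i z^2/\tau}\vartheta(z;\tau)$.
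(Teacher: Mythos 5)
Your outline is correct, but note that the paper does not prove this lemma at all: the citation in its header, \cite{Rad} (80.31) and (80.8), is the entirety of the paper's justification, so any comparison is between your reconstruction and the classical literature rather than an argument in the paper itself. Your route is the standard one and each step checks out: the reindexing for (\ref{tt1}) (the factor $(-1)^{\mu}$ from $e^{2\pi i (n+\frac12)\mu}$, and completing the square to get $(-1)^{\lambda}q^{-\lambda^2/2}e^{-2\pi i \lambda z}$) is exactly right; the $T$-computation $e^{\pi i (n+\frac12)^2} = e^{\pi i/4} = \psi^3(T)$ is correct; Poisson summation with quadratic coefficient $a = i/\tau$, $\mathrm{Re}(a) > 0$, $a^{-1/2} = \sqrt{-i\tau}$ does produce precisely the prefactor $-i\sqrt{-i\tau}\,e^{\pi i z^2/\tau}$; and identifying the multiplier by differentiating at $z=0$ is legitimate, since $\vartheta(0;\tau)=0$ by oddness and $\vartheta'(0;\tau) = -2\pi\eta(\tau)^3$ (a fact the paper itself invokes in Section 4), whence comparison with the cube of Lemma \ref{lem_etatransf} forces the multiplier to be $\psi(\gamma)^3$. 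Two steps in your sketch deserve more care than you give them, though neither is a genuine gap. First, the claim that words in $S$ and $T$ propagate to a transformation of exactly the shape in (\ref{tt2}), with a constant multiplier independent of $z$ and $\tau$, needs an induction in which the Gaussian exponentials and automorphy factors are checked to compose via the cocycle identity for $c\tau+d$, and one must also account for $-I = S^2$ (where oddness of $\vartheta$ matches $\psi^3(-I)(-1)^{1/2} = -1$). Second, Jacobi's derivative formula is itself a nontrivial classical identity, so your argument is self-contained only modulo that input; alternatively, the multiplier can be tracked explicitly through the generator computations you already did, at the cost of bookkeeping. What your approach buys, relative to the paper's outsourcing to Rademacher, is a proof whose ingredients (Poisson summation, the eta transformation of Lemma \ref{lem_etatransf}, and the derivative formula) are all either elementary or already used elsewhere in the paper.
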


Now for $\tau\in\H$ and $u,v\in \C\backslash (\Z\tau + \Z)$,   Zwegers defines
\begin{equation}\label{series}
\mu(u,v;\tau) := \frac{e^{\pi i u}}{\vartheta(v;\tau)}\sum_{n\in\Z}\frac{(-1)^n e^{2\pi i n v}q^{\frac{n(n+1)}{2}}}{1 - e^{2\pi i u}q^{n}}.
\end{equation}
Zwegers also defines for $u\in\C$ and $\tau\in\mathbb{H}$ the Mordell integral $h$ by
\begin{equation}\label{Mordell}
h(u)=h(u;\tau):=\int_{\R}\frac{e^{\pi i\tau x^2 - 2\pi ux}}{\cosh{\pi x}}dx.
\end{equation}
We will make use of the following properties of $\mu$.
\begin{lemma}[Zwegers, Prop. 1.4 and 1.5 of \cite{zwegers}]  \label{lem_mu} Let $\mu(u,v):=\mu(u,v;\tau)$ and $h(u;\tau)$ be defined as in (\ref{series}) and (\ref{Mordell}). Then we have
\begin{enumerate}
\item $\mu(u+1,v)=-\mu(u,v)$,
\item $\mu(u,v+1)=-\mu(u,v)$,
\item $\mu(-u,-v)=\mu(u,v)$,
\item $\mu(u+z,v+z)-\mu(u,v)=\frac{1}{2\pi i}\frac{\vartheta '(0)\vartheta(u+v+z)\vartheta(z)}{\vartheta(u)\vartheta(v)\vartheta(u+z)\vartheta(v+z)}$, for $u,v,u+z,v+z\notin \Z\tau+\Z$,
\end{enumerate}
and the modular transformation properties,
\noindent
\begin{enumerate}
\item[(5)] $\mu(u,v;\tau+1)=e^{-\frac{\pi i}{4}}\mu(u,v;\tau)$,
\item[(6)] $\frac{1}{\sqrt{-i\tau}}e^{\pi i(u-v)^2/\tau}\mu\left(\frac{u}{\tau},\frac{v}{\tau};-\frac{1}{\tau}\right)+\mu(u,v;\tau)=\frac{1}{2i}h(u-v;\tau)$.
\end{enumerate}
\end{lemma}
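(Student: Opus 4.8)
The plan is to derive all six identities directly from the series (\ref{series}), the theta transformations of Lemma \ref{lem_thetatransf}, and (for the last one) the Gaussian identities satisfied by the Mordell integral (\ref{Mordell}); the real work lies entirely in the modular relation (6).

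Identities (1)--(3) are bookkeeping in the defining sum. Sending $u\mapsto u+1$ fixes every $e^{2\pi i u}$ but multiplies the prefactor $e^{\pi i u}$ by $-1$, giving (1); sending $v\mapsto v+1$ fixes the summand (as $n\in\Z$) while $\vartheta(v+1;\tau)=-\vartheta(v;\tau)$ by (\ref{tt1}), giving (2). For (3) I would re-index the sum by $n\mapsto -n$ and rewrite the denominator using $1-e^{-2\pi i u}q^{-n}=-e^{-2\pi i u}q^{-n}\bigl(1-e^{2\pi i u}q^{n}\bigr)$; after collecting the powers of $q$ this shows $\sum_n\frac{(-1)^ne^{-2\pi i nv}q^{n(n+1)/2}}{1-e^{-2\pi i u}q^n}=-e^{2\pi i u}\sum_n\frac{(-1)^ne^{2\pi i nv}q^{n(n+1)/2}}{1-e^{2\pi i u}q^n}$, and together with $\vartheta(-v;\tau)=-\vartheta(v;\tau)$ and the prefactor $e^{-\pi i u}$ the extra factors cancel to return $\mu(u,v)$.

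For the elliptic transformation (4) I would argue by Liouville in the variable $z$, with $u,v$ fixed. Put $P(z):=\mu(u+z,v+z)-\mu(u,v)$ and let $Q(z)$ denote the right-hand side. Both are elliptic in $z$ with periods $1$ and $\tau$: for $P$, period $1$ follows from (1)--(2) and period $\tau$ from the auxiliary identity $\mu(a+\tau,b+\tau)=\mu(a,b)$, itself a one-line shift of the summation index in (\ref{series}) combined with (\ref{tt1}); for $Q$ both periods follow from (\ref{tt1}), since the quasi-period exponentials cancel between numerator and denominator. Each of $P,Q$ has at worst simple poles at $z\equiv -u$ and $z\equiv -v\pmod{\Z\tau+\Z}$. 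At $z=-u$ the pole of $P$ comes from the $n=0$ term $\bigl(1-e^{2\pi i(u+z)}\bigr)^{-1}$ and has residue $-\bigl(2\pi i\,\vartheta(v-u)\bigr)^{-1}$, while the pole of $Q$ comes from $\vartheta(u+z)$ and gives the same residue after using $\vartheta(-u)=-\vartheta(u)$. Hence $P-Q$ is regular at $z=-u$; since $P-Q$ is elliptic its residues sum to zero over a fundamental domain, so it is automatically regular at $z=-v$ as well. Thus $P-Q$ is a holomorphic elliptic function, hence constant, and it vanishes because $P(0)=Q(0)=0$ (as $\vartheta(0)=0$). Identity (5) is then immediate: under $\tau\mapsto\tau+1$ each power $q^{n(n+1)/2}$ is unchanged since $n(n+1)$ is even, so the sum and the factor $e^{\pi i u}$ are invariant, whereas (\ref{tt2}) with $\gamma=\sm{1}{1}{0}{1}$ gives $\vartheta(v;\tau+1)=\psi^3(\gamma)\vartheta(v;\tau)=e^{\pi i/4}\vartheta(v;\tau)$, and dividing produces the factor $e^{-\pi i/4}$.

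\textbf{The main obstacle is the $S$-transformation (6).} My plan is to fix $v,\tau$, set $D(u):=\frac{1}{\sqrt{-i\tau}}e^{\pi i(u-v)^2/\tau}\mu\bigl(\tfrac{u}{\tau},\tfrac{v}{\tau};-\tfrac1\tau\bigr)+\mu(u,v;\tau)-\frac{1}{2i}h(u-v;\tau)$, and prove $D\equiv 0$ by a Liouville argument in $u$. Both $\mu$-terms have simple poles exactly on $\Z\tau+\Z$ (note $\tfrac u\tau\in\Z(-\tfrac1\tau)+\Z$ precisely when $u\in\Z\tau+\Z$), whereas $h(u-v)$ is entire, so the first step is a residue computation at $u=0$ showing that these poles cancel and $D$ is entire. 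The harder step is to show $D$ is elliptic. Under $u\mapsto u+\tau$ the inner shift $\tfrac u\tau\mapsto\tfrac u\tau+1$ is handled by (1), while under $u\mapsto u+1$ the inner shift $\tfrac u\tau\mapsto\tfrac u\tau-(-\tfrac1\tau)$ is a period-$(-\tfrac1\tau)$ shift of $\mu(\cdot\,;-\tfrac1\tau)$; in each case the (quasi-)transformation of one $\mu$-factor together with the Gaussian prefactor leaves an inhomogeneous remainder, and these remainders are matched exactly by the two functional equations of the Mordell integral: the period-$1$ relation $h(w)+h(w+1)=\frac{2}{\sqrt{-i\tau}}e^{\pi i(w+1/2)^2/\tau}$ (obtained from $\frac{1+e^{-2\pi x}}{\cosh\pi x}=2e^{-\pi x}$ and a Gaussian integral) and a companion period-$\tau$ relation (obtained by shifting the contour in (\ref{Mordell})). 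Granting these cancellations, $D$ is holomorphic and elliptic, hence constant, and the constant is pinned to $0$ by a direct limiting evaluation. I expect the bulk of the effort to be precisely this bookkeeping --- tracking the Gaussian factors, the multiplier $\psi$, and the inhomogeneous Mordell terms so that everything cancels --- which is the delicate heart of Zwegers's construction.
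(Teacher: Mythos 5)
First, a point of context: the paper does not prove this lemma at all --- it is imported verbatim from Zwegers's thesis (Prop.\ 1.4 and 1.5 of \cite{zwegers}), so your proposal can only be measured against Zwegers's original arguments. Against that standard, your parts (1)--(3) and (5) are correct and routine, and your part (4) is correct and is essentially Zwegers's own argument: the auxiliary identity $\mu(a+\tau,b+\tau;\tau)=\mu(a,b;\tau)$ does follow from re-indexing the sum together with (\ref{tt1}); both sides are elliptic in $z$; the residues at $z\equiv -u$ agree (your value $-\bigl(2\pi i\,\vartheta(v-u)\bigr)^{-1}$ checks out on both sides); the residue theorem for elliptic functions then kills the only other possible (simple) pole at $z\equiv -v$; and the difference, being entire elliptic and vanishing at $z=0$, is identically zero.

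The genuine gap is in (6), exactly at the concluding step. The function $D(u)$ is \emph{not} elliptic, and cannot be made so: under $u\mapsto u+1$ all three terms change sign (by (1) for the outer $\mu$, by the lattice shift of the inner $\mu$, and by your own period-one relation for $h$), so at best $D(u+1)=-D(u)$; and under $u\mapsto u+\tau$ each term acquires the common multiplier $-e^{\pi i\tau+2\pi i(u-v)}$ (compare Lemma \ref{lem_muhat}(1) with $k=1$, $l=m=n=0$; the Gaussian prefactor converts the sign flip of the inner $\mu$ into this same factor). Thus, even granting all the cancellations of the inhomogeneous Mordell terms, $D$ is an entire function carrying the multiplier system of $1/\vartheta(u-v;\tau)$, not a doubly periodic function, and ``holomorphic $+$ elliptic $\Rightarrow$ constant'' is simply not available. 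The proof must be finished differently, and this is what Zwegers actually does: an entire $G$ with $G(u+1)=-G(u)$ and $G(u+\tau)=-e^{\pi i\tau+2\pi i(u-v)}G(u)$ has a Fourier expansion $\sum_{k\in\Z}c_k e^{\pi i(2k+1)u}$ whose coefficients satisfy $c_k=-e^{-2\pi i k\tau-2\pi iv}c_{k-1}$, hence grow like $e^{\pi k^2\,\mathrm{Im}\,\tau}$ in both directions unless they all vanish; so $G\equiv 0$, i.e.\ $h$ is the \emph{unique} entire solution of the two functional equations, and $D\equiv 0$ with no constant left to pin down. Note that the direction of the multiplier is essential, not bookkeeping: had it come out as $-e^{-\pi i\tau-2\pi i(u-v)}$ (the multiplier of $\vartheta(u-v;\tau)$ itself, cf.\ (\ref{tt1})), nonzero entire solutions would exist and the whole uniqueness argument would collapse. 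Zwegers also organizes the setup more efficiently than your fixed-$v$ Liouville frame: he first uses the elliptic property (4) to show the $\mu$-combination depends only on $z=u-v$, which simultaneously gives entirety (for a fixed difference one may slide $(u,v)$ off the polar set) and reduces the functional-equation check to one variable.
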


Additionally, we will use the following theorem of  the third author\footnote{We have rewritten this formula to account for our definition of $\vartheta$ and $\mu$, which differs from \cite{Kang}.  In particular, writing $\vartheta_K$ and $\mu_K$ to indicate the notation in \cite{Kang},  we have that $\vartheta = -i \vartheta_K$ and $\mu = i \mu_K$.} \cite{Kang}, relating a certain specialization of $\mu(u,v;\tau)$ to a universal mock theta function.
\begin{theorem}[Kang \cite{Kang}] \label{kangthm}
If $\alpha\in\mathbb{C}$ such that $\alpha \not \in \frac12 \mathbb Z \tau + \frac12 \mathbb Z$, then
\[
\mu\left(2\alpha, \frac{\tau}{2};\tau\right) = iq^\frac18 g_2(e(\alpha);q^\frac12) -e(-\alpha)q^\frac18 \frac{\eta(\tau)^4}{\eta(\frac{\tau}{2})^2 \vartheta(2\alpha;\tau)},
\]
where $g_2$ is the universal mock theta function defined by
\[
g_2(z;q):=\sum_{n=0}^\infty \frac{(-q)_n q^{n(n+1)/2}}{(z;q)_{n+1}(z^{-1}q;q)_{n+1}}.
\]
\end{theorem}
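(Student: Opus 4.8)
The plan is to reduce the claimed identity to a single bilateral Lambert-type series identity and then to resolve that identity by a partial-fraction computation. First I would substitute $u=2\alpha$ and $v=\tau/2$ directly into the defining series \eqref{series} for $\mu$. Since $q=e(\tau)$, the choice $v=\tau/2$ collapses $e^{2\pi i n v}$ to $q^{n/2}$, while $e^{\pi i u}=e(\alpha)$ and $e^{2\pi i u}=e(2\alpha)$; writing $z:=e(\alpha)$ and combining exponents one obtains
\[
\mu\!\left(2\alpha,\tfrac{\tau}{2};\tau\right)=\frac{z}{\vartheta(\tfrac{\tau}{2};\tau)}\sum_{n\in\mathbb Z}\frac{(-1)^n q^{(n^2+2n)/2}}{1-z^2q^n}.
\]
In parallel I would evaluate the normalizing value $\vartheta(\tfrac{\tau}{2};\tau)$ in closed form from the product expansion \eqref{eq:theta}: with $v=\tau/2$ the two infinite products coincide into $\prod_{n\geq1}(1-q^{n-1/2})^2$, and collecting the resulting half-integral $q$-powers against \eqref{eqn_etadef} gives $\vartheta(\tfrac{\tau}{2};\tau)=-iq^{-1/8}\,\eta(\tau/2)^2/\eta(\tau)$. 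These two reductions make the $q$-powers from $\vartheta(\tfrac{\tau}{2};\tau)$ cancel those in the statement, so that the theorem becomes equivalent to the purely $q$-theoretic assertion that the bilateral sum above reproduces $g_2(z;q^{1/2})$ up to the explicit eta/theta correction.

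The heart of the argument is this bilateral-to-unilateral passage, which I would carry out by partial fractions. Each denominator $(z;q^{1/2})_{n+1}(z^{-1}q^{1/2};q^{1/2})_{n+1}$ in the summand of $g_2(z;q^{1/2})$ is a product of simple linear factors in $z$, with nodes at $z=q^{-j/2}$ and $z=q^{(j+1)/2}$, so (for generic $\tau$) each summand expands as a finite linear combination of terms $1/(1-zq^{m/2})$ whose coefficients are finite $q$-products. Interchanging the absolutely convergent sum over $n$ with this expansion, the coefficient of each simple pole $1/(1-zq^{m/2})$ assembles into a theta series that I would sum by the Jacobi triple product (equivalently \eqref{eq:theta}); re-indexing then recovers the Lambert-type sum $\sum_n(-1)^n q^{(n^2+2n)/2}/(1-z^2q^n)$. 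The key structural observation is that the Appell--Lerch sum has poles at both $z=+q^{m/2}$ and $z=-q^{m/2}$, whereas $g_2(z;q^{1/2})$ supplies only the poles at $z=+q^{m/2}$. The missing family at $z=-q^{m/2}$ is exactly what the correction term $e(-\alpha)q^{1/8}\eta(\tau)^4/(\eta(\tau/2)^2\vartheta(2\alpha;\tau))$ contributes, since the zeros of $\vartheta(2\alpha;\tau)$ lie at $2\alpha\in\mathbb Z\tau+\mathbb Z$, i.e.\ at $z=\pm q^{m/2}$.

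The main obstacle is precisely this partial-fraction and interchange step: one must compute the residue of $g_2(z;q^{1/2})$ at each shared node $z=q^{m/2}$, justify the rearrangement despite the two-sided Gaussian tail of the bilateral series, and verify that the residues at $z=-q^{m/2}$ furnished by $1/\vartheta(2\alpha;\tau)$ carry the correct roots of unity to match the Appell--Lerch sum. As an independent check one could instead form the difference $F(\alpha)$ of the two sides and attempt a Liouville-type argument, using the pole analysis above to show $F$ is entire in $\alpha$ and the quasi-periodicity of $\mu$ (Lemma \ref{lem_mu}(1),(2)) together with \eqref{tt1} to control $F$ under $\alpha\mapsto\alpha+\tfrac12$; the delicate point in either approach is the exact cancellation of principal parts at the common nodes $z=+q^{m/2}$, where the contributions of $g_2$ and of $1/\vartheta(2\alpha;\tau)$ must balance that of $\mu$, and it is this cancellation that pins down the overall constant $i$ and the exponent $\tfrac18$ in the statement.
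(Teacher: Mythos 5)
There is nothing in the paper to compare your argument against: Theorem \ref{kangthm} is imported verbatim from Kang's paper \cite{Kang} (the only in-paper content is the footnote converting normalizations, $\vartheta=-i\vartheta_K$, $\mu=i\mu_K$), so your proof must stand entirely on its own. The parts you actually carry out are correct. Specializing \eqref{series} at $(u,v)=(2\alpha,\tau/2)$ does give $\mu(2\alpha,\tfrac{\tau}{2};\tau)=\frac{z}{\vartheta(\tau/2;\tau)}\sum_{n\in\mathbb Z}(-1)^nq^{(n^2+2n)/2}/(1-z^2q^n)$ with $z=e(\alpha)$; the evaluation $\vartheta(\tfrac{\tau}{2};\tau)=-iq^{-1/8}\eta(\tau/2)^2/\eta(\tau)$ follows from \eqref{eq:theta} and agrees with Lemma \ref{thetas}, since $e_1(\tau/2)=\eta(\tau/2)^2/\eta(\tau)$; and the pole bookkeeping is right (the Appell--Lerch side has simple poles at $z=\pm q^{m/2}$, $g_2(z;q^{1/2})$ only at $z=+q^{m/2}$, and $1/\vartheta(2\alpha;\tau)$ at both, matching the excluded set $\alpha\in\frac12\mathbb Z\tau+\frac12\mathbb Z$). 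This is the correct skeleton of a proof.

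The gap is that everything which makes the theorem true is deferred rather than proved. Your central step --- ``the coefficient of each simple pole assembles into a theta series that I would sum by the Jacobi triple product'' --- is not a routine application of the triple product and is never executed: each pole $z=q^{-j/2}$ is shared by \emph{infinitely many} summands of $g_2$, so its residue is an infinite unilateral $q$-series (a sum over $n\geq j$ of explicit Pochhammer quotients weighted by $q^{n(n+1)/4}$), and evaluating such a series in closed form requires a genuine summation theorem (a $_1\psi_1$/Rogers--Fine-type argument), not the triple product alone; the interchange of the $n$-sum with the partial-fraction expansion likewise needs justification precisely because every pole receives infinitely many contributions. The matching of these residues, together with those of $z^{-1}\eta(\tau)^4/(\eta(\tau/2)^2\vartheta(2\alpha;\tau))$ (computable via $\vartheta'(0;\tau)=-2\pi\eta(\tau)^3$ and \eqref{tt1}), against the single-term residues of the Appell--Lerch sum, and the pinning down of the constants $i$ and $q^{1/8}$, are exactly the content of the theorem, and you flag all of them as ``obstacles.'' So, as written, only the elementary rewriting has been established. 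The tractable way to finish is the route you relegate to an aside: show that both sides satisfy the same quasi-periodicity in $\alpha$ under $\alpha\mapsto\alpha+\tfrac12$ and $\alpha\mapsto\alpha+\tfrac{\tau}{2}$ (Lemma \ref{lem_mu}(1),(2) for $\mu$, a direct computation for $g_2$, and \eqref{tt1} for the theta quotient), check that the inhomogeneous terms in these functional equations cancel so that the difference is genuinely elliptic for the lattice $\tfrac12\mathbb Z\tau+\tfrac12\mathbb Z$, verify cancellation of principal parts at the one pole orbit in a fundamental cell, and conclude by Liouville plus evaluation at a single point. Until one of these routes is carried through, this is a plan, not a proof --- which is presumably why the paper simply cites \cite{Kang}.
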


The function $\mu$ is completed by defining the real-analytic function
\[
R(u;\tau):= \sum_{\nu \in\frac12 + \Z} \left({\rm{sgn}}(\nu) -  2\int_{0}^{(\nu +a)\sqrt{2y}} e^{-\pi t^2} dt  \right)(-1)^{\nu-\frac12}e^{-\pi i \nu^2 \tau - 2\pi i \nu u},
\]
with $y=\rm{Im}(\tau)$ and $a=\frac{\rm{Im}(u)}{\rm{Im}(\tau)}$. For $\tau\in\H$ and $u,v\in \C\backslash (\Z\tau + \Z)$,   Zwegers defines
\begin{equation}\label{eq:mu_tilde}
\widehat{\mu}(u,v;\tau) := \mu(u,v;\tau) + \frac{i}{2}R(u-v;\tau).
\end{equation}
The following explicit transformation properties show that $\widehat{\mu}$ transforms like a two-variable  (non-holomorphic) Jacobi form of weight $1/2$.

\begin{lemma}[Zwegers, Prop. 1.11(1,2) of \cite{zwegers}]\label{lem_muhat} Let $\widehat{\mu}(u,v;\tau)$ be defined as in (\ref{eq:mu_tilde}). Then
\begin{enumerate}
\item $\widehat{\mu}(u+k\tau+l,v+m\tau+n;\tau)=(-1)^{k+l+m+n}e^{\pi i(k-m)^2\tau+2\pi i(k-m)(u-v)}\widehat{\mu}(u,v;\tau)$, for $k,l,m,n\in\Z$, and 
\item $\widehat{\mu}\left(\frac{u}{c\tau+d},\frac{v}{c\tau+d};\frac{a\tau+b}{c\tau+d}\right)=v(\gamma)^{-3}(c\tau+d)^{\frac12}e^{-\pi ic(u-v)^2/(c\tau+d)}\widehat{\mu}(u,v;\tau)$, for $\gamma=\sm{a}{b}{c}{d}\in\textnormal{SL}_2(\mathbb Z)$, with $v(\gamma)$ defined as in (\ref{etatrgen}).
\end{enumerate}
\end{lemma}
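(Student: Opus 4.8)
The plan is to exploit the fact that the real-analytic term $\tfrac{i}{2}R(u-v;\tau)$ in (\ref{eq:mu_tilde}) is engineered precisely to cancel the failure of $\mu$ to be a Jacobi form. So I would first record the companion properties of $R$ that parallel the properties of $\mu$ in Lemma \ref{lem_mu}, and then add $\tfrac{i}{2}R(u-v;\tau)$ to each relevant relation for $\mu$ so that the obstructions cancel. Since $R$ enters $\widehat{\mu}$ only through the single variable $w:=u-v$, every shift of $u$ or of $v$ acts on the completion term through $w$ alone; from the series defining $R$ one checks directly that $R$ is even, $R(-w;\tau)=R(w;\tau)$. This is the structural reason the elliptic multiplier in (1) depends only on the combination $k-m$.

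\textbf{Elliptic transformation (part 1).} I would reduce to the generating shifts $u\mapsto u+1$, $v\mapsto v+1$, $u\mapsto u+\tau$, $v\mapsto v+\tau$, then compose and iterate. For integer shifts, Lemma \ref{lem_mu}(1),(2) give $\mu(u\pm 1,v)=-\mu(u,v)$ and $\mu(u,v\pm 1)=-\mu(u,v)$, while from the series for $R$ one checks $R(w\pm 1;\tau)=-R(w;\tau)$ (each term picks up $e^{\mp 2\pi i\nu}=-1$ for $\nu\in\tfrac12+\Z$), matching the factors $(-1)^l,(-1)^n$. For the $\tau$-shifts the completion is essential: the diagonal shift $(k,m)=(1,1)$ is trivial for $\mu$ already (the theta-quotient in Lemma \ref{lem_mu}(4) with $z=\tau$ vanishes because $\vartheta(\tau;\tau)=0$), and $R(w)$ is invariant there, so only the anti-diagonal shift $u\mapsto u+\tau$ (with $w\mapsto w+\tau$) is substantive. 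A short reindexing of the series (\ref{series}) produces the elliptic defect of $\mu$ as an explicit exponential term, and the quasiperiodicity of $R$,
$$R(w;\tau)+e^{-2\pi i w-\pi i\tau}R(w+\tau;\tau)=2e^{-\pi i w-\pi i\tau/4},$$
contributes a homogeneous part $-e^{\pi i\tau+2\pi i w}R(w)$ that supplies the multiplier together with an inhomogeneous part that exactly cancels that defect, leaving $\widehat{\mu}(u+\tau,v;\tau)=-e^{\pi i\tau+2\pi i w}\widehat{\mu}(u,v;\tau)$. Using the symmetry $\mu(-u,-v)=\mu(u,v)$ and $R(-w;\tau)=R(w;\tau)$ relates the $v$-shifts to the $u$-shifts, and iterating the four steps yields (1) with multiplier $(-1)^{k+l+m+n}e^{\pi i(k-m)^2\tau+2\pi i(k-m)w}$.

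\textbf{Modular transformation (part 2).} I would verify (2) on the generators $T=\sm{1}{1}{0}{1}$ and $S=\sm{0}{-1}{1}{0}$ and extend to $\textnormal{SL}_2(\mathbb Z)$. For $T$, Lemma \ref{lem_mu}(5) gives $\mu(u,v;\tau+1)=e^{-\pi i/4}\mu(u,v;\tau)$, and from the series $R(w;\tau+1)=e^{-\pi i/4}R(w;\tau)$ (each term contributes $e^{-\pi i\nu^2}=e^{-\pi i/4}$ since $\nu^2\in\tfrac14+\Z$); as $v(T)=\psi(T)=e^{\pi i/12}$ gives $v(T)^{-3}=e^{-\pi i/4}$ and here $c=0$, this is exactly (2) for $T$. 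The crux is $S$: Lemma \ref{lem_mu}(6) expresses the modular defect of $\mu$ as the Mordell integral $\tfrac{1}{2i}h(w;\tau)$ from (\ref{Mordell}), and the modular relation for $R$,
$$R(w;\tau)+\frac{1}{\sqrt{-i\tau}}\,e^{\pi i w^2/\tau}R\!\left(\frac{w}{\tau};-\frac1\tau\right)=h(w;\tau),$$
produces the \emph{same} Mordell integral; adding $\tfrac{i}{2}R(w;\tau)$ to (6) makes the two occurrences of $h$ cancel, and the surviving terms assemble into $\widehat{\mu}\!\left(\tfrac{u}{\tau},\tfrac{v}{\tau};-\tfrac1\tau\right)=v(S)^{-3}\tau^{1/2}e^{-\pi i w^2/\tau}\widehat{\mu}(u,v;\tau)$ once one records $v(S)=e^{-\pi i/4}$ from $\eta(-1/\tau)=\sqrt{-i\tau}\,\eta(\tau)$ and (\ref{etatrgen}), so that $v(S)^{-3}\tau^{1/2}=-\sqrt{-i\tau}$ matches the sign coming from (6). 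To pass from $\{S,T\}$ to a general $\gamma$, I would check that the proposed automorphy factor $v(\gamma)^{-3}(c\tau+d)^{1/2}e^{-\pi i c(u-v)^2/(c\tau+d)}$ is a cocycle: the factor $v(\gamma)^{-3}$ inherits the $\eta$-multiplier cocycle from (\ref{etatrgen}), the Jacobi exponential is the standard theta cocycle, and the half-integral power $(c\tau+d)^{1/2}$ combines consistently with $v^{-3}$ (this is why the multiplier is packaged as $v^{-3}$ rather than an independent sign). Induction on word length in $S,T$ then gives (2) for all $\gamma$.

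\textbf{Main obstacle.} The analytically substantive step is the modular relation for $R$ linking its $S$-defect to the Mordell integral $h$; proving it requires genuine analysis — splitting the defining sum for $R$ and identifying the incomplete-Gaussian (error-function) pieces with $h$ via a contour/Fourier argument, equivalently showing that $R$ and $h$ solve the same inhomogeneous modular equation. The remaining technical nuisance is the bookkeeping of square-root branches and $24$th roots of unity when verifying the cocycle condition in the extension from the generators to all of $\textnormal{SL}_2(\mathbb Z)$.
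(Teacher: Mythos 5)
The paper offers no proof of this lemma at all --- it is imported verbatim from Zwegers' thesis (his Prop.~1.11), whose proof combines the elliptic and modular properties of $\mu$ (his Props.~1.4--1.5, i.e.\ Lemma \ref{lem_mu} here) with the exactly parallel properties of $R$ (his Props.~1.9--1.10) so that the defects cancel. Your proposal reconstructs precisely that argument, and the auxiliary relations you invoke for $R$ --- the sign change under $w\mapsto w\pm 1$, evenness, the quasiperiodicity $R(w;\tau)+e^{-2\pi i w-\pi i\tau}R(w+\tau;\tau)=2e^{-\pi i w-\pi i\tau/4}$, the multiplier $e^{-\pi i/4}$ under $\tau\mapsto\tau+1$, and the $S$-relation $R(w;\tau)+\tfrac{1}{\sqrt{-i\tau}}e^{\pi i w^2/\tau}R(w/\tau;-1/\tau)=h(w;\tau)$ --- are all correct in sign and form and are exactly the ingredients of the cited proof, so your argument is correct and follows essentially the same route as the source.
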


As we shall see in Theorem \ref{thm_z116}, these completions are related to the unary theta function defined for $a,b\in\R$ and $\tau\in\H$ by
\begin{equation}\label{g}
g_{a,b}(\tau) := \sum_{n\in\Z} (n+a)e^{2\pi i b (n+a)}q^\frac{(n+a)^2}{2}.
\end{equation}
The following transformation properties show, in particular, that $g_{a,b}$ is a modular form of weight $3/2$ when $a$ and $b$ are rational. 
\begin{lemma}[Zwegers, Prop. 1.15 of \cite{zwegers}]\label{lem_gabprop} The function $g_{a,b}$ satisfies the following:
\begin{enumerate}
\item $g_{a+1,b}(\tau)=g_{a,b}(\tau)$,
\item $g_{a,b+1}(\tau)=e^{2\pi ia}g_{a,b}(\tau)$,
\item $g_{-a,-b}(\tau)=-g_{a,b}(\tau)$,
\item $g_{a,b}(\tau +1)=e^{-\pi ia(a+1)}g_{a,a+b+\frac12}(\tau)$,
\item $g_{a,b}(-\frac{1}{\tau})=ie^{2\pi iab}(-i\tau)^{3/2}g_{b,-a}(\tau)$.
\end{enumerate}
\end{lemma}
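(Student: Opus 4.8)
The plan is to obtain the first three identities by elementary reindexing of the defining series (\ref{g}), and to derive the two modular transformations (4) and (5) by completing the square and by Poisson summation, respectively. Throughout I use $q^{(n+a)^2/2}=e^{\pi i \tau (n+a)^2}$, so that each summand depends on $n$ and $a$ only through $n+a$, together with the Gaussian factor.

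For (1), replacing $a$ by $a+1$ in (\ref{g}) and shifting the index $n\mapsto n-1$ returns the original series, since the weight $(n+a)$, the character $e^{2\pi i b(n+a)}$, and the Gaussian all depend on $n,a$ through $n+a$. For (2), raising $b$ to $b+1$ multiplies the $n$-th term by $e^{2\pi i(n+a)}=e^{2\pi i a}$ (as $n\in\Z$), a factor independent of $n$ that pulls out of the sum. For (3), sending $a\mapsto -a$, $b\mapsto -b$ and then $n\mapsto -n$ maps $(n+a)\mapsto -(n+a)$; the Gaussian and the character are even in $n+a$, while the linear weight contributes the global sign $-1$. For (4), substituting $\tau\mapsto\tau+1$ in (\ref{g}) inserts the extra factor $e^{\pi i (n+a)^2}$ into each term. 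Using $n^2\equiv n \pmod 2$, i.e. $e^{\pi i n^2}=e^{\pi i n}$, completing the square gives $e^{\pi i(n+a)^2}=e^{-\pi i a(a+1)}e^{2\pi i a(n+a)}e^{\pi i(n+a)}=e^{-\pi i a(a+1)}e^{2\pi i(a+\frac12)(n+a)}$; absorbing $e^{2\pi i(a+\frac12)(n+a)}$ into the character shifts $b$ to $a+b+\frac12$ and leaves the constant $e^{-\pi i a(a+1)}$, which is (4).

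The substance is (5), which I would prove by Poisson summation. Writing $g_{a,b}(-1/\tau)=\sum_{n}G(n)$ with $G(x):=(x+a)e^{2\pi i b(x+a)}e^{-\pi i (x+a)^2/\tau}$, a Schwartz function since $\text{Im}(-1/\tau)>0$, Poisson gives $\sum_n G(n)=\sum_m \widehat G(m)$. After the substitution $t=x+a$ one is reduced to the Gaussian integral $\int_\R e^{\pi i \sigma t^2}e^{2\pi i c t}\,dt=(-i\sigma)^{-1/2}e^{-\pi i c^2/\sigma}$, valid for $\text{Im}\,\sigma>0$, and its $c$-derivative $\int_\R t\,e^{\pi i \sigma t^2}e^{2\pi i c t}\,dt=-\frac{c}{\sigma}(-i\sigma)^{-1/2}e^{-\pi i c^2/\sigma}$, evaluated at $\sigma=-1/\tau$ and $c=b-m$. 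This yields $\widehat G(m)=\tau(i/\tau)^{-1/2}(b-m)e^{2\pi i m a}e^{\pi i \tau (b-m)^2}$, and reindexing $m\mapsto -n$ while factoring out $e^{2\pi i a b}$ identifies $\sum_m\widehat G(m)=\tau(i/\tau)^{-1/2}e^{2\pi i a b}\,g_{b,-a}(\tau)$.

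The main obstacle, and essentially the only delicate point, is the branch of the square root: I must verify $\tau(i/\tau)^{-1/2}=i(-i\tau)^{3/2}$ with all radicals on the principal branch. For $\text{Im}\,\tau>0$ both $-i\tau$ and $i/\tau$ have positive real part, so the principal branch applies throughout; writing $-i\tau=e^{-\pi i/2}\tau$ and $i=e^{\pi i/2}$ gives $i(-i\tau)^{3/2}=e^{-\pi i/4}\tau^{3/2}=\tau^{3/2}i^{-1/2}=\tau(i/\tau)^{-1/2}$, which matches and produces the factor $i(-i\tau)^{3/2}$ in (5). It remains only to justify the Poisson summation itself, via the Gaussian decay of $G$ and $\widehat G$, and the differentiation under the integral sign used to pass from the Gaussian integral to its weighted version; both are routine once $\text{Im}\,\sigma>0$ is in force.
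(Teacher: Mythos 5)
Your proof is correct: (1)--(3) by reindexing, (4) by completing the square using $e^{\pi i n^2}=e^{\pi i n}$, and (5) by Poisson summation are all verified, including the only delicate point, the principal-branch identity $\tau\,(i/\tau)^{-1/2}=i(-i\tau)^{3/2}$, which holds since both $-i\tau$ and $i/\tau$ have positive real part for $\tau$ in the upper half-plane. Note that the paper itself contains no proof of this lemma --- it is quoted verbatim from Zwegers' thesis (Prop.\ 1.15) --- so there is no internal argument to compare against; your route is the standard one and matches Zwegers' original treatment.
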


The unary theta function $g_{a,b}$ is related to both $R$ and $h$ by the following theorem.

\begin{theorem}[Zwegers, Thm. 1.16 of \cite{zwegers}]\label{thm_z116}
For $\tau \in \H$, we have the following two results.

When $a\in (-\frac12, \frac12)$ and $b \in \R$,
\begin{equation}\label{integral}
\int_{-\overline{\tau}}^{i\infty} \frac{g_{a+\frac{1}{2},b+\frac{1}{2}}(z)}{\sqrt{-i(z+\tau)}}dz = -e^{2\pi i a(b+\frac{1}{2})}q^{-\frac{a^2}{2}} R(a\tau-b;\tau).
\end{equation}

Also, when $a,b \in (-\frac12, \frac12)$,
\begin{equation}\label{integral_h}
\int_{0}^{i\infty} \frac{g_{a+\frac{1}{2},b+\frac{1}{2}}(z)}{\sqrt{-i(z+\tau)}}dz =-e^{2\pi i a(b+\frac{1}{2})}q^{-\frac{a^2}{2}} h(a\tau-b;\tau).
\end{equation}
\end{theorem}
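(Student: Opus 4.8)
The plan is to prove both identities by inserting the series definition of $g_{a+\frac12,b+\frac12}$ into the period integral and reducing each summand to an explicit Gaussian; the two cases differ only in whether the lower endpoint produces a complete or an incomplete Gaussian, which is exactly what distinguishes $h$ from $R$. For (\ref{integral}), I first reindex $n+\frac12=\nu\in\frac12+\Z$ and write
\[
g_{a+\frac12,b+\frac12}(z)=\sum_{\nu\in\frac12+\Z}(\nu+a)\,e^{2\pi i(b+\frac12)(\nu+a)}\,e^{\pi iz(\nu+a)^2}.
\]
Since $\mathrm{Im}(-\overline{\tau})=y>0$, the lower endpoint lies in $\H$, and taking the vertical ray $\mathrm{Re}(z)=-\mathrm{Re}(\tau)$ as the path one checks the estimate $\sum_\nu|\nu+a|\int_{-\overline{\tau}}^{i\infty}|e^{\pi iz(\nu+a)^2}|\,|dz|<\infty$, so Fubini legitimizes interchanging sum and integral. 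Substituting $z=w-\tau$ turns the $\nu$-th integral into $e^{-\pi i\tau(\nu+a)^2}\int_{2iy}^{i\infty}(-iw)^{-\frac12}e^{\pi iw(\nu+a)^2}\,dw$, and parametrizing $w=it$ followed by $t=\sigma^2/(\nu+a)^2$ yields a complementary error function $\tfrac{2}{|\nu+a|}\int_{|\nu+a|\sqrt{2y}}^{\infty}e^{-\pi\sigma^2}\,d\sigma$.

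The decisive bookkeeping step is the elementary identity
\[
(\nu+a)\cdot\frac{2}{|\nu+a|}\int_{|\nu+a|\sqrt{2y}}^{\infty}e^{-\pi\sigma^2}\,d\sigma=\mathrm{sgn}(\nu)-2\int_0^{(\nu+a)\sqrt{2y}}e^{-\pi\sigma^2}\,d\sigma,
\]
which holds because $a\in(-\frac12,\frac12)$ forces $\mathrm{sgn}(\nu+a)=\mathrm{sgn}(\nu)$; this is precisely the coefficient in the definition of $R$. Collecting all phases (including the factor $i$ from $dw=i\,dt$), using $e^{\pi i\nu}=i(-1)^{\nu-\frac12}$, and noting the cancellation $q^{-\frac{a^2}{2}}e^{\pi ia^2\tau}=1$, the $\nu$-th summand matches $-e^{2\pi ia(b+\frac12)}q^{-\frac{a^2}{2}}$ times the $\nu$-th term of $R(a\tau-b;\tau)$, proving (\ref{integral}).

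For (\ref{integral_h}) the same substitution would send the lower limit to $w=\tau$ and replace the error function by the full Gaussian value $\tfrac12$; however, along $z=is$ the term-by-term integrand behaves like $s^{-3/2}$ as $s\to0^+$, so naive interchange of sum and integral is now illegitimate, even though the genuine integral converges because $g_{a+\frac12,b+\frac12}(z)\to0$ exponentially as $z\to0$ by the $S$-transformation of Lemma \ref{lem_gabprop}(5). The remedy is the heat-kernel representation $(-i(z+\tau))^{-\frac12}=\int_\R e^{\pi i(z+\tau)x^2}\,dx$: inserting it gives an absolutely convergent double integral (since $\int_0^\infty|g_{a+\frac12,b+\frac12}(is)|(s+y)^{-\frac12}\,ds<\infty$), after which Fubini permits doing the $z$-integration first, producing
\[
\int_0^{i\infty}\frac{g_{a+\frac12,b+\frac12}(z)}{\sqrt{-i(z+\tau)}}\,dz=\frac{i}{\pi}\int_\R e^{\pi i\tau x^2}\sum_{\nu\in\frac12+\Z}\frac{(\nu+a)\,e^{2\pi i(b+\frac12)(\nu+a)}}{(\nu+a)^2+x^2}\,dx.
\]
The inner sum is evaluated in closed form by partial fractions together with $\sum_{m\in\Z}e^{2\pi icm}/(m+s)=\pi e^{\pi is(1-2c)}/\sin\pi s$ (valid for $c=b+\frac12\in(0,1)$, hence using $b\in(-\frac12,\frac12)$), which collapses it to a hyperbolic secant in $x$; completing the square in the remaining Gaussian and shifting the contour $x\mapsto x-ai$ then identifies the result with $-e^{2\pi ia(b+\frac12)}q^{-\frac{a^2}{2}}h(a\tau-b;\tau)$, the prefactor $q^{-\frac{a^2}{2}}$ being exactly what converts $h$'s Gaussian back to $e^{\pi i\tau x^2}$.

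The main obstacle is analytic rather than algebraic. For (\ref{integral}) it is only the Fubini estimate on the vertical ray, after which everything is an exact Gaussian computation. For (\ref{integral_h}) the difficulty is genuine: the failure of naive term-by-term integration forces the heat-kernel detour, and the inner sum is merely conditionally convergent, so its closed-form evaluation must be carried out as a symmetric limit (or via an Abel convergence factor). The contour shift $x\mapsto x-ai$ likewise needs justification, which rests on the absence of poles of $1/\cosh\pi(x-\theta i)$ for $\theta\in(-\frac12,\frac12)$ — guaranteed precisely by the hypothesis $a\in(-\frac12,\frac12)$. Once these interchanges and deformations are secured, the surviving phase-and-sign bookkeeping, though lengthy, is entirely routine.
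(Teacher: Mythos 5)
Your proposal is correct in substance, but note first that the paper gives no proof of this statement at all: it is imported verbatim as Theorem 1.16 of Zwegers' thesis, so the comparison must be with Zwegers' original argument (which the paper's proof of Lemma \ref{lem_Zlem} sketches in passing). For \eqref{integral} you follow essentially the same route as Zwegers: termwise integration justified by absolute convergence on the ray from $-\overline{\tau}$, reduction of each Gaussian to an incomplete error function, and the sign identity $\mathrm{sgn}(\nu+a)=\mathrm{sgn}(\nu)$ for $a\in(-\frac12,\frac12)$ --- your phase bookkeeping ($e^{\pi i\nu}=i(-1)^{\nu-\frac12}$, the factor $q^{-a^2/2}=e^{-\pi i a^2\tau}$) checks out. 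For \eqref{integral_h}, however, you take a genuinely different route: Zwegers deduces the Mordell-integral version from the $R$-version by splitting the path of integration and applying the inversion laws for $g_{a,b}$ (Lemma \ref{lem_gabprop}(5)) and for $R$ (the relation with $h$ implicit in Lemma \ref{lem_mu}(6) and Lemma \ref{lem_muhat}), which is exactly the mechanism the paper reuses in Remark 1.20 of \cite{zwegers} to get parts (ii) and (iii) of Lemma \ref{lem_Zlem}. Your direct computation --- heat-kernel representation of $(-i(z+\tau))^{-1/2}$, evaluation of the resulting conditionally convergent sum by the Lipschitz formula $\sum_{m\in\Z}e^{2\pi i cm}/(m+s)=\pi e^{\pi i s(1-2c)}/\sin\pi s$ with $c=b+\frac12\in(0,1)$, and the contour shift $x\mapsto x-ia$ avoiding the poles of $1/\cosh$ precisely because $|a|<\frac12$ --- is a valid, self-contained alternative (essentially Mordell's classical evaluation run in reverse), at the cost of the delicate interchanges you rightly flag, which Zwegers' transformation-law argument avoids entirely. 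One small slip: on the ray $z=is$ the summed absolute integrand grows like $s^{-1}$ (the theta-type estimate $\sum_{\nu}|\nu+a|e^{-\pi s(\nu+a)^2}\asymp s^{-1}$ times the bounded factor $(s+y)^{-1/2}$), not $s^{-3/2}$; this does not affect your conclusion, since $s^{-1}$ already destroys absolute convergence and forces the detour you describe.
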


We extend Theorem \ref{thm_z116} in the following result, which we will use in our proof of Theorem \ref{thm_quantum}.

  \begin{lemma}\label{lem_Zlem}  Let $\tau \in \mathbb H$.  \begin{itemize}\item[i)] For $b\in\mathbb R\setminus \frac12\mathbb Z$, $$\int_{-\overline{\tau}}^{i\infty} \frac{g_{1,b+\frac12}(z)}{\sqrt{-i(z+\tau)}} dz = -ie\left(-\frac{\tau}{8} + \frac{b}{2}  \right)R\left(\frac{\tau}{2}-b;\tau\right) + i.$$
  \item[ii)] For $b\in (-\frac12,\frac12)\setminus\{0\}$, $$\int_{0}^{i\infty} \frac{g_{1,b+\frac12}(z)}{\sqrt{-i(z+\tau)}}dz = -ie\left(-\frac{\tau}{8} + \frac{b}{2}  \right)h\left(\frac{\tau}{2}-b;\tau\right) + i.$$
   \item[iii)] For $a\in (-\frac12,\frac12)\setminus\{0\}$, $$\int_{0}^{i\infty} \frac{g_{a+1/2,1}(z)}{\sqrt{-i(z+\tau)}}dz = -e\left(-\frac{a^2}{2}\tau + a  \right)h\left(a\tau-\frac12;\tau\right) + \frac{e(a)}{\sqrt{-i\tau}}.$$
  \end{itemize}
  \end{lemma}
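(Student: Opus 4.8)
The plan is to derive each of the three identities by reducing the integrand $g_{1,b+\frac12}$ (resp.\ $g_{a+\frac12,1}$) to the case already handled in Theorem \ref{thm_z116} via the elementary transformation properties of $g_{a,b}$ listed in Lemma \ref{lem_gabprop}. The point is that the parameters $a=1$ (part i, ii) and $b=1$ (part iii) lie outside the ranges $a,b\in(-\frac12,\frac12)$ for which \eqref{integral} and \eqref{integral_h} apply, so I first renormalize the parameters into the admissible range, picking up an exponential factor and, crucially, an extra theta-term that will account for the constant summands $i$, $i$, and $e(a)/\sqrt{-i\tau}$ on the right-hand sides.

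For part (i), I would start from $g_{1,b+\frac12}$ and use periodicity. Since Lemma \ref{lem_gabprop}(1) gives $g_{a+1,b}=g_{a,b}$, I cannot directly shift $a=1$ to $a=0$ while staying in the sum as written; instead I isolate the $n=0$ (equivalently $n=-1$) term. Writing $g_{1,b+\frac12}(z)=\sum_{n\in\Z}(n+1)e^{2\pi i(b+\frac12)(n+1)}q^{(n+1)^2/2}$ and reindexing $m=n+1$, this is $\sum_{m\in\Z} m\,e^{2\pi i(b+\frac12)m}q^{m^2/2}$, which equals $g_{0,b+\frac12}(z)$ formally, but the genuine object $g_{0,b+\frac12}$ as defined in \eqref{g} has an $a=0$ that sits at the boundary of $(-\frac12,\frac12)$; I therefore keep track of the removed $m=0$ term (which vanishes because of the factor $m$) and recognize that the discrepancy between $g_{1,b+\frac12}$ and the theta function to which \eqref{integral} applies is controlled by Lemma \ref{lem_gabprop}(3) (oddness) together with the $a=0$ specialization. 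Applying \eqref{integral} with $a\to 0$, $b\to b$ yields $-e\big(-\frac{a^2}{2}\tau\big)\to 1$ times $R(\tfrac{\tau}{2}-b;\tau)$ up to the phase $-ie(-\frac{\tau}{8}+\frac{b}{2})$ coming from matching $g_{a+\frac12,b+\frac12}$ against $g_{1,b+\frac12}$; the leftover constant $+i$ then emerges as the contribution of the single boundary lattice term, which I would evaluate by a direct Gaussian-integral computation of $\int_{-\overline{\tau}}^{i\infty}(\cdots)/\sqrt{-i(z+\tau)}\,dz$ for that one term. Part (ii) is identical except that the contour runs from $0$ rather than $-\overline\tau$, so I invoke \eqref{integral_h} in place of \eqref{integral} and the Mordell integral $h$ replaces $R$; the same boundary term again produces the constant $+i$.

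Part (iii) is the mirror image: here $b=1$ is the out-of-range parameter, so I use Lemma \ref{lem_gabprop}(2), $g_{a,b+1}=e^{2\pi i a}g_{a,b}$, to reduce $g_{a+\frac12,1}=e^{2\pi i(a+\frac12)}g_{a+\frac12,0}$, and then match against $g_{a+\frac12,b+\frac12}$ with $b\to-\frac12$, which is again a boundary value. Applying \eqref{integral_h} with $a\to a$, $b\to -\frac12$ (formally) gives the $h(a\tau-\frac12;\tau)$ term with the stated phase $-e(-\frac{a^2}{2}\tau+a)$, and the residual constant $e(a)/\sqrt{-i\tau}$ arises from the boundary correction; I expect to verify this constant by the modular transformation $h$ satisfies (which follows from Lemma \ref{lem_mu}(6)) or by a direct evaluation of the stray term's integral using $\int_0^{i\infty}(-i(z+\tau))^{-1/2}\,dz$-type formulas.

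The main obstacle will be pinning down the additive constants ($+i$, $+i$, and $e(a)/\sqrt{-i\tau}$) rigorously rather than heuristically: since I am evaluating $g_{a,b}$ at boundary values $a=0$ or $b=-\frac12$ where Theorem \ref{thm_z116} does not literally apply, the cleanest route is to \emph{not} pass to the boundary at all, but instead to subtract off the offending single lattice term before applying the theorem in its stated range, compute that one term's Eichler integral in closed form, and show the closed form equals the claimed constant. Controlling convergence of the contour integral for the isolated term near the cusp $i\infty$, and correctly combining the half-lattice phases from Lemma \ref{lem_gabprop} with the $q^{-a^2/2}$ and $e^{2\pi i a(b+1/2)}$ factors in \eqref{integral}--\eqref{integral_h}, is where the bookkeeping must be done carefully.
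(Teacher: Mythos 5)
Your high-level strategy---specialize Theorem \ref{thm_z116} at the boundary of its parameter range and isolate a correction coming from a single problematic term---is the right one, and for part (i) it is close in spirit to the paper's proof (which reruns Zwegers' term-by-term computation at the boundary and observes that $\mathrm{sgn}\left(\nu+\tfrac12\right)$ and $\mathrm{sgn}(\nu)$ differ only at $\nu=-\tfrac12$). But your execution has two concrete problems. First, the parameter bookkeeping is wrong: to reach $g_{1,b+\frac12}$ you must specialize (\ref{integral}) at $a=\tfrac12$ (so that the first subscript $a+\tfrac12$ tends to $1$), not at $a\to 0$; with $a=0$ you would obtain $R(-b;\tau)$ and trivial phase rather than $R\left(\tfrac{\tau}{2}-b;\tau\right)$ and $e\left(-\tfrac{\tau}{8}+\tfrac{b}{2}\right)$, so the derivation you describe does not produce the expressions you wrote down. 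Second, and more seriously, the constant $+i$ is \emph{not} ``the contribution of the single boundary lattice term'' evaluated by a Gaussian integral: at the boundary that term (your $m=0$ term) is identically zero, so its Eichler integral is zero. The constant arises because the one-term integral $\nu\mapsto\int_{-\overline{\tau}}^{i\infty}\nu e^{\pi i\nu^2 z}\left(-i(z+\tau)\right)^{-1/2}dz = ie^{-\pi i\nu^2\tau}\left(\mathrm{sgn}(\nu)-2\int_0^{\nu\sqrt{2\,\mathrm{Im}(\tau)}}e^{-\pi t^2}dt\right)$ is \emph{discontinuous} at $\nu=0$: its limit as $\nu\to0^-$ is $-i$ while its value at $\nu=0$ is $0$. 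Equivalently, $R\left(\tfrac{\tau}{2}-b;\tau\right)$ contains a nonvanishing $\nu=-\tfrac12$ term with no counterpart in the integral. Your subtract-before-the-boundary variant can be made rigorous, but only if you recognize this jump and justify interchanging the limit $a\to\tfrac12^-$ with integration of the remaining sum; as written, computing ``that one term's Eichler integral in closed form'' at the boundary yields $0$, not $i$, and the argument collapses.

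Part (iii) is a gap of a different kind: there the degenerate parameter is the second subscript, no term of $g_{a+\frac12,1}$ vanishes or is otherwise ``offending,'' and the failure of (\ref{integral_h}) at this boundary is caused by loss of uniform integrability at the endpoint $z=0$ of the contour, not by an isolated lattice term. Note also that $\int_0^{i\infty}\left(-i(z+\tau)\right)^{-1/2}dz$ diverges, so no ``stray term'' computation of the type you describe can produce $e(a)/\sqrt{-i\tau}$; the factor $1/\sqrt{-i\tau}$ is the signature of a modular inversion. The paper instead derives parts (ii) and (iii) from part (i) by the argument of Remark 1.20 of \cite{zwegers}, transporting the already-established boundary identity through $z\mapsto -1/z$, which swaps the roles of the two subscripts of $g_{a,b}$ (Lemma \ref{lem_gabprop}(5)) and converts $R$ into $h$. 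Your passing mention of ``the modular transformation $h$ satisfies'' points in this direction, but that transformation is the actual proof mechanism for (iii), not a way to verify a constant after the fact, and it is left undeveloped in your proposal.
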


  \begin{proof}[Proof of Lemma \ref{lem_Zlem}]  If $b\in \mathbb R \setminus \frac12 \mathbb Z$, we have that $g_{1,b+\frac12}(z) = O\left(e^{-\pi \textnormal{Im}(z)}\right).$  If $a\in(-1/2,1/2)\setminus \{0\}$, we have that $g_{a+\frac12,1}(z) = O\left(e^{-\pi v_0^2  \textnormal{Im}(z)}\right)$ for some $v_0>0$ as $\textnormal{Im}(z) \to \infty$.  These facts justify the convergence of the integrals in Lemma \ref{lem_Zlem}.

  The proof of (\ref{integral}) in \cite{zwegers} yields that the integral on the left hand side of i) in Lemma \ref{lem_Zlem} equals
  \begin{align} \label{eqn_g1R} -ie\left(-\frac{\tau}{8}+ \frac{b}{2}\right) \sum_{v\in \frac12 + \mathbb Z} \left(\text{sgn}\left(v+\frac12\right) - 2\int_0^{\left(v+\frac12\right)\sqrt{2\textnormal{Im}(\tau)}} e^{-\pi u^2} du \right)(-1)^{v-\frac12}e\left(-\frac{v^2\tau}{2} - v\left(\frac{\tau}{2}-b\right)\right).\end{align}
 Now for all $v\in (\frac12 + \mathbb Z) \setminus \{ -\frac12\}$, we have that $\text{sgn}\left(v+\frac12\right) = \text{sgn}(v)$.  For $v=-\frac12$, we have that $0=\text{sgn}\left(-\frac12 + \frac12\right)   = \text{sgn}\left(-\frac12\right) + 1$. Making these substitutions into (\ref{eqn_g1R}) and simplifying proves part i) of Lemma \ref{lem_Zlem}.

 Part ii) and part iii) of Lemma \ref{lem_Zlem} now follow as argued in Remark 1.20 in \cite{zwegers}  by using part i) of Lemma \ref{lem_Zlem} above (rather than (\ref{integral})) where necessary.
  \end{proof}

In the following section, we review the connection between $\widehat{\mu}$ and the theory of harmonic Maass forms.

\subsection{Harmonic Maass forms of weight $1/2$, and period and Mordell integrals}\label{sub:construction}
 Following Bruinier and Funke \cite{BF}, a \emph{harmonic Maass form} $\widehat{f}:\mathbb{H}\to\mathbb{C}$ is a non-holomorphic extension of  a classical modular form.  It is a smooth function 
 such that for  a  weight $\kappa \in \frac{1}{2}\Z$, if $\Gamma \subseteq \rm{SL}_2(\Z)$ and $\chi$ is a Dirichlet character modulo $N$, then for all $\gamma=\left(\begin{smallmatrix}a & b\\c & d\end{smallmatrix}\right) \in \Gamma$ and $\tau \in \H$ we have $\widehat{f}(\gamma \tau) = \chi(d)(c\tau+d)^{\kappa}\widehat{f}(\tau)$. 
Moreover, $\widehat{f}$ must vanish under the weight $\kappa$ Laplacian operator defined, for $\tau = x+iy$, by
\[
\Delta_{\kappa}:=-y^2\left(\frac{\partial^2}{\partial x^2}+\frac{\partial^2}{\partial y^2}\right)+i\kappa y\left(\frac{\partial}{\partial x}+i\frac{\partial}{\partial y}\right).
\]
Additionally, $\widehat{f}$ must have at most linear exponential growth at all cusps.

The Fourier series of a harmonic  Maass form $\widehat{f}$ of weight $\kappa$ naturally decomposes as the sum of  a holomorphic and a non-holomorphic part.  We refer to the holomorphic part $f$  as a \emph{mock modular form} of weight $\kappa$  after Zagier \cite{ZagierB}.    In the special  case $\kappa\in \{1/2,3/2\}$,   we refer to  $f$ as a \emph{mock theta function}. Moreover, a harmonic Maass form $\widehat{f}$ of weight $\kappa$ is mapped to a classical modular form of weight $2-\kappa$ by the differential operator
\[
\xi_{\kappa}:=2iy^{\kappa}\cdot\overline{\frac{\partial}{\partial{\overline{\tau}}}}.
\]
The image of $\widehat{f}$ under $\xi_{\kappa}$ is called the \emph{shadow} of $ f  $. We next show that certain  specializations of the function  $\mu$ are essentially  mock theta functions with shadows related to $g_{a,b}$. Similar results are known, however in this paper we require and thus establish the precise statement given in  Proposition 2.8.  To state it, we define for a function $g:\mathbb H\to \mathbb C$ its \emph{complement} $$g^c(\tau) := \overline{g(-\overline{\tau})}.$$  For $\tau \in \mathbb H$, we define for $a,b\in\mathbb R$ and $u,v \in \mathbb C \setminus (\mathbb Z \tau + \mathbb Z)$   the function
\begin{align}\label{def_Mhat}\widehat{M}_{a,b}(\tau):=-\sqrt{2} e^{2\pi i a\left(b+\frac12\right)} q^{-\frac{a^2}{2}} \widehat{\mu}(u,v;\tau).\end{align}
We denote the holomorphic part of $\widehat{M}_{a,b}$ by $M_{a,b}$, that is,
$ M_{a,b}(\tau):=-\sqrt{2} e^{2\pi i a\left(b+\frac12\right)} q^{-\frac{a^2}{2}}  \mu(u,v;\tau).$

\begin{proposition}\label{prop_p1shadows}  Let $\tau \in \mathbb H$, and $u,v \in \mathbb C \setminus (\mathbb Z\tau + \mathbb Z).  $  If $u-v=a\tau - b$ for some $a,b \in \mathbb R$, then  the function $\widehat{M}_{a,b}(\tau)$ satisfies
\begin{enumerate} \item[(i)] $\xi_{\frac12}\left(\widehat{M}_{a,b}(\tau)\right) = g^c_{a+\frac12,b+\frac12}(\tau), $
\item[(ii)] $\Delta_{\frac12} (\widehat{M}_{a,b}(\tau)) = 0.$
\end{enumerate}
  \end{proposition}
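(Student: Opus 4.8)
The plan is to exploit the canonical decomposition of $\widehat{M}_{a,b}$ into its holomorphic part $M_{a,b}$ and a non-holomorphic part built from Zwegers' function $R$, and to recognize the latter as an Eichler-type integral of the unary theta function $g_{a+\frac12,b+\frac12}$ via Theorem \ref{thm_z116}. Writing $\widehat{\mu}=\mu+\tfrac{i}{2}R(u-v;\tau)$ as in (\ref{eq:mu_tilde}) and recalling $u-v=a\tau-b$ (so that the parameter $\tfrac{\textnormal{Im}(u-v)}{\textnormal{Im}(\tau)}$ in the definition of $R$ equals $a$), definition (\ref{def_Mhat}) gives $\widehat{M}_{a,b}=M_{a,b}-\sqrt{2}\,e^{2\pi i a(b+\frac12)}q^{-a^2/2}\cdot\tfrac{i}{2}R(a\tau-b;\tau)$. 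Substituting the identity (\ref{integral}) of Theorem \ref{thm_z116} in the form $R(a\tau-b;\tau)=-e^{-2\pi i a(b+\frac12)}q^{a^2/2}\int_{-\overline{\tau}}^{i\infty}\frac{g_{a+\frac12,b+\frac12}(z)}{\sqrt{-i(z+\tau)}}\,dz$ collapses the prefactors (here $\tfrac{\sqrt2}{2}=\tfrac{1}{\sqrt2}$) and shows that the non-holomorphic part of $\widehat{M}_{a,b}$ is precisely $\frac{i}{\sqrt2}\int_{-\overline{\tau}}^{i\infty}\frac{g_{a+\frac12,b+\frac12}(z)}{\sqrt{-i(z+\tau)}}\,dz$.

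To prove (i) I would then apply $\xi_{\frac12}=2iy^{\frac12}\,\overline{\partial_{\overline{\tau}}(\cdot)}$ directly. The holomorphic part $M_{a,b}$ contributes nothing: with $u,v$ chosen to depend holomorphically on $\tau$ subject to $u-v=a\tau-b$, both $q^{-a^2/2}$ and $\mu(u,v;\tau)$ are holomorphic in $\tau$, so $\partial_{\overline{\tau}}M_{a,b}=0$. For the integral, the integrand depends on $\tau$ only holomorphically through $z+\tau$, so $\partial_{\overline{\tau}}$ sees only the moving lower endpoint $-\overline{\tau}$; the fundamental theorem of calculus together with $\partial_{\overline{\tau}}(-\overline{\tau})=-1$ gives $\partial_{\overline{\tau}}\int_{-\overline{\tau}}^{i\infty}\frac{g_{a+\frac12,b+\frac12}(z)}{\sqrt{-i(z+\tau)}}\,dz=\frac{g_{a+\frac12,b+\frac12}(-\overline{\tau})}{\sqrt{-i(\tau-\overline{\tau})}}=\frac{g_{a+\frac12,b+\frac12}(-\overline{\tau})}{\sqrt{2y}}$, using $\tau-\overline{\tau}=2iy$. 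Multiplying by $\frac{i}{\sqrt2}$, conjugating, and multiplying by $2iy^{\frac12}$, all numerical constants (the $\sqrt2$, the $\tfrac12$, and the two factors of $i$) cancel, leaving $\xi_{\frac12}\widehat{M}_{a,b}(\tau)=\overline{g_{a+\frac12,b+\frac12}(-\overline{\tau})}=g^c_{a+\frac12,b+\frac12}(\tau)$, which is exactly (i).

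Part (ii) then follows formally from (i) via the standard factorization $\Delta_{\frac12}=-\xi_{\frac32}\circ\xi_{\frac12}$ of the weight-$\frac12$ Laplacian. Since $g_{a+\frac12,b+\frac12}$ is holomorphic, its complement $g^c_{a+\frac12,b+\frac12}(\tau)=\overline{g_{a+\frac12,b+\frac12}(-\overline{\tau})}$ is again holomorphic in $\tau$ (conjugating the $q$-expansion coefficients and replacing $\overline{\tau}$ by $\tau$), so $\xi_{\frac32}g^c_{a+\frac12,b+\frac12}=2iy^{\frac32}\,\overline{\partial_{\overline{\tau}}g^c_{a+\frac12,b+\frac12}}=0$. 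Hence by (i), $\Delta_{\frac12}\widehat{M}_{a,b}=-\xi_{\frac32}\xi_{\frac12}\widehat{M}_{a,b}=0$.

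The main obstacle is the careful bookkeeping of the normalizing constants and roots of unity in the first step, so that the prefactors in (\ref{def_Mhat}) cancel exactly against those produced by (\ref{integral}) and by $\xi_{\frac12}$ to leave coefficient $1$ in front of $g^c_{a+\frac12,b+\frac12}$; a single sign or factor slip here would propagate into every downstream shadow computation. A secondary point is that Theorem \ref{thm_z116} requires $a\in(-\tfrac12,\tfrac12)$, whereas the proposition allows $a\in\mathbb{R}$; I would reduce to this range using the periodicity $g_{a+1,b}=g_{a,b}$ of Lemma \ref{lem_gabprop}(1) together with the matching shift behavior of $R$ and $\mu$ under $a\mapsto a+1$, which leaves both sides of (i) unchanged. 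The differentiation of the Eichler integral at its moving endpoint is the one genuinely analytic step, justified by the rapid decay of $g_{a+\frac12,b+\frac12}$ toward $i\infty$ noted in the proof of Lemma \ref{lem_Zlem}.
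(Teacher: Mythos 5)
Your core computation is correct: the prefactors in (\ref{def_Mhat}) do cancel against those in (\ref{integral}) and in $\xi_{\frac12}$ (indeed $2i\cdot\bigl(-\tfrac{i}{\sqrt{2}}\bigr)\cdot y^{\frac12}/\sqrt{2y}=1$), and your part (ii) is exactly the paper's argument via $\Delta_{\frac12}=-\xi_{\frac32}\circ\xi_{\frac12}$ and the holomorphy of $g^c_{a+\frac12,b+\frac12}$. But your route through the key step of part (i) is genuinely different from the paper's. The paper never introduces the Eichler integral: it differentiates $R(a\tau-b;\tau)$ directly, quoting Zwegers' explicit series formula for $\partial_{\overline{\tau}}R$ (reproduced as (\ref{eqn_xiR})), and then resums the conjugated series to get $g_{a+\frac12,-b-\frac12}(\tau)=g^c_{a+\frac12,b+\frac12}(\tau)$ as in (\ref{eqn_3g}). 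You instead rewrite the non-holomorphic part as $\tfrac{i}{\sqrt{2}}\int_{-\overline{\tau}}^{i\infty}g_{a+\frac12,b+\frac12}(z)\,(-i(z+\tau))^{-\frac12}\,dz$ via Theorem \ref{thm_z116} and differentiate at the moving endpoint. Your version makes the structural fact visible (the non-holomorphic part is a period integral of the shadow), at the cost of importing the hypothesis of Theorem \ref{thm_z116}; the paper's version is a blunter series manipulation but is valid for all $a,b\in\mathbb{R}$ with no case distinctions.

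That hypothesis is where your argument has a residual gap. The proposition allows arbitrary $a\in\mathbb{R}$, while (\ref{integral}) requires $a\in\bigl(-\tfrac12,\tfrac12\bigr)$, an \emph{open} interval. Your reduction via $g_{a+1,b}=g_{a,b}$ and the shift behavior of $\widehat{\mu}$ (Lemma \ref{lem_muhat}(1) shows $\widehat{M}_{a+1,b}$ built from $(u+\tau,v)$ equals $\widehat{M}_{a,b}$ built from $(u,v)$, so both sides of (i) are indeed unchanged) only lands $a$ in a half-open interval such as $\bigl[-\tfrac12,\tfrac12\bigr)$; it cannot reach the interior when $a\in\tfrac12+\mathbb{Z}$. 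For such $a$ the identity (\ref{integral}) is actually false as stated: the paper's Lemma \ref{lem_Zlem}(i), which treats precisely $g_{1,b+\frac12}$ (i.e.\ $a=\tfrac12$), shows the Eichler integral equals the expected multiple of $R$ \emph{plus} the constant $i$, coming from the $\operatorname{sgn}(0)$ term in $R$. The patch is easy — any additive constant is annihilated by $\partial_{\overline{\tau}}$, so (i) survives — but to cover the proposition in the stated generality you must invoke Lemma \ref{lem_Zlem}(i) (and deal with its own restriction $b\notin\tfrac12\mathbb{Z}$, e.g.\ by a further shift in $b$ using $R(u+1)=-R(u)$ and Lemma \ref{lem_gabprop}(2)) rather than Theorem \ref{thm_z116}. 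The paper's direct differentiation of $R$ sidesteps this boundary case entirely.
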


\begin{remark} Part (ii) of Proposition \ref{prop_p1shadows} together with the transformation laws established in Lemma \ref{lem_muhat} show that $\widehat{M}_{a,b}$ is essentially a harmonic Maass form of weight $1/2$ for suitable $v, a, $ and $b$; we illustrate this more precisely in the proof of Theorem \ref{thm_modvij}.      \end{remark}
  \begin{proof} 
  Here and throughout, we write $\tau = x + i y$.    We begin by establishing part (i).  We have that
  \begin{align}\notag \xi_{\frac12} \left(\widehat{M}_{a,b}(\tau)\right) &= \xi_{\frac12} \left( -\sqrt{2} e^{2\pi i a\left(b+\frac12\right)} q^{-\frac{a^2}{2}}  \mu (u,v;\tau) -\sqrt{2} e^{2\pi i a\left(b+\frac12\right)} q^{-\frac{a^2}{2}} \cdot \frac{i}{2} R(a\tau-b;\tau) \right) \\
  \notag &= 2 i y^{\frac 12} \overline{\frac{\partial}{\partial \overline{\tau}}\left(  -\sqrt{2} e^{2\pi i a\left(b+\frac12\right)} q^{-\frac{a^2}{2}} \cdot \frac{i}{2} R(a\tau-b;\tau)\right) }
 \\ \label{eqn_shad1}
  &= - \sqrt{2} y^{\frac12}e^{-2\pi i a \left(b+\frac12\right)} \overline{q^{-\frac{a^2}{2}} \frac{\partial}{\partial \overline{\tau}}    R(a\tau-b;\tau).}
  \end{align}
   It is shown in \cite[(1.5)]{zwegers} that
\begin{align}  \frac{\partial}{\partial \overline{\tau}}R (a\tau - b;\tau)    &= -\frac{i}{\sqrt{2 y}} e^{-2\pi a^2 y}\sum_{n\in \mathbb Z}(-1)^n \left(n+a+ \tfrac12\right) e^{-\pi i \left(n+  \frac12\right)^2x}e^{-\pi \left(n +\frac12\right)^2y}e^{ - 2\pi i \left(n+\frac12\right)\left(a x-b   \right)}e^{ - 2\pi   \left(n+\frac12\right)ay}. \label{eqn_xiR}
  \end{align}  Taking the conjugate of (\ref{eqn_xiR}) and $q^{-a^2/2}$, we find that (\ref{eqn_shad1}) becomes
  \begin{align*}  
  &\hspace{.1in}= -ie^{-2\pi i a \left(b+\frac12\right)} q^{\frac{a^2}{2}} \sum_{n\in \mathbb Z}(-1)^n \left(n+a+ \tfrac12\right) q^{\frac12 \left(n+ \frac12\right)^2} q^{  \left(n+\frac12\right)a }e^{ - 2\pi  i  \left(n+\frac12\right)b} \\
  &\hspace{.1in}=      \sum_{n\in \mathbb Z}  \left(n+a+ \tfrac12\right) q^{\frac12 \left(n+a+ \frac12\right)^2}  e^{ - 2\pi  i  \left(n+a+\frac12\right)\left(b + \frac12\right)}  = g_{a+\frac12,-b-\frac12}(\tau).
  \end{align*}
  Using the definition of $g_{a+\frac12,-b-\frac12}(\tau)$, it is not difficult to show that
 \begin{align}\label{eqn_3g} g_{a+\frac12,-b-\frac12}(\tau) = \overline{g_{a+\frac12,b+\frac12}(-\overline{\tau})} =  g^c_{a+\frac12,b+\frac12}(\tau).\end{align}  This proves part (i).

To prove part (ii), we use the fact that the weight $1/2$ Laplacian operator factors as $\Delta_{\frac12} = - \xi_{\frac32} {\xi_{\frac12}}$.  The result follows by applying $-\xi_{\frac32}$ to the expression given in part (i) of the Proposition, using (\ref{eqn_3g}).  \end{proof}

\subsection{Converting setting of Lemke Oliver to notation of Zwegers}

We now express the eta-theta functions from (\ref{e}) and (\ref{E}) in terms of  the theta  functions $\vartheta(v;\tau)$ and $g_{a,b}(\tau)$. We first observe that we can convert the sums over positive integers in the definitions of the  functions $E_m$  into sums over all integers, and then write the $E_m$ in terms of   the functions  $g_{a,b}$. For example, we see from the definition of $g_{a,b}(\tau)$ in (\ref{g}) that
\[
4g_{\frac{1}{4},0}(32\tau) = 4\sum_{n\in\Z}\left(n+\frac{1}{4}\right)q^{16\left(n+\frac{1}{4}\right)^2} = \sum_{n\in\Z}\left(4n+1\right)q^{\left(4n+1\right)^2}=E_1(\tau).
\]
By similar methods we find the following result.
 
\begin{lemma}\label{E-g}  For $\tau \in \mathbb H$, we have that
\begin{align} 
E_1(\tau) &= 4g_{\frac{1}{4},0}(32\tau) =\sum_{n\in\Z}(4n+1)q^{(4n+1)^2},\\
E_2(\tau) &= 4e^{\frac{-\pi i}{4}}g_{\frac{1}{4},\frac{1}{2}}(32\tau) =\sum_{n\in\Z}(-1)^n(4n+1)q^{(4n+1)^2}, \nonumber\\
E_3(\tau) &= 3g_{\frac{1}{3},0}(18\tau)   =\sum_{n\in\Z}(3n+1)q^{(3n+1)^2},\nonumber\\
E_4(\tau) &= 12e^{\frac{-\pi i}{12}}g_{\frac{1}{12},\frac{1}{2}}(288\tau) + 12e^{\frac{-5\pi i}{12}}g_{\frac{5}{12},\frac{1}{2}}(288\tau) \nonumber \\
&= \sum_{n\in\Z}(-1)^n(12n+1)q^{(12n+1)^2}  + \sum_{n\in\Z}(-1)^n(12n+5)q^{(12n+5)^2},\nonumber \\
E_5(\tau) &= 6g_{\frac{1}{6},0}(72\tau) =\sum_{n\in\Z}(6n+1)q^{(6n+1)^2},\nonumber\\
E_6(\tau) &= 3e^{\frac{-\pi i}{3}}g_{\frac{1}{3},\frac{1}{2}}(18\tau) =\sum_{n\in\Z}(-1)^n(3n+1)q^{(3n+1)^2}.\nonumber
\end{align}
\end{lemma}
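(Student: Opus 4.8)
The plan is to derive each identity in Lemma~\ref{E-g} by the elementary method already used for $E_1$, separating a purely formal completing-the-square computation from the number-theoretic input. The formal ingredient is that for integers $N\geq 1$, $0<r<N$, a sign $\epsilon=\pm1$, and $\beta\in\{0,1\}$, writing $Nn+r=N\bigl(n+\tfrac rN\bigr)$ and comparing with the definition (\ref{g}) of $g_{a,b}$ gives
\[
\epsilon\sum_{n\in\Z}(-1)^{\beta n}(Nn+r)\,q^{(Nn+r)^2}=\epsilon N\,e^{-2\pi i\frac{r}{N}\cdot\frac{\beta}{2}}\,g_{\frac rN,\frac\beta2}\!\left(2N^2\tau\right),
\]
since the factor $e^{2\pi i\frac\beta2(n+\frac rN)}=(-1)^{\beta n}e^{\pi i\beta r/N}$ contributes exactly the $n$-independent phase $e^{2\pi i\frac rN\frac\beta2}$. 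Thus it suffices to show that each $E_m$ is a sum of one or two series of the left-hand type.

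The number-theoretic step is to produce this expression for each $m$. Here I would use that every character $\chi$ appearing in (\ref{E}) is odd, so that $\chi(n)nq^{n^2}$ is unchanged under $n\mapsto -n$; consequently the sum $\sum_{n\geq1}\chi(n)nq^{n^2}$ extends to a sum over $n\in\Z$ along arithmetic progressions $Nn+r$, the terms with $n\leq -1$ reproducing the residue class $-r\equiv N-r$ with the correct sign supplied by $\chi(-r)=-\chi(r)$. Evaluating $\chi$ along a chosen progression then fixes $\beta$ (namely $\beta=0$ when $\chi$ is constant on that progression and $\beta=1$ when $\chi$ alternates) and the modulus $N$, after which the formal identity above yields the expression $Ne^{-2\pi i\frac rN\frac\beta2}\,g_{r/N,\beta/2}(2N^2\tau)$.

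Carrying this out case by case: for $E_1$ and $E_3$ the character is constant on the relevant class, giving a single $g$ with $\beta=0$; for $E_2$ and $E_4$ one works at a progression modulus below the conductor, so $\chi$ alternates and $\beta=1$, with $E_4$ needing the two progressions $12n+1$ and $12n+5$ and hence a sum of two $g$'s. For $E_5$ I would use $\leg{n}{12}=\leg{n}{3}$ for odd $n$ and $\leg{n}{12}=0$ for even $n$, which collapses the two $\pm$-pairs of residues modulo $12$ to the single pair $\pm1$ modulo $6$; and for $E_6$ I would first simplify $2\leg{n}{12}-\leg{n}{3}=(-1)^{n+1}\leg{n}{3}$, which is precisely what allows the single alternating progression $3n+1$. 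The main obstacle is not conceptual but is the careful evaluation of the composite Kronecker symbols $\leg{-2}{\cdot}$, $\leg{-6}{\cdot}$, and $\leg{\cdot}{12}$ on each residue class, together with verifying that the resulting sign patterns genuinely descend to the stated progression moduli and that the normalizing roots of unity $e^{-2\pi i\frac rN\frac\beta2}$ (for instance $e^{-\pi i/4}$ for $E_2$, and $e^{-\pi i/12}$, $e^{-5\pi i/12}$ for $E_4$) match (\ref{E}) exactly.
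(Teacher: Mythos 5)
Your proposal is correct and follows essentially the same route as the paper, which proves the lemma by rewriting each $E_m$ as a sum over $n\in\Z$ along arithmetic progressions and matching it to the definition (\ref{g}) of $g_{a,b}$ via the same completing-the-square identity (the paper carries this out explicitly only for $E_1$ and notes the rest follow "by similar methods"). Your systematic formal identity with parameters $N,r,\beta$ and the case-by-case character evaluations (including the simplifications $\leg{n}{12}=\leg{n}{3}$ for odd $n$ and $2\leg{n}{12}-\leg{n}{3}=(-1)^{n+1}\leg{n}{3}$) are exactly the details the paper leaves implicit, and they check out.
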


From Proposition \ref{prop_p1shadows}  and Lemma \ref{E-g},  we see that to construct forms $\widehat{M}_{a,b}$ whose   images under the $\xi_{\frac12}$-operator  
 are equal to a constant multiple of $E_m(\tau/k_m)$ for some suitable constants $k_m$
 we are only restricted by $u-v$ for $u,v\in \C\backslash (\Z\tau + \Z)$, not by $u$ and $v$ individually.  Since the theta function $\vartheta(v;\tau)$ appears as a prominent factor in the definition of $\widehat{\mu}$ from \eqref{eq:mu_tilde}, we again use the classification in \cite{LO} to restrict to those $\vartheta(v;\tau)$ which are eta-quotients of weight $1/2$ appearing in the list in (\ref{e}).

\begin{lemma}\label{thetas}   For $\tau \in \mathbb H$, we have that
\begin{align} 
\vartheta\left( \frac{\tau}{2} ; \tau\right) &= -iq^{-\frac{1}{8}}e_1\left(\frac{\tau}{2}\right), &
\vartheta\left( \frac{\tau}{4} ; \tau\right) &= -iq^{-\frac{1}{32}}e_5\left(\frac{\tau}{32}\right),  \\
\medskip
\vartheta\left( \frac{\tau}{2} - \frac{1}{2} ; \tau\right) &= q^{-\frac{1}{8}}e_2\left(\frac{\tau}{2}\right), \nonumber & \vartheta\left( \frac{\tau}{4} -\frac{1}{2} ; \tau\right) &= q^{-\frac{1}{32}}e_6\left(\frac{\tau}{32}\right),\\
\medskip
\vartheta\left( \frac{\tau}{3} ; \tau\right) &= -iq^{-\frac{1}{18}}e_3\left(\frac{\tau}{72}\right),\nonumber & \vartheta\left( \frac{\tau}{6} ; \tau\right) &= -iq^{-\frac{1}{72}}e_{7}\left(\frac{\tau}{18}\right), \nonumber \\
\medskip
\vartheta\left( \frac{\tau}{3} - \frac{1}{2} ; \tau\right) &= q^{-\frac{1}{18}}e_4\left(\frac{\tau}{72}\right), \nonumber & 
\vartheta\left( \frac{\tau}{6}  -\frac{1}{2} ; \tau\right) &= q^{-\frac{1}{72}}e_8\left(\frac{\tau}{18}\right). \nonumber
\medskip
\end{align}
\end{lemma}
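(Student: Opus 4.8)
The plan is to prove all eight identities directly from the product expansion \eqref{eq:theta} of $\vartheta(v;\tau)$: for each prescribed value of $v$ I would substitute, simplify the exponential prefactors into powers of $q$, collapse the three infinite products into a single product of factors $(1\mp q^{r})$ supported on prescribed residue classes, and finally renormalize that product into a quotient of Dedekind eta-functions to be matched against the definition of the relevant $e_n$ in \eqref{e}. The only tools needed are the two elementary facts that $\eta(m\tau) = q^{m/24}\prod_{n\geq 1}(1-q^{mn})$ from \eqref{eqn_etadef} (equivalently $\prod_{n\geq 1}(1-q^{mn}) = q^{-m/24}\eta(m\tau)$), and that $\prod_{n\geq 1}(1-q^{n/\ell})$ factors as a product over the residue classes modulo $\ell$.

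For the four \emph{unshifted} cases $v=\tau/\ell$ with $\ell\in\{2,3,4,6\}$ (producing $e_1,e_3,e_5,e_7$), I would insert $e^{2\pi i v}=q^{1/\ell}$ and $e^{-\pi i v}=q^{-1/(2\ell)}$ into \eqref{eq:theta}. The three product factors then become $\prod_{n\geq 1}(1-q^n)(1-q^{\,n-1+1/\ell})(1-q^{\,n-1/\ell})$, i.e.\ $(1-q^{r})$ over the integer class together with the classes $\pm 1/\ell \pmod 1$. For $\ell=3$ these collapse cleanly to $\prod_{n\geq 1}(1-q^{n/3})$, giving $\eta(\tau/3)=e_3(\tau/72)$ after renormalization; for $\ell=4$ they give $\prod_{n\geq1}(1-q^{n/4})\big/\prod_{n\geq1}(1-q^{\,n-1/2})$, which is exactly $e_5(\tau/32)$, and $\ell\in\{2,6\}$ yield analogous ratios. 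In each case the leftover powers of $q$ from the eta-normalization combine with the prefactor $-iq^{1/8}e^{-\pi i v}$ to produce precisely the asserted power of $q$ and the overall factor $-i$.

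For the four \emph{shifted} cases $v=\tau/\ell-\tfrac12$ (producing $e_2,e_4,e_6,e_8$), the same substitution gives $e^{2\pi i v}=-q^{1/\ell}$ and $e^{-\pi i v}=iq^{-1/(2\ell)}$, so the relevant factors flip to $(1+q^{\,n-1/\ell})$ and $(1+q^{\,n-1+1/\ell})$, and the extra factor of $i$ cancels the leading $-i$, which explains why these identities carry no $-i$ on the right. Products of the form $\prod(1+q^{r})$ would then be reduced to eta-quotients through $\prod_n(1+q^n)=\prod_n(1-q^{2n})/(1-q^n)$ and its fractional analogues, and matched against the more intricate eta-quotients defining $e_2,e_4,e_6,e_8$.

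The conceptual content is thus a single uniform computation, and the real work is entirely the residue-class and $q$-power bookkeeping, which is where the main obstacle lies: for the genuinely multi-factor eta-quotients $e_2,e_5,e_7,e_8$ one must correctly identify which half- and quarter-integer residue classes occur (and with what multiplicity) and verify that the accumulated fractional powers of $q$ collapse to the single asserted exponent. A convenient safeguard at each step is to compare the first few $q$-expansion coefficients of both sides against the theta-series representations already recorded in \eqref{e}; indeed one could prove each identity purely by matching those theta-series coefficients, but the product manipulation above is more uniform across all eight cases.
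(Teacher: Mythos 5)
Your proposal is correct and follows essentially the same route as the paper: the paper's proof also substitutes the prescribed $v$ into the product expansion \eqref{eq:theta}, collapses the resulting products into an eta-quotient, and matches it against the definitions in \eqref{e} (working out the $e_2$ case explicitly and noting the rest follow similarly). The only cosmetic difference is that the paper performs the computation with rescaled variables, e.g.\ $\vartheta\left(\tau-\tfrac12;2\tau\right)$, and substitutes $\tau \to \tau/2$ at the end, rather than working directly at $v=\tau/\ell-\tfrac12$.
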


\begin{proof}
Using \eqref{eq:theta}, we have that 
\begin{align*}
\vartheta\left(\tau-\frac{1}{2};2\tau\right) &= -iq^{\frac{1}{4}}e^{-\pi i (\tau - \frac{1}{2})}\prod_{n\geq 1}(1-q^{2n})(1-e^{2\pi i(\tau-\frac{1}{2})}q^{2n-2})(1-e^{-2\pi i(\tau-\frac{1}{2})}q^{2n})\\
&= q^{-\frac{1}{4}}\prod_{n\geq 1}(1-q^{2n})(1+q^{2n-1})^2 
= q^{-\frac{1}{4}}\prod_{n\geq 1}\frac{(1-q^{2n})(1+q^{n})^2}{(1+q^{2n})^2} \cdot \frac{(1-q^n)^2(1-q^{2n})^2}{(1-q^{2n})^2(1-q^n)^2} \\
&= q^{-\frac{1}{4}}\prod_{n\geq 1}\frac{(1-q^{2n})^5}{(1-q^n)^2(1-q^{4n})^2} = q^{-\frac{1}{4}} \frac{\eta(2\tau)^5}{\eta(\tau)^2\eta(4\tau)^2} =  q^{-\frac{1}{4}}e_2(\tau),\\
\end{align*}
which is the first identity above with $\tau \to \tau/2$.  The rest follow from similar arguments.
 \end{proof}

 Note that $\vartheta\left( \frac{\tau}{3} ; \tau - \frac{1}{2}\right) = e(-\frac{3}{8})q^{-\frac{1}{18}}e_{10}\left(\frac{\tau}{72}\right)$ and $\vartheta\left(\frac{\tau}{3} -1/2; \tau -\frac{1}{2}\right) = e(\frac{1}{8})q^{-\frac{1}{18}}e_{11}\left(\frac{\tau}{72}\right)$, which are not of the form $\vartheta(v; \tau)$.  Similarly, $e_{11}, e_{12}$, and $e_{13}$ cannot be written in the form $\vartheta(v;\tau)$.  Hence, we restrict our constructions to the first eight $e_n$ functions.

\section{Eta-theta functions and mock modular forms}\label{constructingV}

We are now ready to construct our families of mock theta functions.  For each weight $3/2$ theta function $E_m$ we construct eight corresponding functions $V_{mn}$, one for each weight $1/2$ theta function $e_{n}$.  However, in some cases the $V_{mn}$ are degenerate due the presence of poles.  Here, we will focus on the construction of $V_{11}$ as the other constructions follow similarly.   Our goal for $V_{11}$ is to construct a   function that has shadow associated to $E_1$, and the factor $e_1$ in its series representation.  

First, we observe from Lemma \ref{E-g} that $E_1(\tau) = 4g_{\frac{1}{4},0}(32\tau)$.  Thus, we make the change $\tau \mapsto 32\tau$ and consider a function of the form $\mu(u,v;32\tau)$ as in \eqref{series}.  We choose $v$ so that the theta function $\vartheta(v;32\tau)$ appearing in \eqref{series} is  in terms of   $e_1$.  By Lemma \ref{thetas} we see that we should choose $v= 16\tau$ so that  $\vartheta\left( 16\tau; 32\tau\right) = -iq^{-4}e_1\left(16\tau\right)$.  By Proposition \ref{prop_p1shadows} the corresponding function $M_{-\frac{1}4,-\frac{1}2}(32\tau)$ has shadow related to $g_{\frac14, 0}(32\tau)$, so long as
$$u-16\tau = u-v= -\frac{1}4(32\tau)-\left(-\tfrac{1}2\right) = -8\tau+\tfrac12.$$
Thus we choose $u= 8\tau+1/2$, and calculate the series form of $-q^{-1}\mu(8\tau+1/2, 16\tau; 32\tau)$ using \eqref{series}.  Our final step is to renormalize with $\tau \mapsto \tau/32$.  We obtain  
\[
V_{11}(\tau) = \frac{q^{-9/32}}{e_1(\tau/2)}\sum_{n\in\Z}\frac{(-1)^nq^{(n+1)^2/2}}{1+q^{n+1/4}}=w_1 q^{t_1} \mu \left( u^{(11)}_\tau,v^{(11)}_\tau;\tau \right) := -q^{-1/32}\mu\left(\frac{\tau}{4}+\frac{1}{2}, \frac{\tau}{2}; \tau\right).
\]

We repeat this process for each of the remaining $e_n$, to find $V_{1n}$ as above with shadow related to $E_1$ and with the theta function $e_n$ as a factor in its series representation.   We find that the construction of $V_{16}$ leads to the choice $u= 0 \in \C\backslash (32\Z\tau+\Z)$, and so this fails to produce a mock modular form due to the existence   of poles.    Each of the $V_{1n}$ are fully listed in the Appendix.

We repeat this entire process for each $E_m$ with $m\in\{2,3,5,6\}$.  The case $E_4$ requires some additional care as $E_4(\tau) = 12e^{\frac{-\pi i}{12}}g_{\frac{1}{12},\frac{1}{2}}(288\tau) + 12e^{\frac{-5\pi i}{12}}g_{\frac{5}{12},\frac{1}{2}}(288\tau)$.  In this case we build two different forms $V_{4'n}$ and $V_{4''n}$, one for each $g_{a,b} $ function, using the process described above.  We then add these forms to create our desired mock theta function.  For example,  \begin{align*}
V_{41}(\tau) = V_{4'1}(\tau) + V_{4''1}(\tau)
&=\frac{-q^{-121/288}}{e_1(\tau/2)}\sum_{n\in\Z}\frac{(-1)^nq^{(n+1)^2/2}}{1-q^{n+1/12}}
 +\frac{-q^{-49/288}}{e_1(\tau/2)}\sum_{n\in\Z}\frac{(-1)^nq^{(n+1)^2/2}}{1-q^{n+5/12}},
 \\&=iq^{-25/288}\mu\left(\frac{\tau}{12}, \frac{\tau}{2}; \tau\right)+iq^{-1/288}\mu\left(\frac{5\tau}{12}, \frac{\tau}{2}; \tau\right).
\end{align*}
All of the $V_{mn}$, including the $V_{4n}$, are listed in the Appendix.

\begin{remark}
Using Lemma \ref{lem_mu}, we see  \[V_{57}(\tau)=-q^{-1/18}\mu\left(-\frac{\tau}{6}+\frac{1}{2}, \frac{\tau}{6}; \tau\right) = -q^{-1/18}\mu\left(\frac{\tau}{6},-\frac{\tau}{6}+\frac{1}{2}; \tau\right) = -q^{-1/18}\mu\left(-\frac{\tau}{6},\frac{\tau}{6}-\frac{1}{2}; \tau\right) =V_{58}(\tau).\]
A comparison of coefficients reveals that all other series are unique.
\end{remark}

\begin{remark}
A coefficient search on the On-Line Encyclopedia of Integer Sequences suggests the following equalities: 
\[ -q^{1/24}V_{41}(12\tau) = \psi(q), \quad q^{1/24}V_{58}(3\tau)=\chi(q), \quad q^{-2/3}V_{64}(6\tau)=\rho(q),\]
\[ -q^{1/8}V_{21}(4\tau)=A(q), \quad -q^{1/8}V_{12}(4\tau) = U_1(q), \quad 2q^{1/8}V_{15}(4\tau)=U_0(q),\] where $\psi(q)$, $\chi(q)$, and $\rho(q)$ are Ramanujan's third order mock thetas, $A(q)$ is Ramanujan's second order mock theta, and
$U_1(q)$ and $U_0(q)$ are Gordon and McIntosh's eighth order mock thetas.  These series are defined in \cite{Ram} and \cite{GM}.  The latter three equalities follow from the definitions in \cite{GM} and Lemma \ref{lem_mu}.
\end{remark}

\subsection{Proof of Theorem \ref{thm_modvij}}
First, we wish to establish the mock modularity of the 51 functions $V_{mn}$ for admissible $(m,n)$ when $m\in T' \setminus \{4\}$. We will make use of Proposition \ref{prop_p1shadows}, but must also establish the modular transformation properties of these functions.   For such pairs $(m,n)$,  the functions $V_{mn}$,  as summarized in the Appendix, may be  expressed in terms of the $\mu$-function, and parameters $w_m, t_m, u^{(mn)}_\tau, v^{(mn)}_\tau$ as
\begin{align} \label{eq_Vnotation}
V_{mn}(\tau) &= w_m q^{t_m} \mu(u^{(mn)}_\tau,v^{(mn)}_\tau;\tau). \end{align} We denote their completions by 
\begin{align} \label{eq_Vnotation2}
\widehat{V}_{mn}(\tau) &:= w_m q^{t_m} \widehat{\mu}(u^{(mn)}_\tau,v^{(mn)}_\tau;\tau).
\end{align}  When $m=4$, for any admissible $n$, we consider
$$V_{4n}(\tau) = V_{4'n}(\tau) + V_{4''n}(\tau),$$ and the completions $$\widehat{V}_{4n}(\tau) :=  \widehat{V}_{4'n}(\tau) + \widehat{V}_{4''n}(\tau).$$   Toward establishing their modular properties,  we establish the following preliminary lemmas, the first of which follows directly from Lemma \ref{lem_muhat}.   Throughout, for  $x_{\tau} \in \mathbb C \setminus (\mathbb Z \tau +  \mathbb Z)$, we define for any $\gamma = \sm{a}{b}{c}{d} \in \textnormal{SL}_2(\mathbb Z)$,
\[
\widetilde{x}_{\gamma, \tau} := x_{\gamma \tau} \cdot (c\tau+d).
\]

 \begin{lemma} \label{lem_transhat}Let $\gamma = \sm{a}{b}{c}{d} \in \textnormal{SL}_2(\mathbb Z)$, $\tau \in \mathbb H$, and $ u_\tau,  v_\tau \in \mathbb C \setminus (\mathbb Z \tau +  \mathbb Z).$   Suppose $\widetilde{u}_{\gamma,\tau} = u_{\tau} + k_\gamma\cdot \tau + \ell_\gamma,$ and $\widetilde{v}_{\gamma,\tau} = v_{\tau} + r_\gamma\cdot \tau  + s_\gamma$, for some integers $k_\gamma, \ell_\gamma, r_\gamma, s_\gamma$.  Then we have that
\begin{align*} \widehat{\mu}&\left(u_{\gamma \tau}, v_{\gamma \tau}; \gamma \tau\right)  = \widehat{\mu}\left(\frac{\widetilde{u}_{\gamma,\tau}}{c\tau+d},\frac{\widetilde{v}_{\gamma,\tau}}{c\tau+d}; \gamma \tau\right) \\ &\hspace{.2in}= \psi(\gamma)^{-3}(-1)^{k_\gamma+\ell_\gamma+r_\gamma+s_\gamma}(c\tau+d)^{\frac12} q^{\frac{(k_\gamma-r_\gamma)^2}{2}} e\left(\frac{-c(\widetilde{u}_{\gamma,\tau}-\widetilde{v}_{\gamma,\tau})^2}{2(c\tau+d)}  + (k_\gamma-r_\gamma)(u_\tau-v_\tau)\right)\widehat{\mu}(u_\tau,v_\tau;\tau).\end{align*}
\end{lemma}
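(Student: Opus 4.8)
The plan is to reduce everything to Lemma \ref{lem_muhat}(2), the modular transformation of $\widehat{\mu}$, by rewriting the arguments at $\gamma\tau$ in a form to which that lemma applies. First I would record the trivial but essential identity
\[
u_{\gamma\tau} = \frac{u_{\gamma\tau}\cdot(c\tau+d)}{c\tau+d} = \frac{\widetilde{u}_{\gamma,\tau}}{c\tau+d},
\]
and similarly for $v$, which justifies the first equality in the statement and sets up the input $\widehat{\mu}\bigl(\tfrac{\widetilde{u}_{\gamma,\tau}}{c\tau+d},\tfrac{\widetilde{v}_{\gamma,\tau}}{c\tau+d};\gamma\tau\bigr)$ for the transformation law. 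Since $\gamma\tau = \tfrac{a\tau+b}{c\tau+d}$, Lemma \ref{lem_muhat}(2) applies directly with $u$ replaced by $\widetilde{u}_{\gamma,\tau}$ and $v$ replaced by $\widetilde{v}_{\gamma,\tau}$, yielding
\[
\widehat{\mu}\left(\frac{\widetilde{u}_{\gamma,\tau}}{c\tau+d},\frac{\widetilde{v}_{\gamma,\tau}}{c\tau+d};\gamma\tau\right) = \psi(\gamma)^{-3}(c\tau+d)^{\frac12} e\!\left(\frac{-c(\widetilde{u}_{\gamma,\tau}-\widetilde{v}_{\gamma,\tau})^2}{2(c\tau+d)}\right)\widehat{\mu}(\widetilde{u}_{\gamma,\tau},\widetilde{v}_{\gamma,\tau};\tau),
\]
where I have written the exponential factor $e^{-\pi i c(u-v)^2/(c\tau+d)}$ in the paper's $e(\cdot)$-notation and used $v(\gamma)=\psi(\gamma)$.

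The remaining work is to simplify $\widehat{\mu}(\widetilde{u}_{\gamma,\tau},\widetilde{v}_{\gamma,\tau};\tau)$ using the hypotheses $\widetilde{u}_{\gamma,\tau} = u_\tau + k_\gamma\tau + \ell_\gamma$ and $\widetilde{v}_{\gamma,\tau} = v_\tau + r_\gamma\tau + s_\gamma$. Here I would invoke the elliptic transformation Lemma \ref{lem_muhat}(1) with $(k,l,m,n) = (k_\gamma,\ell_\gamma,r_\gamma,s_\gamma)$, which gives
\[
\widehat{\mu}(\widetilde{u}_{\gamma,\tau},\widetilde{v}_{\gamma,\tau};\tau) = (-1)^{k_\gamma+\ell_\gamma+r_\gamma+s_\gamma}\, e\!\left(\frac{(k_\gamma-r_\gamma)^2}{2}\tau + (k_\gamma-r_\gamma)(u_\tau-v_\tau)\right)\widehat{\mu}(u_\tau,v_\tau;\tau).
\]
Substituting this into the previous display and collecting the sign, the $(c\tau+d)^{1/2}$, the two exponential contributions, and rewriting $e\bigl(\tfrac{(k_\gamma-r_\gamma)^2}{2}\tau\bigr) = q^{(k_\gamma-r_\gamma)^2/2}$ produces exactly the claimed formula. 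The main thing to check carefully is that the two exponential factors combine correctly and that the argument $\widetilde{u}_{\gamma,\tau}-\widetilde{v}_{\gamma,\tau}$ in the first factor is the one coming from Lemma \ref{lem_muhat}(2) rather than $u_\tau-v_\tau$; these differ by a lattice translate, so I would keep them as the tilded quantities in the final expression as stated.

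I do not expect a genuine obstacle here, since the lemma is described as following \emph{directly} from Lemma \ref{lem_muhat}; the only delicate point is bookkeeping. Specifically, I would take care to match the sign conventions in the exponent of $e(\cdot)$ between the $-\pi i c(u-v)^2/(c\tau+d)$ factor of Lemma \ref{lem_muhat}(2) and the $+\pi i (k-m)^2\tau$ factor of Lemma \ref{lem_muhat}(1), and to verify that no spurious cross terms arise when the quadratic exponential from the modular transformation is written in terms of $\widetilde{u}_{\gamma,\tau}-\widetilde{v}_{\gamma,\tau}$ while the linear exponential from the elliptic transformation is written in terms of $u_\tau-v_\tau$. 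Once these are aligned, the proof is a one-line composition of the two parts of Lemma \ref{lem_muhat}.
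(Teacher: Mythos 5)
Your proof is correct and is precisely the argument the paper intends: the paper states this lemma with no written proof beyond noting it "follows directly from Lemma \ref{lem_muhat}," and your composition of Lemma \ref{lem_muhat}(2) (applied at $\widetilde{u}_{\gamma,\tau},\widetilde{v}_{\gamma,\tau}$, using the identity $u_{\gamma\tau}=\widetilde{u}_{\gamma,\tau}/(c\tau+d)$) followed by Lemma \ref{lem_muhat}(1) (with $(k,l,m,n)=(k_\gamma,\ell_\gamma,r_\gamma,s_\gamma)$) is exactly that derivation, with the exponential factors correctly kept in terms of the tilded difference and $u_\tau-v_\tau$ respectively.
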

\qed \ \\

We next establish two technical lemmas, Lemma \ref{lem_invuv} and Lemma \ref{lem_integers}, which will allow us to efficiently establish the mock modularity of our functions $V_{mn}$, when combined with Lemma \ref{lem_transhat} and Proposition \ref{prop_p1shadows} above.  
\begin{lemma} \label{lem_invuv}   Let $\gamma = \sm{a}{b}{c}{d} \in \textnormal{SL}_2(\mathbb Z)$,  $\tau \in \mathbb H, j\in \{1,2\},$ and $u_{\tau}^{(j)}, v_\tau^{(j)} \in \mathbb C \setminus (\mathbb Z \tau +  \mathbb Z).$ Suppose there exist constants $ k_{\gamma}^{(j)},  \ell_{\gamma}^{(j)},   r_{\gamma}^{(j)},  s_{\gamma}^{(j)} \in \mathbb R$ satisfying $\widetilde{u}_{\gamma,\tau}^{(j)}  = u^{(j)}_{\tau} + k_{\gamma}^{(j)} \cdot \tau + \ell_{\gamma}^{(j)},$ and $\widetilde{v}_{\gamma,\tau}^{(j)}  = v^{(j)}_{\tau} + r_{\gamma}^{(j)}\cdot\tau  + s_{\gamma}^{(j)}$.   Further, define  the difference functions \begin{align*}& d_{\tau}^{(j)} := u^{(j)}_{\tau} - v^{(j)}_{\tau},  &  \widetilde{d}^{(j)}_{\gamma,\tau} &:= \widetilde{u}_{\gamma,\tau}^{(j)} - \widetilde{v}_{\gamma,\tau}^{(j)}, \\
& \delta^{(j)}_{\gamma} : =   k_{\gamma}^{(j)}  - r_{\gamma}^{(j)}, &\rho^{(j)}_{\gamma}& := \ell_{\gamma}^{(j)} - s^{(j)}_{\gamma}.\end{align*}
Suppose that $d^{(1)}_{\tau} = d^{(2)}_{\tau}$.  Then we have that \begin{align*}
\widetilde{d}^{(1)}_{\gamma,\tau} =  \widetilde{d}^{(2)}_{\gamma,\tau},  \ \  \ \
 {\delta}^{(1)}_{\gamma}  = \delta^{(2)}_\gamma, \ \ {\text{and \ \ }} \rho^{(1)}_\gamma  = \rho^{(2)}_\gamma. \end{align*}
\end{lemma}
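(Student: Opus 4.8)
The plan is to reduce every assertion to the single difference function $d_\tau^{(j)} = u_\tau^{(j)} - v_\tau^{(j)}$ and then exploit that, as functions of $\tau \in \mathbb H$, the maps $\tau \mapsto 1$ and $\tau \mapsto \tau$ are $\mathbb R$-linearly independent, so that an affine identity $\alpha\tau + \beta = 0$ valid for all $\tau$ forces $\alpha = \beta = 0$. Recall from the definition $\widetilde{x}_{\gamma,\tau} := x_{\gamma\tau}(c\tau+d)$ that $\widetilde{u}^{(j)}$ and $\widetilde{v}^{(j)}$ are obtained by evaluating the underlying functions at $\gamma\tau$ and scaling by $(c\tau+d)$; the whole content of the lemma is that all three claimed equalities are insensitive to any change of $u^{(j)}$ and $v^{(j)}$ that preserves their difference.

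First I would establish $\widetilde{d}^{(1)}_{\gamma,\tau} = \widetilde{d}^{(2)}_{\gamma,\tau}$. By definition $\widetilde{d}^{(j)}_{\gamma,\tau} = \widetilde{u}^{(j)}_{\gamma,\tau} - \widetilde{v}^{(j)}_{\gamma,\tau} = \big(u^{(j)}_{\gamma\tau} - v^{(j)}_{\gamma\tau}\big)(c\tau+d) = d^{(j)}_{\gamma\tau}(c\tau+d)$. Since the hypothesis $d^{(1)}_\tau = d^{(2)}_\tau$ is an identity of functions of $\tau$, substituting $\gamma\tau \in \mathbb H$ for $\tau$ gives $d^{(1)}_{\gamma\tau} = d^{(2)}_{\gamma\tau}$, and multiplying by the common factor $(c\tau+d)$ yields the first claim.

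Next I would isolate $\delta^{(j)}_\gamma$ and $\rho^{(j)}_\gamma$. Subtracting the two defining relations $\widetilde{u}^{(j)}_{\gamma,\tau} = u^{(j)}_\tau + k^{(j)}_\gamma \tau + \ell^{(j)}_\gamma$ and $\widetilde{v}^{(j)}_{\gamma,\tau} = v^{(j)}_\tau + r^{(j)}_\gamma \tau + s^{(j)}_\gamma$ gives $\widetilde{d}^{(j)}_{\gamma,\tau} - d^{(j)}_\tau = \delta^{(j)}_\gamma \tau + \rho^{(j)}_\gamma$. The left-hand sides for $j=1$ and $j=2$ now agree, by the first claim together with $d^{(1)}_\tau = d^{(2)}_\tau$; hence $\delta^{(1)}_\gamma \tau + \rho^{(1)}_\gamma = \delta^{(2)}_\gamma \tau + \rho^{(2)}_\gamma$ for all $\tau \in \mathbb H$. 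Because $\delta^{(j)}_\gamma$ and $\rho^{(j)}_\gamma$ are constants independent of $\tau$, rewriting this as $(\delta^{(1)}_\gamma - \delta^{(2)}_\gamma)\tau + (\rho^{(1)}_\gamma - \rho^{(2)}_\gamma) = 0$ for all $\tau$ and invoking the linear independence of $1$ and $\tau$ forces $\delta^{(1)}_\gamma = \delta^{(2)}_\gamma$ and $\rho^{(1)}_\gamma = \rho^{(2)}_\gamma$, completing the proof.

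Since the computation is elementary, there is no genuine obstacle; the only points requiring care are logical rather than technical. One must read $d^{(1)}_\tau = d^{(2)}_\tau$ as an identity in $\tau$ (so that it may legitimately be re-evaluated at $\gamma\tau$) rather than at a single point, and one must confirm that $\delta^{(j)}_\gamma, \rho^{(j)}_\gamma$ are genuinely $\tau$-independent scalars before equating coefficients of the affine relation. Both hold automatically in the intended application, where each $u^{(j)}_\tau, v^{(j)}_\tau$ is an explicit affine-linear function of $\tau$ whose common difference is dictated by the fixed shadow via Proposition \ref{prop_p1shadows}.
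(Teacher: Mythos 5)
Your proof is correct and follows essentially the same route as the paper's: both derive the first equality from the definition $\widetilde{x}_{\gamma,\tau} = x_{\gamma\tau}\cdot(c\tau+d)$ together with the hypothesis $d^{(1)}_\tau = d^{(2)}_\tau$ read as an identity in $\tau$, then subtract the defining relations to obtain $\widetilde{d}^{(j)}_{\gamma,\tau} = d^{(j)}_\tau + \delta^{(j)}_\gamma\tau + \rho^{(j)}_\gamma$ and equate the resulting affine expressions. The only cosmetic difference is the final step: you invoke linear independence of the functions $1$ and $\tau$ on $\mathbb H$, whereas the paper observes that since $\delta^{(j)}_\gamma,\rho^{(j)}_\gamma$ are real constants and $\tau\in\mathbb H$ is non-real, the relation at a single $\tau$ already forces the coefficients to agree.
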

\begin{proof}[Proof of Lemma \ref{lem_invuv}]
The first assertion follows from the fact that $d_\tau^{(1)} = d_\tau^{(2)},$ that $c\tau + d \neq 0$, and the definitions of $\widetilde{u}_{\gamma,\tau}^{(j)}$ and $\widetilde{v}_{\gamma,\tau}^{(j)}$.  To prove the second and third assertions,  we have  that $\widetilde{u}_{\gamma,\tau}^{(j)} = u_\tau^{(j)} + k_\gamma^{(j)}\cdot \tau  + \ell_\gamma^{(j)}$ and $\widetilde{v}_{\gamma,\tau}^{(j)} = v_\tau^{(j)} + r_\gamma^{(j)}\cdot \tau  + s_\gamma^{(j)}$.  Subtracting the second of these equalities  from the first, we have that  $\widetilde{d}_{\gamma,\tau}^{(j)}  = d^{(j)}_\tau + \delta^{(j)}_\gamma  \cdot \tau +\rho^{(j)}_\gamma  $.
But $d_\tau^{(1)}  = d_\tau^{(2)} $ and $ \widetilde{d}_{\gamma,\tau}^{(1)}  =  \widetilde{d}_{\gamma,\tau}^{(2)},$ which implies that
$ \delta^{(1)}_\gamma  \cdot \tau +\rho^{(1)}_\gamma =  \delta^{(2)}_\gamma  \cdot \tau +\rho^{(2)}_\gamma   $.  The second and third assertions now follow, using the fact that $\delta^{(j)}_\gamma$ and $\rho^{(j)}_\gamma$ are constants in $\mathbb R$, and $\tau \in \mathbb H$.
\end{proof}
 In order to utilize Lemma \ref{lem_transhat} to  determine the modular  transformation properties for the functions $V_{mn}$, we need to know for which $\gamma = \sm{a}{b}{c}{d} \in \textnormal{SL}_2(\mathbb Z)$ we have that $\widetilde{u}^{(mn)}_{\gamma,\tau} - u^{(mn)}_{\tau}\in \mathbb Z \tau +  \mathbb Z$, and $\widetilde{v}^{(mn)}_{\gamma,\tau} - v^{(mn)}_{\tau} \in \mathbb Z \tau +  \mathbb Z$.  We note the following lemma, which follows directly    by using the definition  of   $\widetilde{x}_{\gamma,\tau}.$ \begin{lemma}\label{lem_integers}
Let $x_{\tau} \in \mathbb C \setminus (\mathbb Z \tau +  \mathbb Z)$ be of the form
\[
x_{\tau} = \frac{\alpha \tau + \beta}{N},
\]
where $N\in\mathbb{N}$, and $1\leq \alpha, \beta \leq N-1$.  For fixed $\gamma = \sm{a}{b}{c}{d} \in \textnormal{SL}_2(\mathbb Z)$, we have that $\widetilde{x}_{\gamma, \tau} - x_{\tau} \in  \mathbb Z \tau +  \mathbb Z$ if and only if the following congruences hold
\begin{align*}
\alpha a + \beta c &\equiv \alpha \pmod{N}\\
\alpha b + \beta d &\equiv \beta \pmod{N}.
\end{align*}
\end{lemma}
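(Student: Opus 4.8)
The plan is to compute $\widetilde{x}_{\gamma,\tau}$ in closed form and then reduce the membership condition $\widetilde{x}_{\gamma,\tau} - x_\tau \in \mathbb{Z}\tau + \mathbb{Z}$ to a pair of divisibility conditions by invoking the linear independence of $1$ and $\tau$ over $\mathbb{Q}$.

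First I would substitute the M\"obius action $\gamma\tau = (a\tau+b)/(c\tau+d)$ into $x_{\gamma\tau} = (\alpha\,\gamma\tau + \beta)/N$ and multiply through by $c\tau+d$, as dictated by the definition $\widetilde{x}_{\gamma,\tau} := x_{\gamma\tau}(c\tau+d)$. The factor $c\tau+d$ cancels the denominator introduced by the M\"obius action, leaving the clean expression
$$\widetilde{x}_{\gamma,\tau} = \frac{\alpha(a\tau+b) + \beta(c\tau+d)}{N} = \frac{(\alpha a + \beta c)\tau + (\alpha b + \beta d)}{N}.$$
Next I would subtract $x_\tau = (\alpha\tau+\beta)/N$ to obtain
$$\widetilde{x}_{\gamma,\tau} - x_\tau = \frac{(\alpha a + \beta c - \alpha)\tau + (\alpha b + \beta d - \beta)}{N},$$
whose numerator has integer coefficients because $\alpha,\beta,a,b,c,d \in \mathbb{Z}$.

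Finally, since $\tau \in \mathbb{H}$ is non-real, $1$ and $\tau$ are linearly independent over $\mathbb{Q}$; hence any expression of the form $(A\tau + B)/N$ with $A,B \in \mathbb{Z}$ lies in $\mathbb{Z}\tau + \mathbb{Z}$ if and only if $N \mid A$ and $N \mid B$. Applying this with $A = \alpha a + \beta c - \alpha$ and $B = \alpha b + \beta d - \beta$ yields precisely the two congruences $\alpha a + \beta c \equiv \alpha$ and $\alpha b + \beta d \equiv \beta \pmod{N}$, which completes the argument.

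I do not anticipate a genuine obstacle here: the computation is elementary, and the only point meriting care is the linear independence step, which is immediate from $\textnormal{Im}(\tau) > 0$. It is worth noting that the normalizing hypotheses $1 \leq \alpha,\beta \leq N-1$ and $x_\tau \notin \mathbb{Z}\tau+\mathbb{Z}$ play no role in the equivalence itself; they serve only to ensure that $x_\tau$ is a genuine torsion point of the intended order, so that the lemma is applied in the setting relevant to the functions $V_{mn}$.
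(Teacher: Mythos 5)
Your proposal is correct and is precisely the argument the paper intends: the paper states that the lemma ``follows directly by using the definition of $\widetilde{x}_{\gamma,\tau}$,'' and your computation — cancelling $c\tau+d$ to get $\widetilde{x}_{\gamma,\tau} = ((\alpha a+\beta c)\tau + (\alpha b + \beta d))/N$, subtracting $x_\tau$, and invoking the linear independence of $1$ and $\tau$ to convert membership in $\mathbb{Z}\tau+\mathbb{Z}$ into the two congruences — is exactly that direct verification, spelled out. Your closing observation that the normalizing hypotheses on $\alpha,\beta$ are not needed for the equivalence itself is also accurate.
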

\hfill \qed

The following corollary follows directly from Lemma \ref{lem_integers}.
\begin{corollary}\label{cor_helper}
In the context of the above lemma, when $\alpha=0$, and $\beta$ is relatively prime to $N$, then $\widetilde{x}_{\gamma, \tau}  - x_{\tau}   \in  \mathbb Z \tau +  \mathbb Z$ if and only if  $\gamma\in\G_1(N)$.   Similarly, if $\beta=0$, and $\alpha$ is relatively prime to $N$, then $\widetilde{x}_{\gamma, \tau} - x_{\tau}   \in  \mathbb Z \tau +  \mathbb Z$ if and only if   $\gamma\in\G^1(N)$. 
\end{corollary}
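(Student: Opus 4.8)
The plan is to read off the corollary directly from the congruence characterization in Lemma~\ref{lem_integers} by specializing to the two degenerate values of $(\alpha,\beta)$ and then cleaning up the congruences with the coprimality hypotheses and the determinant relation $ad-bc=1$. First I would treat the case $\alpha=0$ (so $x_\tau=\beta/N$). Substituting $\alpha=0$ into the two congruences of Lemma~\ref{lem_integers}, the first becomes $\beta c\equiv 0\pmod N$ and the second becomes $\beta(d-1)\equiv 0\pmod N$. Since $\gcd(\beta,N)=1$, the residue $\beta$ is a unit modulo $N$ and may be cancelled, so that $\widetilde{x}_{\gamma,\tau}-x_\tau\in\Z\tau+\Z$ is equivalent to $c\equiv 0\pmod N$ and $d\equiv 1\pmod N$.

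To identify this with membership in $\G_1(N)$ I still need $a\equiv 1\pmod N$, and here the determinant relation enters: from $c\equiv 0\pmod N$ we get $bc\equiv 0\pmod N$, whence $a\equiv ad\equiv ad-bc=1\pmod N$ using $d\equiv 1\pmod N$. Thus the condition is equivalent to $a\equiv d\equiv 1\pmod N$ together with $c\equiv 0\pmod N$, which is exactly the defining property of $\G_1(N)$; the converse inclusion is immediate since any $\gamma\in\G_1(N)$ satisfies the original two congruences. The case $\beta=0$ (so $x_\tau=\alpha\tau/N$) would proceed symmetrically: the congruences collapse to $\alpha(a-1)\equiv 0\pmod N$ and $\alpha b\equiv 0\pmod N$, cancelling the unit $\alpha$ yields $a\equiv 1\pmod N$ and $b\equiv 0\pmod N$, and then $b\equiv 0\pmod N$ forces $d\equiv ad\equiv ad-bc=1\pmod N$, matching the definition of $\G^1(N)$.

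I expect no genuine obstacle here, since the whole argument is a short cancellation. The only point requiring a moment of care is that the two congruences furnished by Lemma~\ref{lem_integers} directly yield only two of the three congruence conditions defining $\G_1(N)$ (respectively $\G^1(N)$); the missing one ($a\equiv 1$ in the first case, $d\equiv 1$ in the second) is not among them and must be recovered by invoking $ad-bc=1$ rather than the congruences alone.
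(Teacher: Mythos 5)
Your proof is correct and takes exactly the route the paper intends: the paper offers no written proof for this corollary, marking it as an immediate consequence of Lemma \ref{lem_integers}, and your argument is precisely the specialization of those congruences with cancellation of the unit $\beta$ (resp.\ $\alpha$) modulo $N$. Your observation that the third congruence defining $\Gamma_1(N)$ (resp.\ $\Gamma^1(N)$) must be recovered from $ad-bc=1$ is the one genuinely non-trivial detail the paper suppresses, and you handle it correctly.
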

\hfill \qed

In Table \ref{Atable}, we list the congruence subgroups $A_{mn}$ for each mock modular form $V_{mn}$ in Theorem 1.1. These are computed using Lemma \ref{lem_integers} and Corollary \ref{cor_helper}, and are used in the proof of Theorem \ref{thm_modvij} below.

\begin{table}[h] 
\caption{congruence subgroups $A_{mn}$ for each mock modular form $V_{mn}$}\label{Atable}
\[
\begin{array}{|c|c|c|c|c|c|c|}
\hline
n\backslash m&1&2&3&4,4',4''&5&6\\
\hline
1 &\G^1(4)\cap \G_0(2)&\G^1(4)&\G^1(6)\cap \G_0(2) &\G^1(12)& \G^1(6)\cap \G_0(2) & \G^1(6)\\
\hline
2 &\G^1(4)\cap \G_0(2)&\G^1(4)\cap\G_0(2)& \G^1(6)\cap \G_0(2)&\G^1(12)\cap \G_0(2) & \G^1(6)\cap \G_0(2)& \G^1(6)\cap \G_0(2) \\
\hline
3 &\G^1(12)\cap \G_0(2)&\G^1(12)& \G^1(6)\cap \G_0(2)&\G^1(12)&\G^1(3)\cap \G_0(2) & \G^1(6)\\
\hline
4 &\G^1(12)\cap \G_0(2)&\G^1(12)\cap \G_0(2)&\G^1(6)\cap \G_0(2) &\G^1(12)\cap \G_0(2)&--& \G^1(6)\cap \G_0(2) \\
\hline
5 &\G^1(4)\cap \G_0(2)&--& \G^1(12)\cap \G_0(2) &\G^1(12)&\G^1(12)\cap \G_0(2)& \G^1(12) \\
\hline
6 &--& \G^1(4)\cap \G_0(2)& \G^1(12)\cap \G_0(2) &\G^1(12)\cap \G_0(2)& \G^1(12)\cap \G_0(2) & \G^1(12)\cap \G_0(2) \\
\hline
7 &\G^1(12)\cap \G_0(2)&\G^1(12)& \G^1(6)\cap \G_0(2) &\G^1(12)& \G^1(6)\cap \G_0(2) &--\\
\hline
8 &\G^1(12)\cap \G_0(2)& \G^1(12)\cap \G_0(2)&--&\G^1(12)\cap \G_0(2) & \G^1(6)\cap \G_0(2) & \G^1(6)\cap \G_0(2) \\
\hline
\end{array}
\]
\end{table}

\begin{proof}[Proof of Theorem \ref{thm_modvij}]
 We first consider the functions $V_{mn}$ where $m\in T'\setminus\{4\}$.  After doing so, we will  address the more delicate case of $m=4$. We begin by considering (for $m\in T'\setminus\{4\}$) the defining parameters $u_\tau^{(mn)}$ and $v_\tau^{(mn)}$ from the Appendix, as well as their associated values  $\widetilde{u}^{(mn)}_{\gamma,\tau}$ and $\widetilde{v}^{(mn)}_{\gamma,\tau}$,   where $\gamma = \sm{a}{b}{c}{d} \in \textnormal{SL}_2(\mathbb Z)$.  For $\gamma \in A_{mn}$ as  defined  in Table \ref{Atable}, we may write
\begin{align*} \widetilde{u}^{(mn)}_{\gamma,\tau} &= u^{(mn)}_\tau+ k^{(mn)}_{\gamma} \cdot \tau  + \ell^{(mn)}_\gamma,
\\ \widetilde{v}^{(mn)}_{\gamma,\tau} &= v^{(mn)}_\tau+ r^{(mn)}_{\gamma} \cdot \tau  + s^{(mn)}_\gamma,\end{align*}
where $k^{(mn)}_{\gamma}, \ell^{(mn)}_{\gamma}, r^{(mn)}_{\gamma}, s^{(mn)}_{\gamma}\in\mathbb{Z}$.  For example, when $(m,n) = (2,2),$  we have that
$u^{(22)}_{\tau} := \frac{\tau}{4} - \frac12 = \frac{\tau-2}{4}$ and $v^{(22)}_\tau := \frac{\tau}{2} - \frac12 = \frac{\tau-1}{2}$.  By Lemma \ref{lem_integers}, we see that $\widetilde{u}^{(22)}_{\gamma,\tau} -u^{(22)}_{\tau}\in  \mathbb Z \tau +  \mathbb Z$ if and only if
\begin{align*}
a + 2c &\equiv 1 \pmod{4}\\
b + 2d &\equiv 2 \pmod{4},
\end{align*}
whereas $\widetilde{v}^{(22)}_{\gamma,\tau} -v^{(22)}_{\tau}\in  \mathbb Z \tau +  \mathbb Z$ if and only if
\begin{align*}
a + c &\equiv 1 \pmod{2}\\
b + d &\equiv 1 \pmod{2}.
\end{align*}
Recalling that $ad-bc=1$, a straightforward calculation shows that these congruences are simultaneously satisfied if and only if $\gamma\in\G^1(4)\cap\G_1(2)$, which is $A_{22}$ in Table \ref{Atable}.

Thus, for general $m\in T'\setminus\{4\}$ we may apply Lemma \ref{lem_transhat}, which reveals that
\begin{align*}
 \widehat{V}_{mn}(\gamma_{mn} \tau) = \psi(\gamma_{mn})^{-3} (-1)^{k^{(mn)}_{\gamma_{mn}} + \ell^{(mn)}_{\gamma_{mn}}+r^{(mn)}_{\gamma_{mn}}+ s^{(mn)}_{\gamma_{mn}}}(c_{mn}\tau+d_{mn})^{\frac12} \phi^{(m)}_{n,\gamma_{mn},\tau} \widehat{V}_{mn}(\tau),
\end{align*}  where for $\gamma = \sm{a}{b}{c}{d} \in \textnormal{SL}_2(\mathbb Z)$, the functions $\phi^{(m)}_{n,\gamma,\tau}$ are defined by \begin{align}    \label{def_fgamma}
\phi^{(m)}_{n,\gamma,\tau} & := e \left(t_m \gamma \tau \right)e\left(\frac{-c}{2(c\tau+d)}(\tilde{u}^{(mn)}_{\gamma,\tau} -\tilde{v}^{(mn)}_{\gamma,\tau})^2\right)  q^{\frac12(k^{(mn)}_\gamma-r^{(mn)}_\gamma)^2}e\left(( {u}^{(mn)}_\tau -  {v}^{(mn)}_\tau)(k^{(mn)}_\gamma-r^{(mn)}_\gamma)\right)q^{-t_m}.
\end{align}
Next,  we define the difference functions
\begin{align*}d_{\tau}^{(mn)} := u^{(mn)}_{\tau} - v^{(mn)}_{\tau},  \ \   \widetilde{d}^{(mn)}_{\gamma,\tau}  := \widetilde{u}_{\gamma,\tau}^{(mn)} - \widetilde{v}_{\gamma,\tau}^{(mn)}, \ \
  \delta^{(mn)}_{\gamma} : =   k_{\gamma}^{(mn)}  - r_{\gamma}^{(mn)}.\end{align*} By hypotheses,  we have that  $d_{\tau}^{(mn)} = D^{(m)}_\tau$, for some function $D^{(m)}_\tau$, which is independent of $n$.  Thus, by Lemma \ref{lem_invuv}, we have for any $n$ such that $(m,n)$ is an admissible pair  that $ \widetilde{d}^{(mn)}_{\gamma,\tau} = \widetilde{D}^{(m)}_{\gamma,\tau}$ and $\delta^{(mn)}_\gamma = \Delta^{(m)}_\gamma$, for some functions $\widetilde{D}^{(m)}_{\gamma,\tau}$ and $\Delta^{(m)}_\gamma$ which are independent of $n$.  For example, when $m=2$, we have that
  \begin{align*} D^{(2)}_\tau = -\frac{\tau}{4}, \ \ \widetilde{D}^{(2)}_{\gamma,\tau} = -\frac14(a\tau+b), \ \ \Delta^{(2)}_\gamma = \frac{1-a}{4}.
  \end{align*}
   Thus, the functions $\phi^{(m)}_{n,\gamma,\tau}$ defined in (\ref{def_fgamma}) are in fact independent of $n$;  that is,  $\phi^{(m)}_{n,\gamma,\tau} = \Phi^{(m)}_{\gamma,\tau}$, where
 \begin{align*}
 \Phi^{(m)}_{\gamma,\tau}  & := e \left(t_m\gamma\tau \right)e\left(\frac{-c}{2(c\tau+d)}(\widetilde{D}^{(m)}_{\gamma,\tau})^2\right)  q^{\frac12(\Delta_\gamma^{(m)})^2}e\left( D_\tau^{(m)} \Delta_\gamma^{(m)}\right)q^{-t_m}.
\end{align*}
After some simplification, using the fact that  $\det\gamma=1$ for any $\gamma \in \textnormal{SL}_2(\mathbb Z)$, we find that $\Phi^{(m)}_{\gamma,\tau} = \varepsilon_\gamma^{(m)}$, where  \begin{align*}
\varepsilon_\gamma^{(m)} &:= \begin{cases} e\left(a b t_m\right), & m=2,4',4'',6, \\
e\left(\frac{4-4a-a b + 4c  }{32}\right), & m=1, \\
e\left(\frac{6-6a-a b + 18 c   - 9 cd }{72 }\right), & m=3, \\
e\left(\frac{12-12 a - 4 a b + 18 c   - 9 c d }{72}\right), & m=5, \end{cases}
\end{align*} and in particular, is a root of unity, and thus also independent of $\tau$.
Thus, we have  shown for $m\in T'\setminus\{4\}$ that \begin{align}\label{eqn_vmnhattr}
\widehat{V}_{mn}(\gamma_{mn}\tau) &=  \psi(\gamma_{mn})^{-3} (-1)^{k^{{(mn)}}_{\gamma_{mn}}+\ell^{(mn)}_{\gamma_{mn}}+r^{(mn)}_{\gamma_{mn}}+s^{(mn)}_{\gamma_{mn}}}\varepsilon_{\gamma_{mn}}^{(m)}(c_{mn}\tau+d_{mn})^{\frac12}  \widehat{V}_{mn}(\tau),\end{align} as desired.

When $m=4$, some additional care is required, as the function $V_{4n}$ is formed by adding $V_{4'n}$ and $V_{4''n}$.  While the groups $A_{4'n}$ and $A_{4''n}$ are equal, a priori, it is not clear  for a matrix $\gamma_n = \sm{a}{b}{c}{d} \in A_{4'n}=A_{4''n} = A_{4n}$ that the two multipliers
$$
(-1)^{k^{{(4'n)}}_{\gamma_n}+\ell^{(4'n)}_{\gamma_{n}}+r^{(4'n)}_{\gamma_{n}}+s^{(4'n)}_{\gamma_{n}}}\varepsilon_{\gamma_{n}}^{(4')} \ \ \text{ and } \ \
(-1)^{k^{{(4''n)}}_{\gamma_n}+\ell^{(4''n)}_{\gamma_{n}}+r^{(4''n)}_{\gamma_{n}}+s^{(4''n)}_{\gamma_{n}}}\varepsilon_{\gamma_{n}}^{(4'')}
$$
are equal, which we desire in order to give a transformation property for the functions $\widehat{V}_{4n}$ by adding the transformations in (\ref{eqn_vmnhattr}) when $m=4'$ and $m=4''$.   By parity considerations, a direct calculation reveals that indeed, $(-1)^{k^{{(4'n)}}_{\gamma_n}+\ell^{(4'n)}_{\gamma_{n}}+r^{(4'n)}_{\gamma_{n}}+s^{(4'n)}_{\gamma_{n}}} = (-1)^{k^{{(4''n)}}_{\gamma_n}+\ell^{(4''n)}_{\gamma_{n}}+r^{(4''n)}_{\gamma_{n}}+s^{(4''n)}_{\gamma_{n}}}$.  For the remaining roots of unity we use fact that $a=1+12a'$ and $b=12b'$  for some integers $a'$ and $b'$, and hence,
\begin{align*} \varepsilon_{\gamma_{n}}^{(4'')} &= \zeta_{288}^{-(1+12a')12b'} = \zeta_{24}^{-b'}(-1)^{-a'b'} =  \zeta_{24}^{-25b'}(-1)^{-25a'b'} = \zeta_{288}^{-25(1+12a')12b'} =
\varepsilon_{\gamma_n}^{(4')}. \end{align*}   Thus, when $m=4$, (\ref{eqn_vmnhattr}) holds as well, with
the multiplier \begin{align*}(-1)^{k^{{(4n)}}_{\gamma_{4n}}+\ell^{(4n)}_{\gamma_{4n}}+r^{(4n)}_{\gamma_{4n}}+s^{(4n)}_{\gamma_{4n}}}\varepsilon_{\gamma_{4n}}^{(4)} := (-1)^{k^{{(4'n)}}_{\gamma_{4n}}+\ell^{(4'n)}_{\gamma_{4n}}+r^{(4'n)}_{\gamma_{4n}}+s^{(4'n)}_{\gamma_{4n}}}\varepsilon_{\gamma_{4n}}^{(4')}
= (-1)^{k^{{(4''n)}}_{\gamma_{4n}}+\ell^{(4''n)}_{\gamma_{4n}}+r^{(4''n)}_{\gamma_{4n}}+s^{(4''n)}_{\gamma_{4n}}}\varepsilon_{\gamma_{4n}}^{(4'')}. \end{align*}
To show that the functions $\widehat{V}_{mn}$ are harmonic Maass forms, we must additionally show that they are annihilated by the operator $\Delta_{\frac12}$.  As summarized in the Appendix, we have for any admissible pair $(m,n)$ that the function $\widehat{V}_{mn}(\tau)$ may be expressed, up to multiplication by an easily determined constant $\alpha_{mn}$, as follows:  \begin{align} \label{table_VM}\begin{array}{lllllllll}  \widehat{V}_{1n}(\tau)  &=&  \alpha_{1n} \widehat{M}_{-\frac14,-\frac12}(\tau),&\widehat{V}_{4'n}(\tau) &=&  \alpha_{4'n} \widehat{M}_{-\frac{5}{12},0}(\tau), &\widehat{V}_{5n}(\tau) &=&  \alpha_{5n} \widehat{M}_{-\frac13,-\frac12}(\tau), \\
 \widehat{V}_{2n}(\tau) &= & \alpha_{2n} \widehat{M}_{-\frac14,0}(\tau),& \widehat{V}_{4''n}(\tau) &=&  \alpha_{4''n} \widehat{M}_{-\frac{1}{12},0}(\tau),  &\widehat{V}_{6n}(\tau) &= & \alpha_{6n} \widehat{M}_{-\frac16,0}(\tau), \\
  \widehat{V}_{3n}(\tau) &= & \alpha_{3n} \widehat{M}_{-\frac16,-\frac12}(\tau), & \widehat{V}_{4n}(\tau) &=& \alpha_{4n}\left(\widehat{M}_{-\frac{5}{12},0}(\tau) + \widehat{M}_{-\frac{1}{12},0}(\tau)\right),  \end{array}
\end{align}
where the functions $\widehat{M}_{a,b}(\tau)$ are defined in (\ref{def_Mhat}).
We then apply Proposition \ref{prop_p1shadows} to see that the functions $\widehat{V}_{mn}$ are annihilated by the operator $\Delta_{\frac12}$.   That these forms satisfy adequate growth conditions follows from their definitions.  Clearly, the functions $V_{mn}$ are the holomorphic parts of the forms $\widehat{V}_{mn}$, hence, are mock modular.

Finally, we prove that for $m\in T$, the functions $V_{mn}$ have, up to a constant multiple, shadows given by the weight $3/2$ eta-theta functions $E_m\left(\frac{2\tau}{c_m^2}\right)$.  To show this, we use (\ref{table_VM}), Proposition \ref{prop_p1shadows}, and Lemma \ref{E-g}.   In the case of $m=1$, combining (\ref{table_VM}) with Proposition
\ref{prop_p1shadows} part (i) shows that up to a constant, the mock modular forms $V_{1n}$ have shadows given by
$g^c_{\frac14,0}(\tau).$  It is not difficult to show by definition that $g^c_{\frac14,0}(\tau) = g_{\frac14,0}(\tau).$  We previously established in Lemma \ref{E-g}  that $E_1(\tau/32) = 4 g_{\frac14,0}(\tau),$ hence, we have proved that the functions $V_{1n}(\tau)$ have shadows given by a (computable)  constant multiple of the eta-theta function $E_1(\tau/32)$, as claimed.  The analogous results for the functions $V_{mn}$ for the other values of $m$ follow by a similar argument.\end{proof}

\section{Quantum sets}\label{sec_quantumsets}

In order to establish quantum modularity of the functions $V_{mn}$,  we must first determine viable sets of rationals.  We call a subset $S\subseteq\mathbb{Q}$ a {\it quantum set} for a function $F$ with respect to the group $G\subseteq \rm{SL}_2(\mathbb{Z})$ if both $F(x)$ and $F(Mx)$ exist (are non-singular) for all $x\in S$ and $M\in G$.

\subsection{Utilizing Theorem \ref{kangthm}}

By examining our catalogue of $V_{mn}$ in the Appendix, we see that precisely when $n=1$ we have a $\mu$-function in the form given in Theorem \ref{kangthm} of the third author. Using Theorem \ref{kangthm} in these cases, and the notation from the Appendix, we directly  obtain the following lemma.

\begin{lemma}\label{Vm1}
For $m\in T'\backslash \{4\}$, we have that
\[
V_{m1}(\tau) = i w_mq^{t_m+\frac18} \cdot g_2\left(e\left( \frac{u_{\tau}^{(m1)}}{2} \right) ;q^{\frac12} \right) -w_m q^{t_m + \frac18} e\left( -\frac{u_{\tau}^{(m1)} }{2}\right)\cdot \frac{\eta(\tau)^4}{\eta(\frac{\tau}{2})^2 \vartheta(u_{\tau}^{(m1)};\tau)}.
\]
\end{lemma}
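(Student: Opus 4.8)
The plan is to apply Kang's formula (Theorem \ref{kangthm}) directly to the single $\mu$-function that defines each $V_{m1}$. The essential structural input is that every admissible form $V_{m1}$ is constructed from the weight-$1/2$ eta-theta function $e_1$, and that the construction of Section \ref{constructingV} forces the second argument of its $\mu$-function to equal $\tau/2$. Concretely, I would first record from the Appendix (equivalently, by tracing through the construction of $V_{11}$ and its analogues) that $v^{(m1)}_\tau = \tau/2$ for each $m \in T'\setminus\{4\}$. This is not accidental: the factor $e_1$ enters precisely through the identity $\vartheta(\tau/2;\tau) = -iq^{-1/8}e_1(\tau/2)$ of Lemma \ref{thetas}, and after the final renormalization $\tau \mapsto \tau/k_m$ the corresponding $v$-parameter is carried to $\tau/2$ uniformly in $m$.

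With $v^{(m1)}_\tau = \tau/2$ in hand, I would set $\alpha := u^{(m1)}_\tau/2$, so that $2\alpha = u^{(m1)}_\tau$ and the $\mu$-function appearing in $V_{m1}(\tau) = w_m q^{t_m}\mu(u^{(m1)}_\tau, v^{(m1)}_\tau;\tau)$ from (\ref{eq_Vnotation}) is exactly $\mu(2\alpha, \tau/2;\tau)$. Since $\alpha \in \tfrac12\mathbb{Z}\tau + \tfrac12\mathbb{Z}$ holds if and only if $u^{(m1)}_\tau = 2\alpha \in \mathbb{Z}\tau + \mathbb{Z}$, the hypothesis $\alpha \notin \tfrac12\mathbb{Z}\tau + \tfrac12\mathbb{Z}$ of Theorem \ref{kangthm} is equivalent to the non-degeneracy condition $u^{(m1)}_\tau \in \mathbb{C}\setminus(\mathbb{Z}\tau + \mathbb{Z})$ that already guarantees $V_{m1}$ is well-defined; this is exactly the condition whose failure eliminates degenerate cases such as $V_{16}$. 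Hence Kang's theorem applies for every $m \in T'\setminus\{4\}$, yielding
\begin{equation*}
\mu\left(u^{(m1)}_\tau, \frac{\tau}{2};\tau\right) = iq^{\frac18} g_2\left(e\left(\frac{u^{(m1)}_\tau}{2}\right);q^{\frac12}\right) - e\left(-\frac{u^{(m1)}_\tau}{2}\right)q^{\frac18}\,\frac{\eta(\tau)^4}{\eta\left(\frac{\tau}{2}\right)^2 \vartheta(u^{(m1)}_\tau;\tau)}.
\end{equation*}
Multiplying both sides by $w_m q^{t_m}$ and combining the powers into $q^{t_m+\frac18}$ then produces the claimed identity.

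I do not anticipate any genuine obstacle, as the argument is essentially a substitution into an established formula. The only point requiring real care is the uniform verification that $v^{(m1)}_\tau = \tau/2$ for all eight values of $m$, together with the bookkeeping of the parameters $w_m$, $t_m$, and $u^{(m1)}_\tau$ read off from the Appendix. It is also worth noting explicitly why $m=4$ is excluded: the form $V_{4n} = V_{4'n}+V_{4''n}$ is a \emph{sum} of two $\mu$-functions (each of which does have second argument $\tau/2$ and so individually obeys Kang's formula), and therefore does not collapse to a single instance of Theorem \ref{kangthm} in the clean shape stated here.
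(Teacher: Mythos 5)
Your proposal is correct and follows essentially the same route as the paper: the authors likewise observe that for every $m\in T'\setminus\{4\}$ the Appendix gives $V_{m1}(\tau)=w_m q^{t_m}\mu\left(u^{(m1)}_\tau,\tfrac{\tau}{2};\tau\right)$, so the lemma follows by direct substitution into Theorem \ref{kangthm} with $\alpha=u^{(m1)}_\tau/2$. Your additional remarks (the equivalence of Kang's non-degeneracy hypothesis with $u^{(m1)}_\tau\notin\mathbb{Z}\tau+\mathbb{Z}$, and the reason $m=4$ is excluded while $4',4''$ are not) are accurate elaborations of points the paper leaves implicit.
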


Furthermore, when $n\neq 1$ we are still able to utilize Theorem \ref{kangthm}.  Let $m\in T'\backslash \{4\}$, and $n$ admissible.  Then by Lemma \ref{lem_mu} (4),  and using the fact that $\vartheta'(0;\tau) = -2\pi \eta(\tau)^3$, we see that
\begin{equation}\label{def_fmndef}
  V_{mn}(\tau) - V_{m1}(\tau) =  iw_m q^{t_m}\frac{\eta^3(\tau) \vartheta\left(\frac{\tau}{2} + u_\tau^{(mn)};\tau\right)\vartheta(u_\tau^{(mn)} - u_\tau^{(m1)};\tau)}{\vartheta(u_\tau^{(m1)};\tau) \vartheta\left(\frac{\tau}{2};\tau\right) \vartheta(u_\tau^{(mn)};\tau) \vartheta(v_\tau^{(mn)};\tau)}  =:  \mathcal F_{mn}(\tau).   \end{equation}
We will  explicitly  show in Lemma \ref{lem_modfmn} that these functions $\mathcal F_{mn}(\tau)$  transform like  weakly holomorphic modular forms of weight $1/2$.
Since we can write 
\begin{equation}\label{eq_KangExtm}
V_{mn}(\tau) = V_{m1}(\tau) + \mathcal F_{mn}(\tau)
\end{equation}
for $m\in T'\backslash \{4\}$, and for $m=4$, 
\begin{equation}\label{eq_KangExt4}
V_{4n}(\tau) = V_{41}(\tau) + \mathcal F_{4'n}(\tau) + \mathcal F_{4''n}(\tau),
\end{equation}
once we establish quantum sets for the $V_{m1}$, which we will do in Section \ref{sec_quantset1}, we can use \eqref{eq_KangExtm} and \eqref{eq_KangExt4} to find subsets that are quantum sets for the more general $V_{mn}$, for each $m\in T$. We do so in Section \ref{sec_qsetvmn}. 

\subsection{Determining quantum sets for $V_{m1}$}\label{sec_quantset1}
Observe from Lemma \ref{Vm1} that for $m\in T'\backslash\{4\}$, $V_{m1}(\tau)$ is a sum of two of terms, each with a coefficient that is a constant times a power of $q$.  The first term is of the form
\begin{equation}\label{eqn:g_2part}
g_2(a_mq^\frac{1}{2b_m};q^\frac12)  =\sum_{n=0}^\infty \frac{(-q^\frac12;q^\frac12)_n q^{n(n+1)/4}}{(a_mq^\frac{1}{2b_m};q^\frac12)_{n+1}(a_m^{-1}q^\frac{b_m-1}{2b_m};q^\frac12)_{n+1}},
\end{equation}
where $a_m = 1$ when $m$ is even, $a_m=i$ when $m$ is odd, and $(b_1,b_2, b_3, b_{4'}, b_{4''}, b_5, b_6) =(4,4,3,12,12/5, 6,3)$.  
The second term is
\[
\frac{\eta(\tau)^4}{\eta(\frac{\tau}{2})^2 \vartheta(u_{\tau}^{(m1)} ; \tau)},
\]
which by  \eqref{eq:theta}  we see is equal to   $ie(u_{\tau}^{(m1)})$  multiplied by  the infinite product
\begin{align}\label{eqn:eta_part}
f_m(\tau) := \frac{(q;q)_\infty( -  q^\frac12;q^\frac12)_\infty^2}{(a_m^2q^\frac{1}{b_m};q)_\infty(a_m^{-2}q^{\frac{b_m-1}{b_m}};q)_\infty}.
\end{align}
Note that for any $\tau\in\mathbb{Q}$, constant and $q$-power multiples of these terms will not affect whether $V_{m1}(\tau)$ and $V_{m1}(M\tau)$ exist.  Thus, we may determine quantum sets for each $V_{m1}(\tau)$ by examining the sum and product appearing in equations \eqref{eqn:g_2part} and \eqref{eqn:eta_part}.
We seek rational numbers $h/k \in \mathbb Q$ such that for sufficiently large $n$, 
\begin{align}\label{eqn_terminate}
0 = \left(-e\left(\frac{h}{2k}\right);e\left(\frac{h}{2k}\right)\right)_n = \prod_{j=1}^n \left(1 + e\left(\frac{jh}{2k}\right)\right),
\end{align}
and hence, the infinite sum defining the function $g_2$ in (\ref{eqn:g_2part}) terminates, and can be explicitly evaluated.  
The identity in (\ref{eqn_terminate})  holds if and only if $jh/k$ is an odd integer for some $1\leq j\leq n$.  This can never happen when $h$ is even, and when $h$ is odd, then $j=k$ causes the series to terminate at $n=k$.  Thus, the largest possible set  for which (\ref{eqn_terminate}) can hold is
\begin{equation}\label{def_S}
S:=\left\{h/k\in\mathbb{Q} \, \middle| \, h\in\mathbb{Z}, k\in\mathbb{N}, \gcd(h,k)=1,  h\equiv 1 \!\!\!\! \pmod 2 \right\}.
\end{equation}
We also set 
  \begin{align}\label{def_moreS}
S' &:= \left\{ h/k \in S \mid h\equiv \pm 1 \!\!\!\!  \pmod 6\right\} \\
 S_{ev} & := \left\{  h/k \in S \mid k\equiv 0 \!\!\!\!  \pmod 2 \right\} \nonumber \\
S_{od} & := \left\{  h/k \in S \mid k\equiv 1 \!\!\!\!  \pmod 2 \right\}, \nonumber 
\end{align}  
and define the subsets $S_{m1}\subseteq S$ by  
\begin{align*}
 S_{11}, \, S_{21}, \,  S_{41}   &:=S\\
 S_{31}, S_{61} & := S'  \\
 S_{51} & :=   S'\cup S_{ev}.  
\end{align*}
We will prove the following theorem.
\begin{theorem}\label{thm_QuantumSet}
For $m\in T$, the set $S_{m1}$ is a quantum set for $V_{m1}$ with respect to the group $G_{m1}$, where
\begin{align*}
& G_{11} :=\left<\left(\begin{smallmatrix} 1 & 0\\ 2 & 1 \end{smallmatrix}\right), \left(\begin{smallmatrix} 1 & 4\\ 0 & 1 \end{smallmatrix}\right) \right> \subset \Gamma_0(2) \cap \Gamma^0(4),\\
& G_{21} :=\left<\left(\begin{smallmatrix} 1 & 0\\ 1 & 1 \end{smallmatrix}\right), \left(\begin{smallmatrix} 1 & 4\\ 0 & 1 \end{smallmatrix}\right) \right> \subset \Gamma^0(4),\\
& G_{31}, G_{51} :=\left<\left(\begin{smallmatrix} 1 & 0\\ 2 & 1 \end{smallmatrix}\right), \left(\begin{smallmatrix} 1 & 6\\ 0 & 1 \end{smallmatrix}\right) \right> \subset \Gamma_0(2) \cap \Gamma^0(6),\\
& G_{41} :=\left<\left(\begin{smallmatrix} 1 & 0\\ 1 & 1 \end{smallmatrix}\right), \left(\begin{smallmatrix} 1 & 12\\ 0 & 1 \end{smallmatrix}\right) \right> \subset \Gamma^0(12),\\
& G_{61} :=\left<\left(\begin{smallmatrix} 1 & 0\\ 1 & 1 \end{smallmatrix}\right), \left(\begin{smallmatrix} 1 & 6\\ 0 & 1 \end{smallmatrix}\right) \right> \subset \Gamma^0(6).\\
\end{align*}
\end{theorem}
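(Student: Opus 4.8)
The plan is to reduce the claim to an analysis of the two explicit pieces of $V_{m1}$ furnished by Lemma \ref{Vm1}, and then to verify separately (a) that $V_{m1}$ is non-singular at every $x\in S_{m1}$, and (b) that $G_{m1}$ preserves $S_{m1}$, so that non-singularity on $S_{m1}$ automatically yields the existence of $V_{m1}(Mx)$ for every $M\in G_{m1}$. For $m\in T'\setminus\{4\}=\{1,2,3,5,6\}$ I would apply Lemma \ref{Vm1} directly, writing $V_{m1}$ as a constant-and-$q$-power multiple of the $g_2$-series \eqref{eqn:g_2part} plus such a multiple of the eta-product $f_m$ in \eqref{eqn:eta_part}; for $m=4$ I would instead use the decomposition $V_{41}=V_{4'1}+V_{4''1}$, apply the same method to each summand (both indices lie in $T'\setminus\{4\}$), and intersect the resulting sets. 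Since overall nonzero constants and powers of $q$ never affect whether a radial limit exists at a rational point, it suffices throughout to examine \eqref{eqn:g_2part} and \eqref{eqn:eta_part}.

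The termination analysis already carried out in the excerpt shows that the factor $(-q^{1/2};q^{1/2})_n$ first vanishes at $n=k$ precisely when $x=h/k$ has $h$ odd, so that the $g_2$-series truncates to the finite sum over $0\le n\le k-1$ exactly for $x\in S$; this is what forces $S_{m1}\subseteq S$. At such a point I would then check non-singularity of the truncated sum by requiring that none of the finitely many surviving denominators $(a_m q^{1/(2b_m)};q^{1/2})_{n+1}$ and $(a_m^{-1}q^{(b_m-1)/(2b_m)};q^{1/2})_{n+1}$, $0\le n\le k-1$, vanish at $q=e(h/k)$, and analogously that the factors of the product $f_m$ degenerate compatibly.

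Tracking when these Pochhammer factors vanish depends on $a_m$ (hence the parity of $m$, since $a_m=1$ for $m$ even and $a_m=i$ for $m$ odd) and on $b_m$, and this bookkeeping is exactly what refines $S$ to $S'$ when $b_m=3$ (the cases $m\in\{3,6\}$, where one must exclude $3\mid h$) and to $S'\cup S_{ev}$ when $b_m=6$ (the case $m=5$), while for $m\in\{1,2,4\}$ no further restriction arises and $S_{m1}=S$. I expect the genuine obstacle to be controlling the infinite product $f_m$ at roots of unity: as $\tau\to h/k$ both its numerator and denominator $q$-products degenerate, so one must show that their orders of vanishing match and that $f_m$ tends to a finite limit rather than blowing up. The cleanest route is to recognize $f_m$, up to a power of $q$, as an honest eta-quotient (the theta factor $\vartheta(u^{(m1)}_\tau;\tau)$ being the relevant $e_1$ by Lemma \ref{thetas}) and to read off its cusp behavior at $h/k$ via a modular transformation, confirming non-negative order exactly on $S_{m1}$.

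Finally I would prove $G_{m1}\cdot S_{m1}\subseteq S_{m1}$. Each $G_{m1}$ is generated by a lower-triangular matrix $\sm{1}{0}{c_0}{1}$ with $c_0\in\{1,2\}$ and a translation $\sm{1}{w}{0}{1}$ with $w\in\{4,6,12\}$; on $x=h/k$ these act by $h/k\mapsto h/(c_0 h+k)$ and $h/k\mapsto(h+wk)/k$, each of which leaves the numerator $h$ unchanged (hence preserves its residue modulo $2$ and modulo $6$), and, because every relevant $w$ is even, also preserves the parity of the denominator. A short check on the generators and their inverses then shows that each of $S$, $S'$, and $S'\cup S_{ev}$ is invariant under the corresponding group. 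Thus $Mx\in S_{m1}$, and part (a) gives that $V_{m1}(Mx)$ exists; combining this invariance with (a) completes the proof.
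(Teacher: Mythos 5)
Your overall architecture --- split $V_{m1}$ via Lemma \ref{Vm1} into the truncating $g_2$-series and the eta-type product $f_m$, analyze the first for termination and non-vanishing denominators, analyze the second at roots of unity, then prove that $G_{m1}$ preserves $S_{m1}$ --- is exactly the paper's (Lemmas \ref{lem_g_2}, \ref{lem_eta} and the proof of Theorem \ref{thm_QuantumSet}), and your $g_2$-analysis and group-closure plan match the paper's. However, there are two genuine problems in how you handle the second piece. First, your bookkeeping claim is false for $m=3$: the $g_2$-analysis does \emph{not} refine $S$ to $S'$ in that case. Lemma \ref{lem_g_2} shows that for $m=3$ the $g_2$-series terminates with non-vanishing denominators on the strictly larger set $S'\cup S_{od}$; the exclusion of $h\equiv 3\pmod 6$ with $k$ odd is forced only by the \emph{second} term: for such $h/k$ a factor in the denominator of $f_3$ vanishes (this is flagged explicitly in the proof of Lemma \ref{lem_eta}). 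So part of the arithmetic that defines $S_{m1}$ lives precisely in the piece you treat as a formality to be ``confirmed,'' and your proposed verification must actually produce that restriction rather than inherit it from the $g_2$ part.

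Second, your picture of $f_m$ at rationals is off. On $S_{m1}$ the denominator products of $f_m$ have \emph{no} vanishing factors at all --- that non-vanishing is the defining constraint --- while the numerator $(q;q)_\infty(-q^{1/2};q^{1/2})_\infty^2$ acquires a zero factor at the finite stage $n=k$; hence $f_m$ vanishes identically on $S_{m1}$ (Lemma \ref{lem_eta}). There is no ``matching of orders of vanishing'' and no finite nonzero limit to aim for. Indeed, since $f_m$ is (up to constants and powers of $q$) a weight-$1/2$ eta-quotient, its radial limit at any cusp is either $0$ (strictly positive cusp order) or infinite (order $\le 0$); a finite nonzero limit is impossible, so your criterion of ``non-negative order'' is the wrong one --- you need strictly positive order, which is equivalent to the vanishing the paper proves directly and elementarily. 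Your cusp-order route could be repaired to give this, but as stated it would not close the argument; moreover the vanishing (not mere finiteness) of $f_m$ is what the paper needs downstream (Lemma \ref{lem_zerofmn}, Corollary \ref{cor_hypergeometric}). A smaller slip in the closure step: the translation generator sends $h/k$ to $(h+wk)/k$, so it does change the numerator; invariance holds because $w$ is always even (preserving $h$ odd) and $w\equiv 0\pmod 6$ exactly in the cases $m\in\{3,5,6\}$ where a mod-$6$ condition on $h$ is imposed, together with the observation that for $m=5$ an even $k$ gives even $2h+k$, and a sign normalization for inverses producing negative denominators.
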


Before proving Theorem \ref{thm_QuantumSet} we prove two lemmas which analyze the behavior of $V_{m1}$ on $S_{m1}$.

\begin{lemma}\label{lem_g_2}
For each $m\in T'\backslash\{4\}$ we have that $g_2(a_mq^\frac{1}{2b_m};q^\frac12)$ is well-defined for $\tau\in S_{m1}$.
\end{lemma}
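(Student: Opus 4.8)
The plan is to show that at a rational point $\tau = h/k$ (with $\gcd(h,k)=1$, and necessarily $h$ odd since $S_{m1}\subseteq S$ by (\ref{def_S})) the series defining $g_2\big(a_mq^{1/(2b_m)};q^{1/2}\big)$ collapses to a \emph{finite} sum of finite terms, hence is well-defined; here for $m\in\{4',4''\}$ we read $S_{m1}=S_{41}=S$. Write $q=e(h/k)$, so that $q^{1/2}=e(h/(2k))$ is a root of unity. First I would invoke the computation leading to (\ref{eqn_terminate}): since $h$ is odd, the factor $1+e(jh/(2k))$ of $(-q^{1/2};q^{1/2})_n$ vanishes exactly when $jh/k$ is an odd integer, and because $\gcd(h,k)=1$ the smallest such $j$ is $j=k$. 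Thus $(-q^{1/2};q^{1/2})_n\neq 0$ for $0\le n\le k-1$ and $(-q^{1/2};q^{1/2})_n=0$ for $n\ge k$, so in (\ref{eqn:g_2part}) only the terms $n=0,1,\dots,k-1$ survive. It then suffices to prove that for these $n$ the denominators $(a_mq^{1/(2b_m)};q^{1/2})_{n+1}$ and $(a_m^{-1}q^{(b_m-1)/(2b_m)};q^{1/2})_{n+1}$ are nonzero, since each surviving term is then a finite complex number.

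The second step is to reduce this to a nonvanishing statement about individual factors. A factor of the first denominator has the form $1-a_mq^{(1+b_mj)/(2b_m)}$ and one of the second has the form $1-a_m^{-1}q^{(b_m-1+b_mj)/(2b_m)}$, with $0\le j\le k-1$. Writing $a_m=e(\alpha_m)$ with $\alpha_m=0$ for $m$ even and $\alpha_m=\tfrac14$ for $m$ odd, such a factor vanishes precisely when a quantity of the shape $\pm\alpha_m+\tfrac{h(c+b_mj)}{2b_mk}$, with $c\in\{1,b_m-1\}$, lies in $\mathbb{Z}$. Clearing denominators converts each of these into an integer congruence modulo a multiple of $k$, and the lemma becomes the assertion that, for $h/k\in S_{m1}$, none of these congruences admits a solution with $0\le j\le k-1$.

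For the final step I would run the short case analysis over $m\in\{1,2,3,4',4'',5,6\}$, in each case exploiting exactly the congruence conditions built into $S_{m1}$ in (\ref{def_S})--(\ref{def_moreS}). When $b_m$ forces $c+b_mj$ to be odd (the cases $m\in\{1,2,4',4''\}$, where $S_{m1}=S$), the cleared numerator is odd while the modulus is even, giving an immediate parity obstruction; the noninteger value $b_{4''}=12/5$ causes no difficulty once one records $q^{1/(2b_{4''})}=q^{5/24}$. For $m\in\{3,6\}$, where $S_{m1}=S'$, a reduction modulo $3$ shows a solution would force $3\mid h$, contradicting $h\equiv\pm1\pmod 6$.

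I expect the case $m=5$, where $S_{51}=S'\cup S_{ev}$, to be the main obstacle, since no single argument covers the whole set: on the $S_{ev}$ part ($k$ even) one uses a parity argument, while on the $S'$ part ($3\nmid h$) one uses the mod-$3$ argument, and one must verify that the union $S'\cup S_{ev}$ genuinely exhausts all admissible $h/k$. Moreover the quarter-turn coming from $a_m=i$ (present for every odd $m$, including $m=5$) forces one to clear denominators by $4b_mk$ and carry an extra $\pm b_mk/2$ term before reducing modulo $2$ or $3$, which is where the bookkeeping is most delicate; this is the step I would write out with the greatest care.
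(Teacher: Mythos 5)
Your proposal is correct and follows essentially the same route as the paper's proof: terminate the series at $n=k$ using the vanishing of $(-q^{1/2};q^{1/2})_n$ for odd $h$, then show the denominator factors are nonzero for $0\le j\le k-1$ via the parity obstruction for $m\in\{1,2,4',4''\}$ (and the $5b_{4''}=12$ trick), the mod-$3$ obstruction for $m\in\{3,6\}$, and the split into $S_{ev}$ (parity) and $S'$ (mod $3$) for $m=5$. The paper's proof additionally exhibits explicit solutions showing the sets $S_{m1}$ are maximal, but that is not required by the lemma statement.
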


\begin{proof}
We have seen above that for any $\tau\in S$, the series in \eqref{eqn:g_2part} terminates at $n=k$.  We further require that $(a_m e(\frac{h}{2b_m k});e(\frac{h}{2k}))_{n+1}$, and $(a_m^{-1} e(\frac{(b_m-1)h}{2b_m k});e(\frac{h}{2k}))_{n+1}$ do not vanish before the termination of the series.  First, we note that
\begin{align*}
\left(a_m e\left(\frac{h}{2b_m k}\right);e\left(\frac{h}{2k}\right)\right)_{n+1} & = \prod_{j=0}^n \left(1-a_m e\left(\frac{(b_mj+1)h}{2b_mk} \right) \right) \\
\left(a_m^{-1} e\left(\frac{(b_m-1)h}{2b_m k}\right);e\left(\frac{h}{2k}\right)\right)_{n+1} & = \prod_{j=0}^n \left(1-a_m^{-1} e\left(\frac{(b_m(j+1)-1)h}{2b_mk} \right) \right).
\end{align*}

When $m$ is even, we have $a_m=a_m^{-1}=1$, and when $m$ is odd, we have $a_m = i$ and $a_m^{-1}=-i$.  Thus for $m$ even we need to avoid the existence of an $r\in\mathbb{Z}$ and $0\leq j \leq n$ such at least one of the following hold,
\begin{align}
h(b_mj+1) &= 2b_m kr, \label{even1} \\
h(b_m(j+1) -1) &= 2b_m kr. \label{even2}
\end{align}

This can never occur for the cases $m=2,4'$ because $b_2=4$ and $b_{4'}=12$ are even while $h$ is odd.  When $m=4''$, multiplying the equations through by $5$ gives a similar contradiction since $5b_{4''}=12$ is even while $5$, $h$ are odd.  Thus when $m=2,4',4''$, $S$ is the largest set  of rationals over which  the sum defining $g_2(a_mq^\frac{1}{2b_m};q^\frac12)$ terminates. 

For $m=6$, we have $b_6=3$, so we see that one of $h(3j+1) = 6kr$ or $h(3j+2) = 6kr$ can occur when $h\equiv 3\pmod{6}$.  This is because we must have that $k\equiv \pm 1\pmod{3}$, so if $k\equiv 1\pmod 3$, let $j=\frac{2k-1}{3}$, then we have that $0<j<k$ is an integer and so is $r = \frac{h(3j+1)}{6k}$.  Similarly, if $k\equiv 2\pmod 3$, let $j=\frac{2k-2}{3}$, then we have that $0<j<k$ is an integer and so is $r = \frac{h(3(j+1)-1)}{6k}$.  However, when $h\equiv \pm1 \pmod{6}$ we see that neither \eqref{even1} nor \eqref{even2} can be satisfied by reducing the equalities modulo 3.  Thus when $m=6$, $S_{61}$ is the largest  set of rationals over which  the sum defining  $g_2(a_mq^\frac{1}{2b_m};q^\frac12)$   terminates.  

Similarly, when $m$ is odd we   wish   to avoid the existence of an $r\in\mathbb{Z}$ and $0\leq j \leq n$ such that at least one of the following hold,
\begin{align}
2h(b_mj+1) &= b_m k(4r-1), \label{odd1} \\
2h(b_m(j+1) -1) &= b_m k(4r+1). \label{odd2}
\end{align}

This can never occur for the case $m=1$ because here $b_1=4$ while $h$ is odd, meaning the left hand side of neither equation is divisible by $4$.  Thus when $m=1$, $S$ is the largest  set of rationals over which  the sum defining  $g_2(a_mq^\frac{1}{2b_m};q^\frac12)$  terminates.  

For $m=3$, we have $b_3=3$ so equations \eqref{odd1}, \eqref{odd2} become
\begin{align*}
2h(3j+1) &= 3k(4r-1), \\
2h(3j +2) &= 3k(4r+1).
\end{align*}
When $h\equiv \pm1 \pmod{6}$, or if $h\equiv 3 \pmod{6}$ and $k$ is odd, we see that neither \eqref{odd1} nor \eqref{odd2} can be satisfied.  However if $h\equiv 3 \pmod{6}$ and $k$ is even, then one of $2h(3j+1) = 3k(4r-1)$ or $2h(3j -2) = 3k(4r+1)$ can occur for some $0\leq j<k$.  This is because then $k\equiv \pm 2\pmod{6}$, so we may consider the following four possible cases.
\begin{enumerate}
\item Let $h\equiv 3 \pmod{12}$ and $k\equiv 2 \pmod 6$.  Then $j=\frac{5k-4}{6}\in\mathbb{N}$, and $r=\frac{1}{4}(\frac{2h(3j+2)}{3k} -1)\in\mathbb{Z}$.
\item Let $h\equiv 3 \pmod{12}$ and $k\equiv 4 \pmod 6$.  Then $j=\frac{k-4}{6}\in\mathbb{N}$, and $r=\frac{1}{4}(\frac{2h(3j+2)}{3k} -1)\in\mathbb{Z}$.
\item Let $h\equiv 9 \pmod{12}$ and $k\equiv 2 \pmod 6$.  Then $j=\frac{k-2}{6}\in\mathbb{N}$, and $r=\frac{1}{4}(\frac{2h(3j+1)}{3k} +1)\in\mathbb{Z}$.
\item Let $h\equiv 9 \pmod{12}$ and $k\equiv 4 \pmod 6$.  Then $j=\frac{5k-2}{6}\in\mathbb{N}$, and $r=\frac{1}{4}(\frac{2h(3j+1)}{3k} +1)\in\mathbb{Z}$.
\end{enumerate}
In each of these cases observe that $0\leq j <k$.  Thus we see that when $m=3$, actually  $S'\cup S_{od}$  
is the largest  set of rationals over which   the sum defining  $g_2(a_mq^\frac{1}{2b_m};q^\frac12)$   terminates. However, we will see in the next lemma that we must eventually restrict to $S_{31}$.

For $m=5$ we have $b_5=6$ so equations \eqref{odd1}, \eqref{odd2} become
\begin{align*}
h(6j+1) &= 3k(4r-1), \\
h(6j+5) &= 3k(4r+1).
\end{align*}
When $h\equiv \pm1 \pmod{6}$, or if $h\equiv 3 \pmod{6}$ and $k$ is even, we see that neither \eqref{odd1} nor \eqref{odd2} can be satisfied.  However if $h\equiv 3 \pmod{6}$ and $k$ is odd, then one of $h(6j+1) = 3k(4r-1)$ or $h(6j+5) = 3k(4r+1)$ can occur for some $0\leq j<k$.  This is because then $k\equiv \pm 1\pmod{6}$, so we may consider the following four possible cases.
\begin{enumerate}
\item Let $h\equiv 3 \pmod{12}$ and $k\equiv 1 \pmod 6$.  Then $j=\frac{5k-5}{6}\in\mathbb{N}$, and $r=\frac{1}{4}(\frac{h(6j+5)}{3k} -1)\in\mathbb{Z}$.
\item Let $h\equiv 3 \pmod{12}$ and $k\equiv 5 \pmod 6$.  Then $j=\frac{k-5}{6}\in\mathbb{N}$, and $r=\frac{1}{4}(\frac{h(6j+5)}{3k} -1)\in\mathbb{Z}$.
\item Let $h\equiv 9 \pmod{12}$ and $k\equiv 1 \pmod 6$.  Then $j=\frac{k-1}{6}\in\mathbb{N}$, and $r=\frac{1}{4}(\frac{h(6j+1)}{3k} +1)\in\mathbb{Z}$.
\item Let $h\equiv 9 \pmod{12}$ and $k\equiv 5 \pmod 6$.  Then $j=\frac{5k-1}{6}\in\mathbb{N}$, and $r=\frac{1}{4}(\frac{h(6j+1)}{3k} +1)\in\mathbb{Z}$.
\end{enumerate}
In each of these cases observe that $0\leq j <k$.  Thus we see that when $m=5$, $S_{51}$ is the largest  set of rationals over which  the sum defining $g_2(a_mq^\frac{1}{2b_m};q^\frac12)$ terminates.
\end{proof}

We next analyze the second term  from Lemma \ref{Vm1},  $f_m(\tau)$, when $\tau\in S_{m1}$.  The following result is used to prove the transformation formulas in the next section.

\begin{lemma}\label{lem_eta}
For each $m\in T'\backslash \{4\}$,  $f_m(\tau)$ vanishes for each $\tau\in S_{m1}$.
\end{lemma}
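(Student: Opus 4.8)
The plan is to convert the infinite product $f_m$ into an eta-quotient and then read off its radial limit at $h/k$ from the classical asymptotics of $\eta$ near a rational point; the whole matter reduces to checking the sign of an explicit arithmetic quantity on $S_{m1}$.

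First I would reduce to an eta-quotient. The discussion around \eqref{eqn:eta_part} records that $\eta(\tau)^4/(\eta(\tfrac{\tau}{2})^2\vartheta(u^{(m1)}_\tau;\tau)) = i\,e(u^{(m1)}_\tau) f_m(\tau)$; since $u^{(m1)}_\tau$ has a finite limit as $\tau\to h/k$, the factor $e(u^{(m1)}_\tau)$ tends to a finite nonzero value, so it suffices to prove that $\eta(\tau)^4/(\eta(\tfrac{\tau}{2})^2\vartheta(u^{(m1)}_\tau;\tau))\to 0$. For $m\in T'\setminus\{4\}$ one has $u^{(m1)}_\tau=\tfrac{\tau}{b_m}$ when $m$ is even and $u^{(m1)}_\tau=\tfrac{\tau}{b_m}+\tfrac12$ when $m$ is odd. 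Using the quasi-periodicity $\vartheta(v+1;\tau)=-\vartheta(v;\tau)$ from \eqref{tt1} (in the odd case) and then Lemma \ref{thetas}, I would rewrite $\vartheta(u^{(m1)}_\tau;\tau)$ as a power of $q$ times one of the functions $e_n$ of \eqref{e}, and substitute its eta-product expression. This yields
\[
f_m(\tau)=\kappa_m(\tau)\prod_{\delta}\eta(\delta\tau)^{r_\delta},
\]
a finite eta-quotient indexed by positive rationals $\delta$, where $\kappa_m(\tau)$ is a root of unity times a power of $q$ and so has a finite nonzero radial limit at $h/k$. For instance $f_1\sim\eta(\tau)^4\eta(\tfrac{\tau}{4})/\eta(\tfrac{\tau}{2})^4$ (via $e_6$), $f_2\sim\eta(\tau)^3/(\eta(\tfrac{\tau}{2})\eta(\tfrac{\tau}{4}))$ (via $e_5$), and $f_6\sim\eta(\tau)^4/(\eta(\tfrac{\tau}{2})^2\eta(\tfrac{\tau}{3}))$ (via $e_3$).

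Next I would record the radial asymptotics of the individual factors. Writing $\delta=p_\delta/D_\delta$ in lowest terms and $\tau=\tfrac{h}{k}+it$, the point $\delta\tau$ approaches $\delta h/k$ vertically, a cusp whose denominator is $c_\delta=D_\delta k/\gcd(p_\delta h,D_\delta k)$. Applying Lemma \ref{lem_etatransf} to a matrix carrying $i\infty$ to $\delta h/k$, together with $\eta(w)=e(w/24)(1+O(e(w)))$, gives
\[
\bigl|\eta(\delta\tau)\bigr|\ \sim\ C_\delta\,t^{-1/2}\exp\!\Bigl(-\tfrac{\pi\,\gcd(p_\delta h,D_\delta k)^2}{12\,p_\delta D_\delta\,k^2\,t}\Bigr),\qquad t\to 0^+,
\]
with $C_\delta>0$. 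Taking the product over the factors of the eta-quotient then yields
\[
|f_m(\tau)|\ \sim\ C\,t^{\nu}\exp\!\Bigl(-\tfrac{\pi}{12 k^2 t}\,\Omega_m(h,k)\Bigr),\qquad \Omega_m(h,k):=\sum_\delta r_\delta\,\tfrac{\gcd(p_\delta h,D_\delta k)^2}{p_\delta D_\delta},
\]
for some real $\nu$ and $C>0$. As the polynomial prefactor $t^\nu$ is subexponential, $f_m(\tau)\to 0$ as $\tau\to h/k$ whenever $\Omega_m(h,k)>0$.

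It then remains to verify $\Omega_m(h,k)>0$ for every $h/k\in S_{m1}$. Because the finite list of $\delta$'s is fixed for each $m$ and each $\gcd(p_\delta h,D_\delta k)$ depends only on the parity of $k$ and on the residue of $h$ modulo $6$ (in particular on whether $3\mid h$), this is a finite check governed by the congruences defining $S_{m1}$. For $m=1$ every $\delta\in\{1,\tfrac12,\tfrac14\}$ has $\gcd(p_\delta h,D_\delta k)=1$ for $h$ odd, whence $\Omega_1(h,k)=4+\tfrac14-2=\tfrac94>0$ on all of $S_{11}=S$; for $m=6$ one finds $\Omega_6(h,k)=3-\tfrac13\gcd(h,3k)^2$, which equals $\tfrac83>0$ exactly when $3\nmid h$, i.e. on $S'=S_{61}$, and vanishes when $3\mid h$. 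The remaining cases $m\in\{2,3,5\}$ are identical in spirit, and the arithmetic restrictions built into $S_{31},S_{51},S_{61}$ (the conditions $h\equiv\pm1\pmod 6$, together with allowing $k$ even in the $S_{ev}$-part of $S_{51}$) are precisely what force $\Omega_m>0$. The main labor, and the only real obstacle, is this gcd bookkeeping: computing $\gcd(p_\delta h,D_\delta k)$ correctly across the relevant residue classes and confirming strict positivity exactly on $S_{m1}$; the asymptotic itself is a standard, if slightly delicate, application of the $\eta$ transformation law.
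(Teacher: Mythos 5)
Your approach is genuinely different from the paper's, and its analytic core is sound as far as it goes: the identifications $f_1\sim\eta(\tau)^4\eta(\tau/4)/\eta(\tau/2)^4$, $f_2\sim\eta(\tau)^3/(\eta(\tau/2)\eta(\tau/4))$, $f_6\sim\eta(\tau)^4/(\eta(\tau/2)^2\eta(\tau/3))$ are correct (and $m=3,5$ work similarly via $e_4,e_8$), your cusp asymptotic for $\eta(\delta\tau)$ is correct, and $\Omega_m>0$ does hold on $S_{m1}$ for $m\in\{1,2,3,5,6\}$. But what you prove is that the \emph{radial limit} $\lim_{t\to 0^+}f_m(h/k+it)$ is zero, whereas the paper's lemma --- as it is proved and as it is used --- is a statement about the evaluation of the $q$-product \eqref{eqn:eta_part} at the root of unity $q=e(h/k)$: since $h$ is odd, the numerator $(q;q)_\infty(-q^{1/2};q^{1/2})_\infty^2$ contains a literal zero factor by its $k$th term, and the entire content of the paper's proof is the congruence argument showing that \emph{no factor of the denominator vanishes}, so that the evaluation is $0$ rather than an indeterminate $0/0$. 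That term-wise statement is what Theorem \ref{thm_QuantumSet} consumes ($V_{m1}(h/k)$ must exist, i.e.\ be non-singular, as an evaluated series via Lemma \ref{Vm1}) and what Corollary \ref{cor_hypergeometric} consumes ($V_{m1}(h/k)$ equals the truncated $g_2$ sum). Your argument never touches the denominator factors at all --- it cannot, since an eta-quotient has no zeros or poles on $\H$ and this issue is invisible to radial asymptotics --- so it does not substitute for the paper's proof within the paper's framework.

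Two further concrete problems. First, the lemma covers $m\in T'\setminus\{4\}=\{1,2,3,4',4'',5,6\}$, and your reduction breaks for $m=4',4''$: there $u^{(m1)}_\tau$ equals $\tau/12$ or $5\tau/12$, and $\vartheta(\tau/12;\tau)$, $\vartheta(5\tau/12;\tau)$ are not among the specializations recorded in Lemma \ref{thetas}, so the route to an eta-quotient is unavailable and you would need asymptotics for generalized eta-products, which your sketch does not supply. Second, your closing claim that the congruences defining $S_{31},S_{51},S_{61}$ are ``precisely what force $\Omega_m>0$'' is false: for $m=3$ with $h\equiv 3\pmod 6$ and $k$ odd --- e.g.\ $(h,k)=(3,5)$, so $3/5\notin S_{31}=S'$ --- one has $f_3\sim\eta(\tau)^2\eta(\tau/3)\eta(2\tau)/(\eta(\tau/2)^2\eta(2\tau/3))$ and computes $\Omega_3(3,5)=2+3+\tfrac12-1-\tfrac32=3>0$. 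So positivity of $\Omega_m$ does not cut out the quantum sets; those sets are delimited by the denominator-vanishing analysis (this is exactly the point where the paper says it ``must restrict to $S_{31}$''), to which, again, your method has no access.
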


\begin{proof}
We observe from \eqref{eqn:eta_part} that the product  $(-q^{\frac12};q^{\frac12})_\infty   = \prod_{n\geq 1} (1+q^{\frac{n}{2}})$    appears in the numerator of $f_m(\tau)$.  Thus, as in our analysis of the $g_2$ term, we see that for $\tau=\frac{h}{k}\in S$  the $n=k$ term of this product will be $0$.  Similarly, the $n=k$ term of $(q;q)_\infty$ will also vanish.  Thus to show that $f_m(\tau)=0$ for $\tau\in S_{m1}$ it remains to show that the  products in the denominators of $f_m(\tau)$ are finite and nonzero on $S_{m1}$ for   terms  $1\leq n \leq k$  (when expressed as products indexed by $n$).  We see that the terms appearing in $(a_m^2q^\frac{1}{b_m};q)_\infty$ and $(a_m^{-2}q^{\frac{b_m-1}{b_m}};q)_\infty$, are the squares of terms appearing in the denominators of $g_2(a_mq^\frac{1}{2b_m};q^\frac12)$.  We analyze them similarly as in Lemma \ref{lem_g_2}.  For $\tau=\frac{h}{k}$, we have
\begin{align*}
\left((-1)^m e\left(\frac{h}{b_m k}\right);e\left(\frac{h}{k}\right)\right)_\infty & = \prod_{n\geq 0} \left(1-(-1)^m e\left(\frac{(b_mn+1)h}{b_mk} \right) \right) \\
\left((-1)^m e\left(\frac{(b_m-1)h}{b_m k}\right);e\left(\frac{h}{k}\right)\right)_\infty & = \prod_{n\geq 0} \left(1-(-1)^m e\left(\frac{(b_m(n+1)-1)h}{b_mk} \right) \right).
\end{align*}
Thus for $m$ even we wish   to avoid the existence of an $r\in\mathbb{Z}$ and $0\leq n \leq k$ such at least one of the following hold,
\begin{align*}
h(b_mn+1) &= b_m kr,  \\
h(b_m(n+1) -1) &= b_m kr.
\end{align*}
 For $m=2,4'$ we have that $b_m$ is even and $h$ is odd so this cannot occur.  When $m=4''$, multiplying the equations through by $5$ gives a similar contradiction since $5b_{4''}=12$ is even while $5$, $h$ are odd.  When $m=6$, we have $b_6=3$.  But for $\frac{h}{k}\in S_{61}$, we have $h\equiv \pm 1 \pmod{6}$ and so this can never occur.

When $m$ is odd, we must show there is no $r \in \Z$ such that
\begin{align}
h(b_mn+1) &= b_m k(2r+1), \label{etaodd1} \\
h(b_m(n+1) -1) &= b_m k(2r+1) \label{etaodd2}.
\end{align}
When $m=1,5$ we have that $b_m$ is even and $h$ is odd so this cannot occur. When $m=3$, we have $b_3=3$.  But for $\frac{h}{k}\in S_{31}$, we have that $h\equiv \pm 1 \pmod{6}$, so this can never occur due to different residues modulo $3$.  Notice it is here that we must restrict to $S_{31}$ from $S_{31}'$.  If $h\equiv 3\pmod{6}$ and $k$ is odd, then either $k\equiv 1 \pmod 3$, in which case we can let $n=\frac{k-1}{3}$ and $r=\frac{h-3}{6}$ in the first equation, or $k\equiv 2 \pmod 3$, in which case we can let $n=\frac{k-2}{3}$ and $r=\frac{h-3}{6}$ in the second equation.  Both instances result in a zero in the denominator before termination.
\end{proof}
 
\begin{remark}
Lemmas \ref{lem_g_2} and \ref{lem_eta} imply that for each $m\in T$, $S_{m1}$ is our  largest  possible quantum set for $V_{m1}$.
\end{remark} 

We now prove Theorem \ref{thm_QuantumSet}.

\begin{proof}(Proof of Theorem \ref{thm_QuantumSet})
 Let $m\in T$.   By Lemmas \ref{lem_g_2} and \ref{lem_eta}, we see that each $V_{m1}$ is well-defined for $\tau\in S_{m1}$, but it remains to  be seen that  $V_{m1}$ is well-defined for each $M\tau$, where $M\in G_{m1}$.  We conclude by proving that each set $S_{m1}$ is closed under transformations by the matrices in $G_{m1}$.  Observe that each $G_{m1}$ has two generators, one of the form $\left( \begin{smallmatrix} 1 & 0 \\ A & 1\end{smallmatrix} \right)$ and the other of the form $\left( \begin{smallmatrix} 1 & B \\ 0 & 1\end{smallmatrix} \right)$ for positive integers $A_m,B_m$.
For $h/k \in S$ we have
\[
T_{m,1}(h/k):=\begin{pmatrix} 1 & 0\\ A_m & 1\end{pmatrix}\frac{h}{k} = \frac{h}{k + A_mh}, \qquad
T_{m,2}(h/k):=\begin{pmatrix} 1 & B_m\\ 0 & 1\end{pmatrix}\frac{h}{k}  = \frac{h + B_mk}{k}.
\]
Since $\gcd(h,k)=1$, we have $\gcd(h, A_m k + h) = \gcd(h + B_mk,h) = 1$.  Moreover, we note that $B_m$ is even for each $m$, so when $h$ is odd we have that $h+B_mk$ is odd, and thus $T_m(h/k), T_m'(h/k) \in S$ for all $\tau\in S$.  Thus for $m=1, 2, 4', 4'', 4$ we have that $T_m(h/k), T_m'(h/k) \in S_{m1}$ for all $\tau\in S_{m1}$.  When $m=3,5,6$ we have $B_m=6$ so that $h+B_mk\equiv h \pmod{6}$.  Thus for $m=3,5,6$,  we see that $T_m'(h/k) \in S_{m1}$ for all $\tau\in S_{m1}$.  To see also that $T_m(h/k) \in S_{m1}$ for all $\tau\in S_{m1}$, we only need to observe that in the case $m=5$, when $k$ is even, then $k+2h$ is also even.

Now we need to also consider the inverses
\[
T_{m,1}^{-1}(h/k):=\begin{pmatrix} 1 & 0\\ -A_m & 1\end{pmatrix}\frac{h}{k} = \frac{h}{k - A_mh}, \qquad
T_{m,2}^{-1}(h/k):=\begin{pmatrix} 1 & -B_m\\ 0 & 1\end{pmatrix}\frac{h}{k}  = \frac{h - B_mk}{k}.
\]
When $k - A_mh$ is positive, the same arguments as above go through.  When $k - A_mh$ is negative, we observe that
\[
T_m^{-1}(h/k):=\begin{pmatrix} 1 & 0\\ -A_m & 1\end{pmatrix}\frac{h}{k} = \frac{-h}{A_mh-k},
\]
has a positive denominator, and so again we use the arguments above.
\end{proof}

\subsection{Determining  quantum sets for general $V_{mn}$}\label{sec_qsetvmn}

In Section \ref{sec_quantset1}, we  determined the quantum sets $S_{m1}$ for the function $V_{m1}$.  In this section, we will use \eqref{eq_KangExtm} and \eqref{eq_KangExt4} to determine the more general quantum sets $S_{mn}$ for the functions $V_{mn}$ with $n\neq 1$.  Observe that our previous discussion shows that we  must   require   $S_{mn}\subseteq S_{m1}$ for each $m \in T$. We define the sets $S_{mn}$ for any $m\in T$ and admissible $n$ below;  for completeness, we also include the sets $S_{m1}$ previously determined.  In Lemma \ref{lem_zerofmn}, we establish that these sets are indeed appropriate, by showing that the auxiliary functions $\mathcal F_{mn}$ appearing in  \eqref{eq_KangExtm} and \eqref{eq_KangExt4} vanish at any rational point in $S_{mn}$. 

 We define the 43 subsets  $S_{mn}\subseteq S_{m1}$ by
\begin{align*}
S_{11} , S_{17} , S_{21} , S_{27} , S_{41} , S_{45} , S_{47}  &:= S \\
S_{12} , S_{18} , S_{22} , S_{28} , S_{42} , S_{52}  & := S_{ev} \\
S_{13} , S_{23} , S_{31} ,  S_{34} ,  S_{35} , S_{36} , S_{43} ,  S_{53} ,  S_{61} ,   S_{63} , S_{64} , S_{65} , S_{66}  & := S' \\
S_{14} , S_{15} ,  S_{24}  , S_{26} &:= S_{od} \\
S_{32} , S_{33}  , S_{62} &:= S'\cap S_{ev} \\
S_{37} , S_{68} &:= S'\cap S_{od} \\
S_{44} & :=  S' \cup S_{od}  \\
S_{46} , S_{48} , S_{51} , S_{55} , S_{56} , S_{57} ,  S_{58} &:= S' \cup S_{ev}.
\end{align*}

 \begin{lemma}\label{lem_zerofmn}
 For $m \in T\backslash\{4\}$ and $\frac{h}{k} \in S_{mn}$,  or for $m\in\{4',4''\}$ and $\frac{h}{k} \in S_{4n}$, we have that $$\mathcal F_{mn}\left(\frac{h}{k}\right) = 0.$$
\end{lemma}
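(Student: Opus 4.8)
The plan is to expand every theta function in the quotient (\ref{def_fmndef}) as an infinite product via (\ref{eq:theta}), to expand $\eta^3$ via (\ref{eqn_etadef}) (recalling $\vartheta'(0;\tau)=-2\pi\eta(\tau)^3$, which is the source of the $\eta^3$ in the numerator), and then to track the multiplicity of the ``plain'' factor $P(\tau):=\prod_{n\geq1}(1-q^n)=q^{-1/24}\eta(\tau)$. Each theta splits as $\vartheta(v;\tau)=-iq^{1/8}e^{-\pi iv}\,P(\tau)\,Q_v(\tau)$, where $Q_v(\tau):=\prod_{n\geq1}(1-e^{2\pi iv}q^{n-1})(1-e^{-2\pi iv}q^n)$, so each theta contributes exactly one copy of $P$ while $\eta^3$ contributes $P^3$. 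Hence the numerator of $\mathcal F_{mn}$ carries $P^5$ (from $\eta^3$ and the two numerator thetas) and the denominator carries $P^4$ (from the four denominator thetas). After collecting the various powers of $q$ and roots of unity, this gives
\[
\mathcal F_{mn}(\tau)=\kappa\,q^{\sigma_m}\,P(\tau)\,\frac{Q_{\frac\tau2+u^{(mn)}_\tau}(\tau)\,Q_{u^{(mn)}_\tau-u^{(m1)}_\tau}(\tau)}{Q_{u^{(m1)}_\tau}(\tau)\,Q_{\frac\tau2}(\tau)\,Q_{u^{(mn)}_\tau}(\tau)\,Q_{v^{(mn)}_\tau}(\tau)},
\]
where $\kappa$ is a root of unity and $\sigma_m\in\mathbb Q$, both harmless in the limit $\tau\to\frac hk$. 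The key structural point is that a single factor of $P$ fails to cancel and survives in the numerator; this is the same $\eta$-type factor whose vanishing at rationals already drives Lemma \ref{lem_eta}.

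Next I would argue that $P(\tau)\to0$ as $\tau\to\frac hk$ from $\mathbb H$: at $q=\zeta_k^h$ the factor $(1-q^k)$ of $P$ vanishes (equivalently, $\eta$ vanishes at every rational cusp), exactly the mechanism used in the proofs of Lemmas \ref{lem_g_2} and \ref{lem_eta}. It therefore suffices to show that the remaining ratio of $Q$-factors stays finite and nonzero at $\frac hk\in S_{mn}$, i.e.\ that the four \emph{denominator} arguments $u^{(m1)}_\tau,\ \frac\tau2,\ u^{(mn)}_\tau,\ v^{(mn)}_\tau$ are non-resonant there. Since $Q_v$ is a product over $n$, it can degenerate only when $e^{2\pi iv}q^{n-1}=1$ or $e^{-2\pi iv}q^n=1$ for some $n\geq1$; writing $v=\alpha\tau+\beta$ with $\alpha,\beta\in\frac1N\mathbb Z$ read off from the Appendix and setting $\tau=\frac hk$, these become congruence conditions on $h$ and $k$ of precisely the type analyzed in Lemmas \ref{lem_g_2} and \ref{lem_eta}. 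For instance $Q_{\tau/2}=(q^{1/2};q)_\infty^2$ never vanishes on $S$ because $h$ is odd, matching the computation already present in Lemma \ref{lem_eta}.

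The substance of the argument is thus a case analysis over $m\in T\setminus\{4\}$, with $m=4$ treated through the two summands $\mathcal F_{4'n}$ and $\mathcal F_{4''n}$ of (\ref{eq_KangExt4}), whose denominators involve the $m=4',4''$ data. For each admissible $(m,n)$ one substitutes the explicit $u^{(m1)}_\tau,\,u^{(mn)}_\tau,\,v^{(mn)}_\tau$ and checks that the defining residue conditions of $S_{mn}$ (membership in $S$, $S'$, $S_{ev}$, $S_{od}$, or their intersections and unions) are exactly what forbids a vanishing denominator $Q$-factor; any vanishing \emph{numerator} $Q$-factor only reinforces the conclusion $\mathcal F_{mn}(\frac hk)=0$. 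This reuses verbatim the modular-arithmetic bookkeeping of Lemmas \ref{lem_g_2} and \ref{lem_eta}, now applied to the four denominator arguments of $\mathcal F_{mn}$ rather than to the two Pochhammer symbols of $g_2$ and the product $f_m$.

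The main obstacle is precisely this bookkeeping: one must confirm, uniformly across all the admissible pairs, that each set $S_{mn}$ was chosen so that every denominator theta-argument avoids lattice resonances while the surviving factor $P$ still vanishes. The construction of the $S_{mn}$ as intersections and unions of $S,S',S_{ev},S_{od}$ is engineered exactly so that these two requirements hold simultaneously, so the work is in verifying the cases rather than in any new idea; an alternative, less computational route would invoke Lemma \ref{lem_modfmn}, which shows $\mathcal F_{mn}$ transforms like a weakly holomorphic modular form of weight $1/2$, and then read off a positive order of vanishing at the cusp $\frac hk$, but the direct product computation is cleaner and fits the surrounding lemmas.
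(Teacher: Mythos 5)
Your proposal is correct and follows essentially the same route as the paper: the paper also applies the triple product formula (\ref{eq:theta}) to every $\vartheta$ in (\ref{def_fmndef}), cancels four of the five copies of $(q;q)_\infty$ so that one surviving copy in the numerator vanishes at $\tau=\frac{h}{k}$, and then verifies via the same congruence bookkeeping as in Lemmas \ref{lem_g_2} and \ref{lem_eta} that the denominator factors $(e(u_{\tau}^{(m1)});q)_\infty$, $(q^{\frac12};q)_\infty^2$, and those attached to $u_\tau^{(mn)}$, $v_\tau^{(mn)}$ do not vanish before termination on $S_{mn}$, with $m=4$ handled through $\mathcal F_{4'n}$ and $\mathcal F_{4''n}$. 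Your decomposition is in fact slightly more careful than the paper's display (\ref{eqn_fmnsimple}), which drops the numerator factor coming from $\vartheta(u_\tau^{(mn)}-u_\tau^{(m1)};\tau)$; as you note, numerator factors can only reinforce the vanishing, so this changes nothing.
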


\begin{proof}[Proof of Lemma \ref{lem_zerofmn}]
Note that by applying the triple product formula (\ref{eq:theta}) to each of the $\vartheta$-functions appearing in the definition of $\mathcal F_{mn}$ in (\ref{def_fmndef}), we can cancel the four copies of $(q;q)_\infty$ appearing in the denominator with four of the five copies appearing in the numerator (three of which arise from the function $\eta^3$).   Thus we may write $\mathcal F_{mn}(\tau)$ as a constant multiple of $q$  multiplied by 
 \begin{align}\label{eqn_fmnsimple} 
\frac{(q;q)_\infty }{(q^{\frac12};q)_\infty^2 } \cdot \frac{(e(\frac{\tau}{2} + u_{\tau}^{(mn)});q)_\infty (e(-\frac{\tau}{2} - u_{\tau}^{(mn)});q)_\infty}{ (e(u_{\tau}^{(m1)}); q)_\infty (e(-u_{\tau}^{(m1)})q ; q)_\infty  (e(u_{\tau}^{(mn)}); q)_\infty (e(-u_{\tau}^{(mn)})q ; q)_\infty (e(v_{\tau}^{(mn)}); q)_\infty (e(-v_{\tau}^{(mn)})q ; q)_\infty}.
 \end{align} 
 
Observe that for any $\tau = h/k\in S$, we have that $(q;q)_\infty$ vanishes at the $k$th term  when expanded,   and $(q^{\frac12};q)_\infty^2$ never vanishes.  Moreover, we have already demonstrated in the proof of Lemma \ref{lem_eta} that $(e(u_{\tau}^{(m1)}); q)_\infty (e(-u_{\tau}^{(m1)})q ; q)_\infty $ does not vanish for $\tau = h/k\in S_{m1}$,    as this term appears in the denominator of $f_{m}$. Thus, it suffices to show that when $\tau = h/k \in S_{mn}$ each of the products in
\begin{equation}\label{eqn_fmndenom} 
(e(u_{\tau}^{(mn)}); q)_\infty (e(-u_{\tau}^{(mn)})q ; q)_\infty (e(v_{\tau}^{(mn)}); q)_\infty (e(-v_{\tau}^{(mn)})q ; q)_\infty
\end{equation} 
is non-vanishing for terms $1\leq  s   \leq k$,   when expanded as products indexed by $s$.  Next, we observe that $v_{\tau}^{(mn)}$ depends only on $n$. In particular, 
$v_{\tau}^{(mn)} = \frac{\tau}{c_n}$ when $n$ is odd, and $v_{\tau}^{(mn)} = \frac{\tau}{c_n} - \frac12$ when $n$ is even, where $(c_1,c_2,c_3,c_4,c_5, c_6, c_7, c_8) = (2,2,3,3,4,4,6,6)$.  Thus, for admissible pairs $(m,n)$,
\[
(e(v_{\tau}^{(mn)}); q)_\infty (e(-v_{\tau}^{(mn)})q ; q)_\infty = 
\begin{cases}
(q^{\frac{1}{c_n}};q)_\infty (q^{\frac{c_n-1}{c_n}};q)_\infty & \mbox{ when $n$ odd} \\
(-q^{\frac{1}{c_n}};q)_\infty (-q^{\frac{c_n-1}{c_n}};q)_\infty & \mbox{ when $n$ even}.
\end{cases}
\] 

When $n$ is odd, we have that for $\tau = h/k$,
\[
(q^{\frac{1}{c_n}};q)_\infty (q^{\frac{c_n-1}{c_n}};q)_\infty= \prod_{j\geq 0} \left(1-e\left(\frac{(c_nj+1)h}{c_n k} \right) \right)  \left(1-e\left(\frac{(c_n(j+1)-1)h}{c_nk} \right) \right).
\]
Thus for $n$ odd we wish  to avoid the existence of an $r\in\mathbb{Z}$ and $0\leq j \leq k$ such that at least one of the following hold:
\begin{align*}
h(c_nj+1) &= c_n kr,  \\
h(c_n(j+1) -1) &= c_n kr.
\end{align*}
For $n=1,5,7$ we have that $c_n$ is even and $h$ is odd so this cannot occur.  When $n=3$, we have $c_3=3$.  But for $\frac{h}{k}\in S_{31}$, we have $h\equiv \pm 1 \pmod{6}$ and so we again have that this cannot occur.

When $n$ is even, we have that for $\tau = h/k$,
\[
(-q^{\frac{1}{c_n}};q)_\infty (-q^{\frac{c_n-1}{c_n}};q)_\infty= \prod_{j\geq 0} \left(1-e\left(\frac{(c_nj+1)h}{c_n k}  -\frac12 \right) \right)  \left(1-e\left(\frac{(c_n(j+1)-1)h}{c_nk} - \frac12 \right) \right),
\]
so in this case we need to avoid the existence of an $r\in\mathbb{Z}$ and $0\leq j \leq k$ such that at least one of the following hold,
\begin{align*}
2h(c_nj+1) &= c_n k(2r+1),  \\
2h(c_n(j+1) -1) &= c_n k(2r+1).
\end{align*}
When $n=6$ we have $c_n=4$ and since $h$ is odd so this cannot occur for any element in $S$.  When $n=2$, both equations reduce to the equation $h(2j+1)=k(2j+1)$.  In the definitions of the sets $S_{m2}$, we see that in each case $k$ is even, and so this equation can never be satisfied for an element of $S_{m2}$.  When $n=4$, we have the equations  $2h(3j+1) = 3k(2r+1)$,  and $2h(3j +2) = 3k(2r+1)$.  We see that these can not be satisfied when $h\not \equiv 0 \pmod3$, or when $h\equiv 3 \pmod{6}$ and $k$ odd.  Thus, for elements of $S_{m4}$ they cannot be satisfied.  Similarly, when $n=8$, we have the equations $2h(6j+1) = 6k(2r+1)$,  and $2h(6j +5) = 3k(6r+1)$, which also can't be satisfied when $h\not \equiv 0 \pmod3$.  In this case, they also can't be satisfied when $h\equiv 3 \pmod{6}$ and $k$ even.  The definitions of $S_{m8}$ shows that we are always in one of these cases.  

Thus, we have reduced the problem to showing that when $\tau = h/k \in S_{mn},$ the products
\begin{equation}\label{eqn_fmndenom} 
(e(u_{\tau}^{(mn)}); q)_\infty (e(-u_{\tau}^{(mn)})q ; q)_\infty 
\end{equation} 
are non-vanishing    in their first $k$ terms when expanded. Although at first glance it would seem that we have many cases to consider, in fact we have already done most of the work, we just need to compare each case to the defined set $S_{mn}$.  Comparing the values of $u_{\tau}^{(mn)}$ when $m>1$ to the values of $v_{\tau}^{(mn)}$ that we have already considered, and using that $e(\frac12)=e(-\frac12)$, we see that there are only about a dozen left to consider.  Moreover, the cases that are merely a negative multiple can be reduced fairly easily to the original case.  Thus the only $u_{\tau}^{(mn)}$ we will consider here are $u_{\tau}^{(13)}=\tau/12 +1/2$, $u_{\tau}^{(14)}=\tau/12$, $u_{\tau}^{(15)}=1/2$, and $u_{\tau}^{(4''2)}=5\tau/12 -1/2$.

For $u_{\tau}^{(13)}=\tau/12 +1/2$, \eqref{eqn_fmndenom} becomes
\[
(-q^{\frac{1}{12}};q)_\infty (-q^{\frac{11}{12}};q)_\infty= \prod_{j\geq 0} \left(1-e\left(\frac{(12j+1)h}{12 k}  -\frac12 \right) \right)  \left(1-e\left(\frac{(12(j+1)-1)h}{12k} - \frac12 \right) \right),
\]
and so we  wish   to avoid the existence of an $r\in\mathbb{Z}$ and $0\leq j \leq k$ such that at least one of the following hold,
\begin{align*}
h(12j+1) &= 6k(2r+1),  \\
h(12(j+1) -1) &= 6k(2r+1).
\end{align*}
But since $h$ is odd this can never occur.  

For $u_{\tau}^{(14)}=\tau/12$, \eqref{eqn_fmndenom} becomes
\[
(q^{\frac{1}{12}};q)_\infty (q^{\frac{11}{12}};q)_\infty= \prod_{j\geq 0} \left(1-e\left(\frac{(12j+1)h}{12 k} \right) \right)  \left(1-e\left(\frac{(12(j+1)-1)h}{12 k} \right) \right),
\]
and so we wish  to avoid the existence of an $r\in\mathbb{Z}$ and $0\leq j \leq k$ such that at least one of the following hold,
\begin{align*}
h(12j+1) &= 12 kr,  \\
h(12(j+1) -1) &= 12 kr,
\end{align*}
which again can never occur since $h$ is odd.

For $u_{\tau}^{(15)}=1/2$, \eqref{eqn_fmndenom} becomes
\[
(-1;q)_\infty (-q;q)_\infty= 2\prod_{j\geq 0} \left(1-e\left(\frac{jh}{k} -\frac12\right)  \right)^2,
\]
so we wish  to avoid the existence of an $r\in\mathbb{Z}$ and $0\leq j \leq k$ such that $2hj=k(2r+1)$ holds, which can't occur in $S_{15}$ since $k$ is odd. 

Lastly, when $u_{\tau}^{(4''2)}=5\tau/12 -1/2$, \eqref{eqn_fmndenom} becomes
\[
(-q^{\frac{5}{12}};q)_\infty (-q^{\frac{7}{12}};q)_\infty= \prod_{j\geq 0} \left(1-e\left(\frac{(12j+5)h}{12 k}  -\frac12 \right) \right)  \left(1-e\left(\frac{(12(j+1)-5)h}{12k} - \frac12 \right) \right),
\]
and so we  wish  to avoid the existence of an $r\in\mathbb{Z}$ and $0\leq j \leq k$ such that at least one of the following hold,
\begin{align*}
h(12j+5) &= 6k(2r+1),  \\
h(12(j+1) -5) &= 6k(2r+1),
\end{align*}
which can never occur since $h$ is odd.
\end{proof}

\section{Quantum modularity of the $V_{mn}$}\label{sec_quantum}

We now make more precise the notion of a quantum modular form. For $k\in\frac12\mathbb Z$, a \emph{quantum modular form of weight $k$ on the set $S$ for the group $G$} is a complex-valued function $f$ such that $S$ is a quantum set for $f$ with respect to the group $G\subseteq \textnormal{SL}_2(\mathbb Z)$. Further, for all $\gamma = \sm{a}{b}{c}{d} \in G$, and for all $x\in S$   ($x\neq -\frac{d}{c}$),   the functions $$ h_{f,\gamma}(x) := f(x) - \epsilon(\gamma)(cx+d)^{-k}f\left(\frac{ax+b}{cx+d}\right)$$ are suitably continuous or analytic in $\mathbb R$, as defined by Zagier in \cite{Zagier}.  In this paper, we will consider real analytic functions $h_{f,\gamma}$.     The $\epsilon(\gamma)$ are appropriate complex numbers, such as those that arise naturally in the theory of half-integer weight modular forms.

In this section, we prove Theorem \ref{thm_quantum} and Proposition \ref{prop_eichlere}, the first of which in particular establishes the quantum modularity of the functions $V_{mn}$.    We begin by defining for $m\in T$ the numbers
\begin{align*}\begin{array}{llll}
\ell_m:=\begin{cases}2, & m=1,3,5, \\ 1, & m=2,4,6,\end{cases}&
a_m := \begin{cases}8, & m=1,2, \\ 3, & m=3,6, \\ 24, & m=4, \\ 12, & m=5, \end{cases} &
b_m := \begin{cases} \frac{a_m}{2}, & m=1,2,4,5, \\ 2a_m, & m=3,6, \end{cases} &
c_m := \begin{cases} a_m, & m=1,2,4,5, \\ 2a_m, & m=3,6,\end{cases}
\end{array}\end{align*}
 and let $\ell_{4'}=\ell_{4''} :=1, b_{4'}=b_{4''}:=12,$ and $a_{4'}=a_{4''}=24$.
 We define the following groups  \begin{align*}\begin{array}{ll}
  G_{12}, G_{15}, G_{22}, G_{26} &:=\left<\left(\begin{smallmatrix} 1 & 0\\ 2 & 1 \end{smallmatrix}\right), \left(\begin{smallmatrix} 1 & 4\\ 0 & 1 \end{smallmatrix}\right) \right> \subset \Gamma_0(2) \cap \Gamma^0(4),\\ \ \\
  \begin{array}{l} G_{13}, G_{14}, G_{17}, G_{18}, G_{23}, G_{24}, G_{27}  \\ \ \ \ \  G_{28}, G_{35}, G_{36},  G_{4n}, G_{55}, G_{56}, G_{65}, G_{66}\end{array} & :=\left<\left(\begin{smallmatrix} 1 & 0\\ 2 & 1 \end{smallmatrix}\right), \left(\begin{smallmatrix} 1 & 12\\ 0 & 1 \end{smallmatrix}\right) \right> \subset \Gamma_0(2) \cap \Gamma^0(12),\\ \ \\
 \begin{array}{l} G_{32} , G_{33} , G_{34} , G_{37}, G_{52}, G_{53} \\ \ \ \ \ G_{57}, G_{58}, G_{62}, G_{63}, G_{64}, G_{68}\end{array} & :=\left<\left(\begin{smallmatrix} 1 & 0\\ 2 & 1 \end{smallmatrix}\right), \left(\begin{smallmatrix} 1 & 6\\ 0 & 1 \end{smallmatrix}\right) \right> \subset \Gamma_0(2) \cap \Gamma^0(6).\end{array}
\end{align*}
The sets $G_{m1}$ are as defined in Theorem \ref{thm_QuantumSet}, and the sets $G_{4n}$  above are defined for any admissible $n$ when $m$ equals $4$.   We  also define the constants \begin{align*} \kappa_{1n} &:=\!\begin{cases} 1, & n\in\{2,5\}, \\ 3, & n \in\{3,4,7,8\}, \end{cases}  \ \
 \kappa_{2n} :=\!\begin{cases} 1, & n\in\{2,6\}, \\ 3, & n \in\{3,4,7,8\},  \end{cases}   \ \
 \kappa_{3n} :=\!\begin{cases} 1, & n\in\{2,3,4,7\}, \\ 2, & n \in\{5,6\},  \end{cases} \\
 \kappa_{5n} &:= \!\begin{cases} 1, & n\in\{2,3,7,8\}, \\ 2, & n \in\{5,6\}, \end{cases} \ \
 \kappa_{6n} := \!\begin{cases} 1, & n\in\{2,3,4,8\}, \\ 2, & n \in\{5,6\}, \end{cases}
 \end{align*}  as well as $\kappa_{m1}=\kappa_{4n} = \kappa_{4'n}=\kappa_{4''n} := 1$ for any admissible pair $(m,1)$, $(4,n)$, $(4',n)$ or $(4'',n)$.
We recall that for $r\in\mathbb Z$, we let $M_r:=\sm{1}{0}{r}{1}$.

In Section \ref{sec_qs1}, we first sketch the general proof  of Theorem \ref{thm_quantum}   when $m\in T$ and $n=1$, and then provide details for the case when $(m,n)=(1,1)$.   After establishing the result for these pairs $(m,n)$, in Section \ref{sec_qmfvmnall}, we deduce the result for all remaining pairs $(m,n)$.  In Section \ref{sec_qeichlere}, we prove Proposition \ref{prop_eichlere}.

\subsection{ Proof of Theorem \ref{thm_quantum} for $(m,n) = (m,1)$}\label{sec_qs1}
\begin{proof}[General Proof of Theorem \ref{thm_quantum}   when    $m\in T, n=1$]  For $ r  \in \mathbb N$ we have $M_{ r}= S T^{-{r  }}S^{-1}$,
where $S=\sm{0}{-1}{1}{0}$ and $T=\sm{1}{1}{0}{1}$, and we define $\tau_{ r  } := T^{-{ r }}S^{-1}\tau = -1/\tau -  r  $. Using the fact that $M_{r  }\tau = S\tau_{r  }$, we find by straightforward but lengthy calculations using the expressions for $V_{m1}$ given in (\ref{eq_Vnotation}) (and the Appendix) combined with
 Lemma \ref{lem_mu}   that

\begin{align}\label{eqn_FEmij} V_{m1   }(M_{\ell_m}\tau) = \zeta_8^{2-\ell_m}(\ell_m\tau+1)^{\frac12} V_{m1   }(\tau) + \mathcal I_m(\tau) + \mathcal J_m(\tau). \end{align}
The functions $\mathcal I_m$ and $\mathcal J_m$ are defined by Mordell integrals $h(z;\tau)$, which we then simplify to Eichler integrals of weight $3/2$ unary theta functions $g_{a,b}$, using either   Theorem \ref{thm_z116}   (for $m=2,4,6$) or Lemma \ref{lem_Zlem} (for $m=1,3,5$).  We summarize these facts in the following table:
{\begin{align*}\begin{array}{lclcl}
\mathcal I_1(\tau) &:=&\displaystyle -\frac{\zeta_8}{2i}   e\left(\tfrac{1}{8\tau}\right) \sqrt{-i\tau_2} \ h\left( \tfrac{\tau_2}{2}+\tfrac14;\tau_2\right) &=& \displaystyle\frac{i}{2}  \sqrt{  2\tau+1 }\displaystyle  \int_{\frac12}^{0}  \frac{ g_{\frac14,0}\left(u\right)}{\sqrt{-i(u+\tau)}}du  + \frac{i}{2} \sqrt{ -i\tau_2}, \\
 \mathcal J_1(\tau) &:=& \displaystyle \frac{1}{2i} q^{-\frac{1}{32}}\sqrt{2\tau+1} \  h\left(\tfrac{\tau}{4}-\tfrac12;\tau\right) &=&  \displaystyle \frac{i}{2 }  \sqrt{2\tau+1}   \displaystyle \int_{0}^{i\infty} \frac{ g_{\frac14,0}\left(u\right)}{\sqrt{-i( u+\tau )}}du  -\frac{i}{2} \sqrt{-i\tau_2}, \\
 \mathcal I_2(\tau) &:=& \displaystyle \frac12 \sqrt{-i\tau_1}\ h\left(\tfrac{1}{4}  ;\tau_1\right) &=& \displaystyle\frac{i}{2} \sqrt{\tau+1 } \int_{1}^0 \frac{  g_{\frac14,\frac12}(u)  }{\sqrt{-i(u+\tau)}}du ,  \\
 \mathcal J_2(\tau) &:=&\displaystyle \frac{-\zeta_8}{2}q^{-\frac{1}{32}}\sqrt{\tau+1 }  \ h\left(\tfrac{ \tau}{4};\tau\right)  &=& \displaystyle\frac{i}{2}\sqrt{\tau+1} \int_{0}^{i\infty} \frac{ g_{\frac14,\frac12}(u)  }{\sqrt{-i(u+\tau)}}du,\\
 \mathcal I_3(\tau) &:=&\displaystyle -\frac{\zeta_6}{2i}   e\left(\tfrac{1}{8\tau}\right) \sqrt{-i\tau_2} \ h\left( \tfrac{\tau_2}{2}+\tfrac16;\tau_2\right) &=& \displaystyle\frac{i}{2}  \sqrt{  2\tau+1 }\displaystyle  \int_{\frac12}^{0}  \frac{ g_{\frac13,0}\left(u\right)}{\sqrt{-i(u+\tau)}}du  + \frac{i}{2} \sqrt{ -i\tau_2}, \\
 \mathcal J_3(\tau) &:=&\displaystyle \frac{1}{2i} q^{-\frac{1}{72}}\sqrt{2\tau+1} \  h\left(\tfrac{\tau}{6}-\tfrac12;\tau\right) &=&  \displaystyle \frac{i}{2 }  \sqrt{2\tau+1}   \displaystyle \int_{0}^{i\infty} \frac{ g_{\frac13,0}\left(u\right)}{\sqrt{-i( u+\tau )}}du  -\frac{i}{2} \sqrt{-i\tau_2}, \\
 \mathcal I_4(\tau) &:= &\displaystyle \frac12 \sqrt{-i\tau_1}\left(h\left(\tfrac{5}{12}  ;\tau_1\right)+h\left(\tfrac{1}{12};\tau_1\right)\right)
 &=& \displaystyle \frac{i\zeta_8}{2}\sqrt{\tau+1} \int_{1}^0 \frac{ \zeta_{24}^{-1}g_{\frac{1}{12},\frac12}(u) + \zeta_{24}^{-5} g_{\frac{5}{12},\frac12}(u)}{\sqrt{-i(u+\tau)}}du, \\
 \mathcal J_4(\tau) &:=&\displaystyle \frac{-\zeta_8}{2}\sqrt{\tau+1}\left( q^{\frac{-25}{288}} h\left(\tfrac{5\tau}{12};\tau\right) + q^{\frac{-1}{288}}h\left(\tfrac{\tau}{12};\tau\right)\right) &=& \displaystyle \frac{i\zeta_8}{2}\sqrt{\tau+1} \int_{0}^{i\infty} \frac{ \zeta_{24}^{-1}g_{\frac{1}{12},\frac12}(u) + \zeta_{24}^{-5} g_{\frac{5}{12},\frac12}(u)}{\sqrt{-i(u+\tau)}}du,  \\
  \mathcal I_5(\tau) &:=& \displaystyle  -\frac{\zeta_6^{-1}}{2}e\left(\tfrac{1}{8\tau}\right) \sqrt{-i\tau_2} \ h\left(\tfrac{\tau_2}{2}+\tfrac13  ;\tau_2\right)&=& \displaystyle   \frac{i}{2}\sqrt{ 2\tau+1 } \int_{\frac12}^0 \frac{ g_{\frac16,0}(u)}{\sqrt{-i(u+\tau)}}du + \frac{i}{2} \sqrt{-i\tau_2},
  \\
 \mathcal J_5(\tau) &:= & \displaystyle \frac{1}{2i}q^{\frac{-1}{18}}\sqrt{2\tau+1 } \ h\left(\tfrac{\tau}{3}-\tfrac12;\tau\right) &=& \displaystyle    \frac{i}{2}\sqrt{2\tau+1 } \int_{0}^{i\infty} \frac{ g_{\frac16,0}(u)}{\sqrt{-i(u+\tau)}}du - \frac{i}{2} \sqrt{-i\tau_2}, \\
  \mathcal I_6(\tau) &:=&\displaystyle \frac12 \sqrt{-i\tau_1}\ h\left(\tfrac{1}{6}  ;\tau_1\right) &=& \displaystyle\frac{i\zeta_{24}^{-1}}{2} \sqrt{\tau+1 } \int_{1}^0 \frac{ g_{\frac13,\frac12}(u)  }{\sqrt{-i(u+\tau)}}du ,  \\
 \mathcal J_6(\tau) &:=&\displaystyle \frac{1}{2i}\zeta_{8}^{-1}q^{-\frac{1}{72}}\sqrt{\tau+1 }  \ h\left(\tfrac{ \tau}{6};\tau\right)  &=& \displaystyle\frac{i\zeta_{24}^{-1}}{2}\sqrt{\tau+1} \int_{0}^{i\infty} \frac{ g_{\frac13,\frac12}(u)  }{\sqrt{-i(u+\tau)}}du,\\
\end{array} \end{align*} \captionof{table}{Mordell and Eichler integrals $\mathcal I_m$ and $\mathcal J_m$} \label{table_IJ} }

For $\tau \in \mathbb H$, the transformation law in part (i) of Theorem \ref{thm_quantum} when $m\in\{1,3,5\}$ and $n=1$, and the transformation law in part (ii) of Theorem \ref{thm_quantum} when $m \in \{2,4,6\}$ and $n=1$ (both of which pertain to $V_{m1}(M_{\ell_m}\tau),$ $m\in T$) now follow from (\ref{eqn_FEmij}), Table \ref{table_IJ}, and Lemma \ref{E-g}.  The transformation law in part (i) of Theorem \ref{thm_quantum} for $\tau \in \mathbb H$ when $m\in\{2,4,6\}$ and $n=1$ follows after a short calculation by iterating the transformation law given in part (ii),  applying Lemma \ref{lem_gabprop}, and simplifying.

\hspace{2ex} The transformation law (under $\tau \to \tau + b_m$) in part (iii) of Theorem \ref{thm_quantum} follows for $\tau \in \mathbb H$ by a direct calculation using Lemma \ref{lem_mu}.

\hspace{2ex} Having established parts (i), (ii), and (iii) of Theorem \ref{thm_quantum} for $\tau \in \mathbb H$ for $n=1$, we have continuation to $\tau=x \in S_{m1}\setminus\{-\frac{1}{2}\}$ in part (i), to $\tau=x \in S_{m1}\setminus\{-1\}$ in part (ii), and to $x\in S_{m1}$ in part (iii), by  Theorem \ref{thm_QuantumSet} and  the argument given in Section \ref{sec_quantumsets}.  As argued in \cite{BR, BOPR, FOR, ZagierV, Zagier}, for example, the integrals appearing in parts (i) and (ii) of Theorem \ref{thm_quantum} are real analytic functions, except at $-1/2$ and $-1$ (respectively).
\end{proof}
 
\begin{proof}[Detailed Proof of Theorem \ref{thm_quantum} for  $(m,n)=(1,1)$]
 As summarized in the Appendix  or (\ref{eq_Vnotation}), we may write
\begin{eqnarray}V_{11}(\tau)= -q^{-\frac{1}{32}}\mu(\tau/4 + 1/2, \tau/2 ; \tau).\label{V1tau}\end{eqnarray}
Thus, we have
   \begin{align}\notag V_{11}\left(M_2\tau\right)&=  -e\left(-\frac{1}{32}M_2\tau\right) \mu\left(\frac{S\tau_2}{4} + \frac12,\frac{S\tau_2}{2};S\tau_2\right)  \\
   \notag &= -e\left(-\frac{1}{32}M_2\tau\right)e\left(-\frac{1}{2\tau_2}\left(\frac14+\frac{\tau_2}{2} \right)^2\right) \\ \notag & \hspace{.3in} \times \sqrt{-i\tau_2} \left(-\mu\left(-\frac14 + \frac{\tau_2}{2},-\frac12;\tau_2\right) + \frac{1}{2i}h\left(\frac14 + \frac{\tau_2}{2};\tau_2\right)\right) \\
   \label{eqn_Ftrans} &= \zeta_8 e\left(\frac{1}{8\tau}\right)\sqrt{-i\tau_2}\mu\left(-\frac14 + \frac{\tau_2}{2},-\frac12;\tau_2\right) +  \mathcal I_1(\tau),
\end{align}
   where $$\mathcal I_1(\tau) := -\frac{1}{2i}\zeta_8  e\left(\frac{1}{8\tau}\right) \sqrt{-i\tau_2}h\left( \frac{\tau_2}{2}+\frac14;\tau_2\right).$$
   Here, we have used  Lemma \ref{lem_mu}(6).  Next, recalling that $\tau_2 = S^{-1}\tau - 2$, we (repeatedly) apply  Lemma \ref{lem_mu}(5) followed by a second application of  Lemma \ref{lem_mu}(6), as well as  Lemma \ref{lem_mu}(1, 3), and find after some simplification that (\ref{eqn_Ftrans}) equals
   \begin{align}\label{eqn_transFE1H} (2\tau+1)^{\frac12} V_{11}(\tau)  + \mathcal I_1(\tau) + \mathcal J_1(\tau),
   \end{align} where $$ \mathcal J_1(\tau) := \frac{1}{2i} e\left(-\frac{\tau}{32}\right)(2\tau+1)^{\frac12} h\left(\frac{\tau}{4}-\frac12;\tau\right).$$

We re-write $\mathcal I_1(\tau)$ using part (ii) of Lemma \ref{lem_Zlem}, and obtain after some simplification that $\mathcal I_1(\tau)$ equals
   \begin{align}\notag   -\frac{1}{2} &\sqrt{-i\tau_2} \int_{\frac12}^{0} \frac{g_{1,\frac14}\left(2-\frac{1}{u}\right)}{\sqrt{i(u^{-1}+\tau^{-1})}}\frac{du}{u^2}  + \frac{i}{2} \sqrt{-i\tau_2}  \\
  \notag &=   -\frac{1}{2} \sqrt{-i\tau_2} \int_{\frac12}^{0} \frac{g_{1,\frac14}\left(-\frac{1}{u}\right)}{\sqrt{i(u^{-1}+\tau^{-1})}}\frac{du}{u^2} + \frac{i}{2} \sqrt{-i\tau_2}\\
  \notag &=  -\frac{1}{2} \sqrt{-i\tau_2} (-i)^{\frac32} \int_{\frac12}^{0} \frac{g_{\frac34,1}\left(u\right)}{\sqrt{i(u^{-1}+\tau^{-1})}}\frac{du}{u^{\frac12}} + \frac{i}{2} \sqrt{-i\tau_2} \\
   \notag &=  -\frac{1}{2} \sqrt{i\tau_2\tau} (-i)^{\frac32} \int_{\frac12}^{0}\frac{g_{\frac34,1}\left(u\right)}{\sqrt{-i(u+\tau)}} du + \frac{i}{2} \sqrt{-i\tau_2} \\
 \label{eqn_transH1} &=   \frac{1}{2} \sqrt{  2\tau+1 }  \int_{\frac12}^{0} \frac{g_{\frac34,1}\left(u\right)}{\sqrt{-i(u+\tau)}}  du + \frac{i}{2} \sqrt{-i\tau_2}
   \end{align}
   where we have used Lemma \ref{lem_gabprop}.
  We also re-write $\mathcal J_1(\tau)$ using part (iii) of Lemma \ref{lem_Zlem}, and obtain after some simplification that $\mathcal J_1(\tau)$ equals
   \begin{align}\label{eqn_transH2} \frac{1}{2 }  \sqrt{2\tau+1}   \int_{0}^{i\infty} \frac{g_{\frac34,1}\left(u\right)}{\sqrt{-i( u+\tau )}}du  -\frac{i}{2} \sqrt{-i\tau_2}.
   \end{align}   Next, we re-write $g_{3/4,1}(z) = i g_{1/4,0}(\tau) =  \frac{i}{4}E_1(z/32)$, where we have used Lemma \ref{lem_gabprop} and Lemma \ref{E-g}.
   Combining this with (\ref{eqn_transFE1H}), (\ref{eqn_transH1}) and (\ref{eqn_transH2}) proves the first transformation law (under $\tau \to \tau/(2\tau+1)$) given in part (i) of  Theorem \ref{thm_quantum} for $\tau \in \mathbb H$.

 For the second transformation law (under $\tau \to \tau + 4$) in part (iii) of Theorem \ref{thm_quantum}, we again use Lemma \ref{lem_mu}.   From (\ref{V1tau}), we have
\begin{align*}
V_{11}(\tau+4)&=-\zeta_8^{-1}q^{\frac{1}{32}}\mu\left(\frac{\tau}{4}+1+\frac12,\frac{\tau}{2}+2,\tau+4\right)\\
&=-\-\zeta_8^{-1}q^{\frac{1}{32}}\mu\left(\frac{\tau}{4}+\frac12,\frac{\tau}{2},\tau\right)\\
&=\zeta_8^{-1}V_{11}(\tau),
\end{align*}
as desired. We have continuation to $\tau=x \in S_{11}\setminus\{-\frac12\}$ by  Theorem \ref{thm_QuantumSet} and   the argument in Section \ref{sec_quantumsets} .
         \end{proof}

\subsection{Proof of Theorem \ref{thm_quantum} for $n\neq 1$}\label{sec_qmfvmnall}
To prove the theorem in the remaining cases, we establish Lemma \ref{lem_modfmn} below, which shows that the auxiliary functions $\mathcal F_{mn},$ defined in \eqref{def_fmndef}, are weakly holomorphic modular forms, and provides explicit transformation properties.  
\begin{lemma}\label{lem_modfmn} The functions $ \mathcal F_{mn}$ are weakly holomorphic modular forms of weight $1/2$.  In particular, for $\tau \in \mathbb H$, the following are true.
 \ \\ \textnormal{(i)} \ For $m\in T' \setminus\{4\}$, for each admissible $n$, we have that $$
 \mathcal F_{mn}(\tau) +i^{\ell_m} (2\tau+1)^{-\frac12} \mathcal F_{mn}\left(\frac{\tau}{2\tau+1}\right) = 0.$$
 \noindent \textnormal{(ii)} \  For  $m \in T' \setminus \{4\}$, for each admissible $n$,  we have that $$\mathcal F_{mn}(\tau) - \zeta_{a_m}^{\kappa_{mn}}\mathcal F_{mn}(\tau+ \kappa_{mn} b_m)   = 0.$$
  \end{lemma}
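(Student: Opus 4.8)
The plan is to deduce both identities from the $\mu$-function transformation laws, exploiting the fact that every quantity that could obstruct modularity depends only on $m$ and not on $n$. First I would record that $\mathcal{F}_{mn}$ is holomorphic on $\mathbb{H}$. Writing the completion as $\widehat{V}_{mn}=w_m q^{t_m}\widehat{\mu}(u^{(mn)}_\tau,v^{(mn)}_\tau;\tau)$ and recalling from \eqref{eq:mu_tilde} that $\widehat{\mu}=\mu+\tfrac{i}{2}R(u-v;\cdot)$, the non-holomorphic part of $\widehat{V}_{mn}$ is $w_m q^{t_m}\tfrac{i}{2}R(D^{(m)}_\tau;\tau)$, where $D^{(m)}_\tau=u^{(mn)}_\tau-v^{(mn)}_\tau$ is independent of $n$ (as established in the proof of Theorem \ref{thm_modvij}). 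Since the prefactor $w_m q^{t_m}$ also depends only on $m$, the non-holomorphic parts of $\widehat{V}_{mn}$ and $\widehat{V}_{m1}$ coincide, so $\mathcal{F}_{mn}=V_{mn}-V_{m1}=\widehat{V}_{mn}-\widehat{V}_{m1}$ is holomorphic and is annihilated by $\Delta_{1/2}$. Combined with its explicit $\eta$--$\vartheta$ quotient form \eqref{def_fmndef} (which controls the growth at cusps) and the modularity of the two completions on $A_{mn}\cap A_{m1}$, this shows $\mathcal{F}_{mn}$ is a weakly holomorphic modular form of weight $1/2$.

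For part (i), I would transform $V_{mn}(M_2\tau)$, where $M_2\tau=\tfrac{\tau}{2\tau+1}$, exactly as in the $n=1$ computation of Section \ref{sec_qs1}: writing $M_2=ST^{-2}S^{-1}$ and applying Lemma \ref{lem_mu}(6) for each inversion, Lemma \ref{lem_mu}(5) for each unit translation, and the elliptic relations Lemma \ref{lem_mu}(1)--(3) to restore arguments. Each use of Lemma \ref{lem_mu}(6) contributes a Mordell integral $\tfrac{1}{2i}h(\,\cdot;\cdot\,)$ whose argument is the difference of the two elliptic variables, namely $\pm D^{(m)}_\tau$, and each use of Lemma \ref{lem_mu}(5) multiplies only by $e^{-\pi i/4}$. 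Crucially, although $M_2\notin A_{mn}$ (so the shift parameters need not be integers), Lemma \ref{lem_invuv} applies with real shift constants and gives that the relevant differences $\widetilde d^{(mn)}_{\gamma,\tau}$ and $\delta^{(mn)}_\gamma$ are $n$-independent; hence both the multiplicative automorphy factor and the additive Mordell correction are the same for $V_{mn}$ and $V_{m1}$. Subtracting and using the holomorphicity of $\mathcal{F}_{mn}$, the correction cancels and leaves $\mathcal{F}_{mn}(M_2\tau)=-i^{-\ell_m}(2\tau+1)^{1/2}\mathcal{F}_{mn}(\tau)$, which rearranges to the stated identity.

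Part (ii) is a pure translation $\tau\mapsto\tau+\kappa_{mn}b_m$; by the definitions of $b_m$ and $\kappa_{mn}$ the translation amount is an integer and returns the elliptic arguments $u^{(mn)}_\tau,v^{(mn)}_\tau$ to themselves modulo $\mathbb{Z}\tau+\mathbb{Z}$. Since there is no inversion, Lemma \ref{lem_mu}(6) is never invoked and no Mordell integral arises: iterating Lemma \ref{lem_mu}(5) together with the quasi-periodicity relations Lemma \ref{lem_mu}(1)--(2) needed to bring $u^{(mn)}_{\tau+\kappa_{mn}b_m},v^{(mn)}_{\tau+\kappa_{mn}b_m}$ back to $u^{(mn)}_\tau,v^{(mn)}_\tau$ yields $\mathcal{F}_{mn}(\tau+\kappa_{mn}b_m)=\zeta_{a_m}^{-\kappa_{mn}}\mathcal{F}_{mn}(\tau)$ once the accumulated roots of unity and $q$-power factors are collected. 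This is exactly the claimed transformation.

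The main obstacle I anticipate is the bookkeeping of multiplier systems. The conceptual point—that the shadow-dependent error terms are governed solely by $D^{(m)}_\tau$ and hence cancel in $V_{mn}-V_{m1}$—is uniform across all admissible $(m,n)$, but verifying that the roots of unity coming from $\eta$ (via Lemma \ref{lem_etatransf}), from $\vartheta$ (via Lemma \ref{lem_thetatransf} and Lemma \ref{lem_mu}), and from the quasi-periodicity shifts collapse to precisely $-i^{-\ell_m}$ in (i) and $\zeta_{a_m}^{-\kappa_{mn}}$ in (ii) must be carried out case by case, since $\ell_m,a_m,b_m$ and the $n$-dependence $\kappa_{mn}$ are defined through several residue classes of $m$ and $n$. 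This constant-matching, rather than any structural difficulty, is the labor-intensive step.
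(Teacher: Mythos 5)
Your strategy is sound and genuinely different from the paper's. The paper never invokes the completions $\widehat V_{mn}$ here: it works directly with the $\eta$--$\vartheta$ quotient representation (\ref{def_fmndef}) of $\mathcal F_{mn}$, applies the transformation laws of $\eta$ and $\vartheta$ under $M_2$ (the specializations (\ref{eqn_thetaprops1})--(\ref{eqn_thetaprops2}), with $\psi^3(M_2)=-i$) and under integer translations (the relations (\ref{eqn_etvart})), and then verifies case by case that the resulting multipliers $\rho_{mn}(M_2)$, and the integer shifts of $u^{(mn)}_\tau, v^{(mn)}_\tau$ in part (ii), collapse to the claimed constants. You instead exploit the harmonic Maass structure: since $u^{(mn)}_\tau - v^{(mn)}_\tau = D^{(m)}_\tau$ and the prefactor $w_m q^{t_m}$ are independent of $n$, the non-holomorphic parts of $\widehat V_{mn}$ and $\widehat V_{m1}$ coincide, so $\mathcal F_{mn}=\widehat V_{mn}-\widehat V_{m1}$, and the transformation of $\mathcal F_{mn}$ follows from the $n$-independence of the completed multipliers together with the $n=1$ computations of Section \ref{sec_qs1}. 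Your route buys conceptual economy --- the cancellation of every Mordell/period correction is automatic and uniform in $n$ --- while the paper's route buys self-containedness and fully explicit multipliers without leaning on Theorem \ref{thm_modvij} or Section \ref{sec_qs1} as inputs.

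Two repairs are needed to make your sketch airtight. First, $M_2=\sm{1}{0}{2}{1}$ in fact lies in \emph{every} group $A_{mn}$ of Table \ref{Atable} (each is $\Gamma^1(N)$ or $\Gamma^1(N)\cap\Gamma_0(2)$, and $M_2$ has $a=d=1$, $b=0$, $c$ even), so your worry that $M_2\notin A_{mn}$ is misplaced; better, this membership lets you quote the exact transformation (\ref{eqn_vmnhattr}) for both $\widehat V_{mn}$ and $\widehat V_{m1}$ and subtract, rather than re-running the Section \ref{sec_qs1} manipulations. If you do that, note that the sign $(-1)^{k^{(mn)}_{\gamma}+\ell^{(mn)}_{\gamma}+r^{(mn)}_{\gamma}+s^{(mn)}_{\gamma}}$ in (\ref{eqn_vmnhattr}) is a priori $n$-dependent; it is $n$-independent because $k+\ell+r+s\equiv(k-r)+(\ell-s)\pmod 2$ and Lemma \ref{lem_invuv} controls both differences --- your text cites only $\delta^{(mn)}_{\gamma}$, so the $\rho^{(mn)}_{\gamma}$ and parity step should be made explicit. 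Second, if you insist on working with $\mu$ itself as in Section \ref{sec_qs1}, then Lemma \ref{lem_mu}(1)--(3) does not suffice to ``restore arguments'' for general $n$: those relations give only integer shifts and negation, whereas restoring $u^{(mn)}_\tau, v^{(mn)}_\tau$ in general requires shifts by multiples of $\tau$, which the paper's quoted Lemma \ref{lem_mu} omits (in Zwegers's thesis such shifts carry inhomogeneous correction terms). Those corrections again depend only on $u-v$, hence cancel in $V_{mn}-V_{m1}$, but you must either say so or, more cleanly, work with $\widehat\mu$ via Lemmas \ref{lem_muhat} and \ref{lem_transhat}, where all $\mathbb{Z}\tau+\mathbb{Z}$ shifts are homogeneous. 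With these repairs, what remains is exactly the constant-matching you identify, which is comparable in labor to the paper's computation of the $\rho_{mn}$.
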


We postpone the proof of Lemma \ref{lem_modfmn} until the end of this section, and first prove Theorem \ref{thm_quantum} for the remaining functions $V_{mn}$ (i.e. $n\neq 1$).
\begin{proof}[Proof of Theorem \ref{thm_quantum} for $n\neq 1$] We begin by re-writing the functions $V_{mn}$ using (\ref{eq_KangExtm}) and (\ref{eq_KangExt4}), which we previously established.    
Note that $\mathcal F_{m1}(\tau) $ is identically equal to zero for each $m\in T'\setminus\{4\}$.   We next use the fact that for fixed $m$ and each admissible $n$, we have that
$S_{mn} \subseteq S_{m1}$ and $G_{mn} \subseteq G_{m1}$.  Previously, in Section \ref{sec_quantumsets}, we showed that if $x\in S_{m1}$, then $Mx \in S_{m1}$ for any $M\in G_{m1}$.  A nearly identical argument shows that for fixed $m$ and each admissible $n$, that if $x\in S_{mn}$, then $Mx \in S_{mn}$ for any $M\in G_{mn}$.     Thus, for fixed $m$ and each admissible $n$ ($n\neq 1$),  the quantum modular transformation properties given in parts (i) and (iii) of Theorem \ref{thm_quantum} for the functions $V_{mn}$ with $n\neq 1$ now follow from the transformation properties established in Section \ref{sec_qs1}  for the functions $V_{m1}$ in Theorem \ref{thm_quantum} restricted to the subsets $S_{mn}\subseteq S_{m1}$ and the subgroups $G_{mn}\subseteq G_{m1}$, combined with Lemma \ref{lem_modfmn} and Lemma \ref{lem_zerofmn}.
This concludes the proof of Theorem \ref{thm_quantum} in the remaining cases $(n\neq 1)$.  \end{proof}

 \begin{proof}[Proof of Lemma \ref{lem_modfmn}]
\ \\ \emph{Proof of part (i).}  The proof of part (i) of Lemma \ref{lem_modfmn}  makes use of Lemma \ref{lem_etatransf} and Lemma \ref{lem_thetatransf}.   We divide our proof into six cases, corresponding to six possible values of $m$.  For  $m=1$, we give an explicit proof for each admissible $n$.  For the remaining cases ($m\in\{2,3,4',4'',5,6\}$), we provide a sketch of proof for brevity's sake, as the proofs in these cases are nearly identical to the case $m=1$.  To begin, we list some transformation properties of certain specialized Jacobi $\vartheta$-functions under $ M_2:= \sm{1}{0}{2}{1}$ which we will make use of:
 \begin{align}
 \label{eqn_thetaprops1}  \vartheta\left(\alpha M_2\tau + \frac12;M_2\tau\right) &= -iq^{-\frac12-\alpha} (2\tau+1)^{\frac12} e\left(\frac{\left((\alpha+1)\tau+\frac12\right)^2}{2\tau+1}\right) \vartheta\left(\alpha\tau + \frac12;\tau\right), \\
\label{eqn_thetaprops2}
\vartheta\left(\alpha M_2\tau ;M_2\tau\right) &=-i(2\tau+1)^{\frac12}e\left(\frac{\left(\alpha\tau\right)^2}{2\tau+1}\right) \vartheta\left(\alpha\tau;\tau\right),
  \end{align}
 where $\alpha \in \mathbb C$.  To establish (\ref{eqn_thetaprops1}) and (\ref{eqn_thetaprops2}), we have used Lemma \ref{lem_thetatransf}, and the fact that $\psi^3(M_2)=-i$.  \ \medskip \\ {\underline{\bf{Case} $\boldsymbol{m=1}$}.} \ \medskip  \ \\   
 We have by definition that $$\mathcal F_{1n}(\tau):=- i q^{-\frac{1}{32}}\frac{\eta^3(\tau) \vartheta\left(\frac{\tau}{2} + u_\tau^{(1n)};\tau\right)\vartheta\left(u_\tau^{(1n)} - \frac{\tau}{4}-\frac12;\tau\right)}{\vartheta\left(\frac{\tau}{4}+\frac12;\tau\right) \vartheta\left(\frac{\tau}{2};\tau\right) \vartheta(u_\tau^{(1n)};\tau) \vartheta(v_\tau^{(1n)};\tau)}.$$   Using transformation properties from (\ref{etatrgen}), (\ref{tt1}), (\ref{eqn_thetaprops1}), and (\ref{eqn_thetaprops2}), we find after some straightforward calculations that
 \begin{align}\label{eqn_f1nmodt}
 \mathcal F_{1n}(M_2\tau) = i (2\tau+1)^{\frac 12} \rho_{1n}(M_2) \mathcal F_{1n}(\tau),
 \end{align}
 where 
 \begin{align*} \rho_{12}(M_2) &:= q^{\frac{1}{32} + \frac54}  e\left(\frac{1}{2\tau+1}\left( \frac{-\tau}{32} \!+\! \left(\frac{3\tau}{4}\right)^2 \!+\! \left(\tau+\frac12\right)^2 \!-\! \left(\frac{5\tau}{4} + \frac12\right)^2 \!-\! \left(\frac{3\tau}{2}+\frac12\right)^2 \!-\! \left(\frac{\tau}{2}\right)^2 - \left(\frac{\tau}{4}\right)^2 \right)\right),
\\
 \rho_{13}(M_2) &:= q^{\frac{1}{32} + \frac14}  e\left(\frac{1}{2\tau+1}\left( \frac{-\tau}{32}  + \left(\frac{19\tau}{12}+\frac12\right)^2 + \left(\frac{\tau}{6} \right)^2 - \left(\frac{5\tau}{4}+\frac12\right)^2 - \left(\frac{\tau}{2}\right)^2- \left(\frac{\tau}{3}\right)^2 -\left(\frac{13\tau}{12}+\frac12\right)^2\right)\right), \\
  \rho_{14}(M_2) &:= q^{\frac{1}{32} + \frac{11}{12}}  e\left(\frac{1}{2\tau+1}\left( \frac{-\tau}{32}  + \left(\frac{7\tau}{12}\right)^2 + \left(\frac{7\tau}{6}+\frac12\right)^2 - \left(\frac{5\tau}{4} + \frac12\right)^2 - \left(\frac{\tau}{2}\right)^2 -  \left(\frac{\tau}{12}\right)^2 - \left(\frac{4\tau}{3}+\frac12\right)^2 \right)\right), \\
 \rho_{15}(M_2) &:= q^{\frac{1}{32} + \frac14}  e\left(\frac{1}{2\tau+1}\left( \frac{-\tau}{32}  + \left(\frac{3\tau}{2}+\frac12\right)^2 - \left(\frac{5\tau}{4} + \frac12\right)^2 - \left(\frac{\tau}{2}\right)^2 - \left(\tau+\frac12\right)^2 \right)\right), \\ 
  \rho_{17}(M_2) &:= q^{\frac{1}{32} + \frac{5}{12}}  e\left(\frac{1}{2\tau+1}\left( \frac{-\tau}{32}  + \left(\frac{17\tau}{12}+\frac12\right)^2 + \left(\frac{\tau}{3}\right)^2 - \left(\frac{5\tau}{4} + \frac12\right)^2 - \left(\frac{\tau}{2}\right)^2 -  \left(\frac{13\tau}{12}+\frac12\right)^2 - \left(\frac{ \tau}{6} \right)^2 \right)\right), \\
 \rho_{18}(M_2) &:= q^{\frac{1}{32} + \frac{7}{12}}  e\left(\frac{1}{2\tau+1}\left( \frac{-\tau}{32}  + \left(\frac{5\tau}{12}\right)^2+\left(\frac{4\tau}{3}+\frac12\right)^2 - \left(\frac{\tau}{2}\right)^2 - \left(\frac{\tau}{12}\right)^2 - \left(\frac{5\tau}{4}+\frac12\right)^2-\left(\frac{7\tau}{6}+\frac12\right)^2 \right)\right).
  \end{align*}  After simplifying, one finds that $\rho_{1n}(M_2)=-i$ for each admissible $n$.  Using this fact, Lemma \ref{lem_modfmn} follows from (\ref{eqn_f1nmodt}) for each $\mathcal F_{1n}$.     \ \medskip \\ {\underline{\bf{Case} $\boldsymbol{m\in\{2,3,4',4'',5,6\}}$}.} \ \medskip  \ \\   We proceed as above in the case $m=1$.  Using transformation properties from (\ref{eqn_thetaprops1}),  (\ref{eqn_thetaprops2}), and (\ref{tt1}), we find after some straightforward calculations that
 \begin{align}\label{eqn_f2nmodt}
 \mathcal F_{mn}(M_2\tau) = i (2\tau+1)^{\frac 12} \rho_{mn}(M_2) \mathcal F_{mn}(\tau),
 \end{align}
 where for any admissible $n$,
$$  \rho_{mn}(M_2) = \begin{cases} 1, & m \in\{2,4',4'',6\}, \\ -i, & m \in \{3,5\}. \end{cases}$$
 For example, for $(m,n) \in \{(2,2), (2,6), (3,4), (4',2), (4'',7), (5,6), (6,2)\},$ we have that
 \begin{align*} \rho_{22}(M_2) &:= q^{\frac{1}{32}}  e\left(\frac{1}{2\tau+1}\left( \frac{-\tau}{32}  \!+\! \left(\frac{7\tau}{4}+\frac12\right)^2  \!+\! \left(\tau+\frac12\right)^2  \!-\! \left(\frac{\tau}{4}  \right)^2 \!-\! \left(\frac{ \tau}{2} \right)^2 \!-\! \left(\frac{3\tau}{2}+\frac12\right)^2
 \!-\! \left(\frac{5\tau}{4}+\frac12\right)^2
  \right)\right) = 1,
\\
 \rho_{26}(M_2)  &:= q^{\frac{1}{32}- \frac12}  e\left(\frac{1}{2\tau+1}\left( \frac{-\tau}{32}  + \left(\frac{3\tau}{2}+\frac12\right)^2 - \left(\frac{\tau}{4} \right)^2 - \left(\frac{\tau}{2}\right)^2 - \left(\tau+\frac12\right)^2 \right)\right) = 1, \\\rho_{34}(M_2) &:= q^{\frac{1}{72} + 1}  e\left(\frac{1}{2\tau+1}\left( \frac{-\tau}{72}  \!+\! \left(\frac{2\tau}{3}\right)^2   \!+\! \left(\frac{7\tau}{6}+\frac12  \right)^2 \!-\! 2\left(\frac{4 \tau}{3}+\frac12 \right)^2 \!-\! \left(\frac{\tau}{2} \right)^2  - \left(\frac{\tau}{6}\right)^2\right)\right) = -i, 
  \\  \rho_{4'2}(M_2) &:= q^{\frac{25}{288} }  e\left(\!\frac{1}{2\tau+1}\!\left( \frac{-25\tau}{288}  \!+\!\left(\frac{19\tau}{12}\!+\!\frac12\right)^2   \!\!+\! \left( \tau\!+\!\frac12  \right)^2 \!\!-\!
 \left(\frac{\tau}{12}\right)^2 - \left(\frac{\tau}{2}\right)^2\!\!-\!
 \left(\frac{13 \tau}{12}\!+\!\frac12 \right)^2 \!\!-\! \left(\frac{3\tau}{2}\!+\!\frac12 \right)^2 \right)\!\right) \!=\! 1,
 \\
  \rho_{4''7}(M_2) &:= q^{\frac{1}{288} }  e\left(\frac{1}{2\tau+1}\left( \frac{-1\tau}{288}   \!+\! \left(\frac{7\tau}{12} \right)^2   \!+\! \left( \frac{\tau}{3}  \right)^2 \!-\!
 \left(\frac{5\tau}{12}\right)^2 - \left(\frac{\tau}{2}\right)^2 -
 \left(\frac{ \tau}{12}  \right)^2 \!-\! \left(\frac{\tau}{6} \right)^2 \right)\right) = 1,
 \\  \rho_{56}(M_2)  &:= q^{\frac{1}{18} + \frac{2}{3}}  e\left(\frac{1}{2\tau+1}\left( \frac{-\tau}{18}  \!+\! \left(\frac{5\tau}{12}\right)^2   \!-\! \left(\frac{7 \tau}{6}+\frac12 \right)^2 \!-\! \left(\frac{\tau}{2} \right)^2  - \left(\frac{\tau}{12}\right)^2 \right)\right) = -i, \\
  \rho_{62}(M_2) &:= q^{\frac{1}{72} }  e\left(\frac{1}{2\tau+1}\left( \frac{-\tau}{72}   \!+\! \left(\frac{11\tau}{6}\!+\!\frac12\right)^2   \!\!\!+\!  \left( \tau\!+\!\frac12  \right)^2 \!\!\!-\!
 \left(\frac{\tau}{3}\right)^2 \!\!\!-\! \left(\frac{\tau}{2}\right)^2 \!\!\!-\!
 \left(\frac{3 \tau}{2}\!+\!\frac12 \right)^2 \!\!\!-\! \left(\frac{4\tau}{3}\!+\!\frac12 \right)^2 \right)\right) \!=\! 1.
\end{align*} 
 \medskip \ \\ \emph{Proof of part (ii).} The proof in this case also follows by direct calculations using the definition of the functions $\mathcal F_{mn}$, as well the transformations
  \begin{align}\label{eqn_etvart} \eta(\tau + b)  = \zeta_{24}^b \eta(\tau), \ \ \ \
  \vartheta(z+a;\tau+b)  = (-1)^a \zeta_8^b \vartheta(z;\tau), \end{align} which hold for any $a,b \in \mathbb Z$, and follow from (\ref{etatrgen}), (\ref{tt1}), and (\ref{tt2}).  We provide details in the cases $m\in\{1,2,3,6\}$ and leave the remaining cases $m\in\{4',4'',5\}$ to the reader for brevity, as the proofs follow in a similar manner.
 \medskip \\ {\underline{\bf{Case $\boldsymbol{m\in\{1,2\}}$.}}}  In this case, $b_m=4, a_m=8$, and $t_m=-1/32$.  Using (\ref{eqn_etvart}) and a direct calculation, we find that the portion of $\mathcal F_{mn}$ independent of $\vartheta$-functions satisfies  \begin{align}\label{eqn_shift1} iw_m\eta^3(\tau+\kappa_{mn}b_m)e^{2\pi i t_m(\tau+\kappa_{mn}b_m)} = -\zeta_{a_m}^{-\kappa_{mn}} \cdot i w_m \eta^3(\tau)q^{t_m}.\end{align}    Thus, it suffices to show that under $\tau \mapsto \tau + \kappa_{mn}b_m$, the functions $\mathcal F_{mn}(\tau)/(i w_m \eta^3(\tau)q^{t_m})$, which are quotients of $\vartheta$-functions, map to $-\mathcal F_{mn}/(i w_m \eta^3(\tau)q^{t_m}).$
We compute using the definitions of $u_\tau^{(mn)}$ and $v_\tau^{(mn)}$ that
\begin{align*}u_{\tau+\kappa_{1n} b_1}^{(1n)}  &= \begin{cases}
u_{\tau}^{(1n)} \pm 1, & n\in\{2,3,4,7,8\}, \\
u_{\tau}^{(1n)}, & n= 5, \end{cases} \ \ \ \ v_{\tau+\kappa_{1n} b_1}^{(1n)} = \begin{cases}
v_{\tau}^{(1n)} +4, & n\in\{3,4\}, \\
v_{\tau}^{(1n)} + 2, & n\in\{2,7,8\},  \\
v_{\tau}^{(1n)} + 1, & n=5, \end{cases}
\\ u_{\tau+\kappa_{2n} b_2}^{(2n)}  &= \begin{cases}
u_{\tau}^{(2n)} \pm 1, & n\in\{2,3,4,7,8\}, \\
u_{\tau}^{(2n)}, & n= 6, \end{cases} \ \ \ \ v_{\tau+\kappa_{2n} b_2}^{(2n)} = \begin{cases}
v_{\tau}^{(2n)} +4, & n\in\{3,4\}, \\
v_{\tau}^{(2n)} + 2, & n\in\{2,7,8\},  \\
v_{\tau}^{(2n)} + 1, & n=6, \end{cases} \\ u_{\tau+\kappa_{1n} b_1}^{(11)}  &= \begin{cases}u_{\tau}^{(11)}+ 3, & n\in\{3,4,7,8\}, \\
u_{\tau}^{(11)} +1, & n\in\{2,5\}, \end{cases} \ \ \ \ \ \ \
u_{\tau+\kappa_{2n} b_2}^{(21)}  = \begin{cases}u_{\tau}^{(21)}+ 3, & n\in\{3,4,7,8\}, \\
u_{\tau}^{(21)} +1, & n\in\{2,6\}. \end{cases} \end{align*}
The claim now follows after combining the above with the transformation for the $\vartheta$-function given in (\ref{eqn_etvart}).
 \medskip \\ {\underline{\bf{Case $\boldsymbol{m\in\{3,6\}}$.}}}   In this case, $b_m=6, a_m=3,$ and $t_m=-1/72$.  In this case, analogous to (\ref{eqn_shift1}), we obtain
 $$  iw_m\eta^3(\tau+\kappa_{mn}b_m)e^{2\pi i t_m(\tau+\kappa_{mn}b_m)} = \zeta_{a_m}^{-\kappa_{mn}} \cdot i w_m \eta^3(\tau)q^{t_m}.$$    Thus, it suffices to show that under the functions $\mathcal F_{mn}(\tau)/(i w_m \eta^3(\tau)q^{t_m})$, which are quotients of $\vartheta$-functions, remain invariant under $\tau \mapsto \tau + \kappa_{mn}b_m$.  Using the definititions of $u_\tau^{(mn)}$ and $v_\tau^{(mn)},$ we find that
 \begin{align*}u_{\tau+\kappa_{3n} b_3}^{(3n)}  &= \begin{cases}
u_{\tau}^{(3n)} +2, & n=2, \\
u_{\tau}^{(3n)}+1, & n\in\{3,4,5,6\}, \\
u_\tau^{(3n)}, & n=7,
 \end{cases} \ \ \ \ v_{\tau+\kappa_{3n} b_3}^{(3n)} = \begin{cases}
v_{\tau}^{(3n)} +3, & n\in\{2,5,6\}, \\
v_{\tau}^{(3n)} + 2, & n\in\{3,4\},  \\
v_{\tau}^{(3n)} + 1, & n=7, \end{cases}
\\ u_{\tau+\kappa_{6n} b_6}^{(6n)}  &= \begin{cases}
u_{\tau}^{(6n)} +2, & n=2, \\
u_{\tau}^{(6n)}+1, & n\in\{3,4,5,6\}, \\
u_\tau^{(6n)}, & n=8,
 \end{cases} \ \ \ \ v_{\tau+\kappa_{6n} b_6}^{(6n)} = \begin{cases}
v_{\tau}^{(6n)} +3, & n\in\{2,5,6\}, \\
v_{\tau}^{(6n)} + 2, & n\in\{3,4\},  \\
v_{\tau}^{(6n)} + 1, & n=8, \end{cases} \\ u_{\tau+\kappa_{3n} b_3}^{(31)}  &= \begin{cases}u_{\tau}^{(31)}+ 2, & n\in\{2,3,4,7\}, \\
u_{\tau}^{(31)} +4, & n\in\{5,6\}, \end{cases} \ \ \ \
u_{\tau+\kappa_{6n} b_6}^{(61)}  = \begin{cases}u_{\tau}^{(61)}+ 2, & n\in\{2,3,4,8\}, \\
u_{\tau}^{(61)} +4, & n\in\{5,6\}. \end{cases} \end{align*}
The claim now follows after combining the above with the transformation for the $\vartheta$-function given in (\ref{eqn_etvart}).
  \end{proof}
\subsection{Proof of Proposition \ref{prop_eichlere}}\label{sec_qeichlere}
 We follow a method of proof and argument originally due to Zagier in \cite{Zagier}, which was later generalized  in \cite{BR}, and used also   in \cite{FKTY},  for example;  we refer the reader to these sources for more explicit details, and provide a detailed sketch of proof here.  The functions $E_m$ are modular forms of weight $3/2$, and satisfy, for all $\tau \in \mathbb H$ and  $\gamma=\sm{a}{b}{c}{d} \in \mathcal G_m \subseteq  \textnormal{SL}_2(\mathbb Z)$, the transformation
\begin{align}\label{eqn_Emmod} E_m(\gamma \tau)  = \nu_m(\gamma) (c\tau+d)^{\frac32} E_m(\tau).\end{align} Here, $\nu_m$ and $\mathcal G_m$ are suitable multipliers and subgroups (respectively), and can be explicitly determined using the definitions of the functions $E_m$.
We define the function $E_m^*(-\tau)$ ($\tau \in \mathbb H$) by
$$E_m^*(-\tau):=\int_{-\overline{\tau}}^{i\infty} \frac{E_m(u)}{\sqrt{u+\tau}}du.$$  Using (\ref{eqn_Emmod}), it is not difficult to show for all $\tau \in \mathbb H$ and $\gamma \in \mathcal G_m$ that
\begin{align}\label{eqn_Emstar} E_m^*( -\tau) -(-c\tau + d)^{-\frac12}\nu_m^{-1}(\gamma)E_m^*(\gamma(-\tau)) =  \int_{-\frac{d}{c}}^{i\infty} \frac{E_m(u)}{\sqrt{u+\tau}}du.\end{align}   Under a change of variable in the integrand, with an appropriate choice of matrix $\gamma$, up to multiplication by a constant (which can be explicitly determined), we find that the transformations given in (\ref{eqn_Emstar}) for the functions $E_m^*\left(-{2\tau}/{c_m^2}\right)$ are identical to the transformations given for the functions $V_{mn}(x)$ in Theorem \ref{thm_quantum} for $x\in S_{mn} \subseteq \mathbb Q$, as $\tau = x+iy \to x$ from the upper half-plane (as $y\to 0^+$), or equivalently, as
$z=-{2\tau}/{c_m^2} \to -{2x}/{c_m^2}$ from the lower half-plane.

On the other hand, we also have that the asymptotic expansions of $E_m^*(-\tau)$ and $\widetilde{E}_m(-\tau)$ agree at rational numbers $r/s$, that is, with $\tau=r/s + iy \in \mathbb H$, as $y\to 0^+$;  this fact is established  more generally in \cite[Proposition 2.1]{BR}.  Thus, the functions $\widetilde{E}_m$ inherit the transformation properties satisfied by the functions $E_m^*$ at appropriate rationals, and hence, transform (up to the aforementioned change of variable, up to a constant multiple) like the functions $V_{mn}$ in Theorem \ref{thm_quantum}, as claimed.

\section{Corollaries}\label{sec_cor}
In this section, we prove Corollary \ref{cor_hypergeometric}, in which we evaluate Eichler integrals of eta-theta functions $E_m$  appearing in Theorem \ref{thm_quantum} as finite $q$-hypergeometric sums, and establish related curious algebraic identities.  We define for $m\in \{1,2,3,5,6\}$ the numbers $d_m$ by
 $$d_m := \begin{cases}3, & m=1,2, \\ 1, & m=3,6,  
\\ 5, & m=5. \end{cases}$$
For $h/k \in \mathbb Q$ with $\gcd(h,k)=1$, we define for positive integers $m$ the numbers \begin{align}\label{def_HK} H_m = H_m (h,k) &:= \begin{cases} h, & mh+k > 0, \\ -h, & mh+k < 0  \end{cases}, \\ K_m = K_m (h,k) &:= |mh+k|. \nonumber \end{align}
\begin{proof}[Proof of Corollary \ref{cor_hypergeometric}]
We first establish (\ref{eqn_cor1}) and (\ref{eqn_cor2}).  To do so, we begin with parts (i) and (ii) of Theorem \ref{thm_quantum} in the case $n=1$.  We then use Lemma \ref{Vm1} to re-write the functions $V_{m1}$ in terms of the functions $f_m$ and $g_2$.  By Lemma \ref{lem_eta}, we have that the functions $f_m$ vanish at rationals in $S_{m1}$.  From Lemma \ref{lem_g_2}, we also have that the remaining functions in Lemma \ref{Vm1}, defined using the function $g_2$, are defined at rationals in $S_{m1}$. Moreover, the proof of Lemma \ref{lem_g_2} more specifically reveals that the functions defined using the infinite sums $g_2$ in Lemma \ref{Vm1} in fact truncate, and become finite sums.  Identities (\ref{eqn_cor1}) and (\ref{eqn_cor2}) of Corollary \ref{cor_hypergeometric} then follow by a direct calculation using the definition of the functions $F_{h,k}$ given in (\ref{def_Fhk}), and the numbers $H_m$ and $K_m$ in (\ref{def_HK}).  The claimed identities in (\ref{eqn_moreover1}) and (\ref{eqn_moreover2}) follow similarly.  We begin with part (iii) of Theorem \ref{thm_quantum} in the case $n=1$, then apply Lemma \ref{Vm1},  Lemma \ref{lem_eta}, and Lemma \ref{lem_g_2}.

\end{proof}
\section{Acknowledgements}
This research was supported by the American Institute of Mathematics through their SQuaREs (Structured Quartet Research Ensembles) program.  The authors are deeply grateful to AIM, and in particular to Brian Conrey and Estelle Basor, for the generous support and consistent encouragement throughout this project.  
The first author is additionally grateful for the support of NSF CAREER grant DMS-1449679, and for the hospitality provided by the Institute for Advanced Study, Princeton, under NSF grant DMS-128155.

\section*{Appendix}\label{sec_appendix}
\setcounter{table}{0}
\renewcommand{\thetable}{E\arabic{table}}

Here we tabulate all of our mock modular forms $V_{mn}$, for any admissible pair $(m,n)$, as originally defined as quotients of Lambert-type series and the eta-theta functions $e_n$, and also in terms of Zwegers' $\mu$-function.    These functions have \emph{normalized shadow} $E_m(\tau)$, meaning their shadows are equal to a constant multiple of the function $E_m(2\tau/c_m^2)$, where $c_m$ is defined in Section \ref{sec_quantum}.    We note that embedded in these tables are the definitions of the constants $w_m$, $t_m$, $u^{(mn)}_\tau$, and $v^{(mn)}_\tau$.

\begin{table}[h!]
\begin{center}
\caption{Mock theta functions with  normalized shadow $ E_1(\tau)$, where $u^{(1n)}_\tau-v^{(1n)}_\tau = -\frac14 \tau+\frac12$.}
\bgroup
\def\arraystretch{2}
\begin{tabular}{|c|c|c|}
\hline
$V_{1n}(\tau)$ & Series& $w_1 q^{t_1} \mu \left( u^{(1n)}_\tau,v^{(1n)}_\tau;\tau \right)$   \\
\hline
$V_{11}(\tau)$ & $\displaystyle{\frac{q^{-9/32}}{e_1(\tau/2)}\sum_{n\in\Z}\frac{(-1)^nq^{(n+1)^2/2}}{1+q^{n+1/4}}}$& $-q^{-1/32}\mu(\frac{\tau}{4}+\frac{1}{2}, \frac{\tau}{2}; \tau)$ \\
\hline
$V_{12}(\tau)$ & $\displaystyle{\frac{-q^{-9/32}}{e_2(\tau/2)}\sum_{n\in\Z}\frac{q^{(n+1)^2/2}}{1-q^{n+1/4}}}$& $-q^{-1/32}\mu(\frac{\tau}{4}, \frac{\tau}{2}-\frac{1}{2}; \tau)$ \\
\hline
$V_{13}(\tau)$ & $\displaystyle{\frac{q^{-9/32}}{e_3(\tau/72)}\sum_{n\in\Z}\frac{(-1)^nq^{(n+5/6)^2/2}}{1+q^{n+1/12}}}$& $-q^{-1/32}\mu(\frac{\tau}{12}+\frac{1}{2}, \frac{\tau}{3}; \tau)$ \\
\hline
$V_{14}(\tau)$ & $\displaystyle{\frac{-q^{-9/32}}{e_4(\tau/72)}\sum_{n\in\Z}\frac{q^{(n+5/6)^2/2}}{1-q^{n+1/12}}}$& $-q^{-1/32}\mu(\frac{\tau}{12}, \frac{\tau}{3}-\frac{1}{2}; \tau)$ \\
\hline
$V_{15}(\tau)$ & $\displaystyle{\frac{q^{-9/32}}{e_5(\tau/32)}\sum_{n\in\Z}\frac{(-1)^nq^{(n+3/4)^2/2}}{1+q^{n}}}$& $-q^{-1/32}\mu(\frac{1}{2}, \frac{\tau}{4}; \tau)$ \\
\hline
$V_{16}(\tau)$ &  --- & $-q^{-1/32}\mu(0, \frac{\tau}{4}-\frac{1}{2}; \tau)$ \\
\hline
$V_{17}(\tau)$ & $\displaystyle{\frac{q^{-9/32}}{e_7(\tau/18)}\sum_{n\in\Z}\frac{(-1)^nq^{(n+2/3)^2/2}}{1+q^{n-1/12}}}$& $-q^{-1/32}\mu(-\frac{\tau}{12}+\frac{1}{2}, \frac{\tau}{6}; \tau)$ \\
\hline
$V_{18}(\tau)$ & $\displaystyle{\frac{-q^{-9/32}}{e_8(\tau/18)}\sum_{n\in\Z}\frac{q^{(n+2/3)^2/2}}{1-q^{n-1/12}}}$& $-q^{-1/32}\mu(-\frac{\tau}{12}, \frac{\tau}{6}-\frac{1}{2}; \tau)$ \\
\hline
\end{tabular}
\egroup
\end{center}
\end{table}

\begin{table}[h!]
\begin{center}
\caption{Mock theta functions with normalized shadow $E_2(\tau)$, where $u^{(2n)}_\tau-v^{(2n)}_\tau = -\frac14\tau$.}\bgroup
\def\arraystretch{2}
\begin{tabular}{|c|c|c|}
\hline
$V_{2n}(\tau)$ & Series & $w_2 q^{t_2} \mu \left( u^{(2n)}_\tau,v^{(2n)}_\tau;\tau \right)$   \\
\hline
$V_{21}(\tau)$ &$ \displaystyle{\frac{-q^{-9/32}}{e_1(\tau/2)}\sum_{n\in\Z}\frac{(-1)^nq^{(n+1)^2/2}}{1-q^{n+1/4}}}$&$ iq^{-1/32}\mu(\frac{\tau}{4}, \frac{\tau}{2}; \tau) $\\
\hline
$V_{22}(\tau)$ &$ \displaystyle{\frac{q^{-9/32}}{e_2(\tau/2)}\sum_{n\in\Z}\frac{q^{(n+1)^2/2}}{1+q^{n+1/4}}}$&$ iq^{-1/32}\mu(\frac{\tau}{4}-\frac{1}{2}, \frac{\tau}{2}-\frac{1}{2}; \tau) $\\
\hline
$V_{23}(\tau)$ &$ \displaystyle{\frac{-q^{-9/32}}{e_3(\tau/72)}\sum_{n\in\Z}\frac{(-1)^nq^{(n+5/6)^2/2}}{1-q^{n+1/12}}}$&$ iq^{-1/32}\mu(\frac{\tau}{12}, \frac{\tau}{3}; \tau) $\\
\hline
$V_{24}(\tau)$ &$ \displaystyle{\frac{q^{-9/32}}{e_4(\tau/72)}\sum_{n\in\Z}\frac{q^{(n+5/6)^2/2}}{1+q^{n+1/12}}}$&$ iq^{-1/32}\mu(\frac{\tau}{12}-\frac{1}{2}, \frac{\tau}{3}-\frac{1}{2}; \tau) $\\
\hline
$V_{25}(\tau)$ & ---& $iq^{-1/32}\mu(0, \frac{\tau}{4}; \tau)$ \\
\hline
$V_{26}(\tau)$ &$ \displaystyle{\frac{q^{-9/32}}{e_6(\tau/32)}\sum_{n\in\Z}\frac{q^{(n+3/4)^2/2}}{1+q^{n}}}$&$ iq^{-1/32}\mu(-\frac{1}{2}, \frac{\tau}{4}-\frac{1}{2}; \tau) $\\
\hline
$V_{27}(\tau)$ &$ \displaystyle{\frac{-q^{-9/32}}{e_7(\tau/18)}\sum_{n\in\Z}\frac{(-1)^nq^{(n+2/3)^2/2}}{1-q^{n-1/12}}}$&$ iq^{-1/32}\mu(-\frac{\tau}{12}, \frac{\tau}{6}; \tau) $\\
\hline
$V_{28}(\tau)$ &$ \displaystyle{\frac{q^{-9/32}}{e_8(\tau/18)}\sum_{n\in\Z}\frac{q^{(n+2/3)^2/2}}{1+q^{n-1/12}}}$&$ iq^{-1/32}\mu(-\frac{\tau}{12}-\frac{1}{2}, \frac{\tau}{6}-\frac{1}{2}; \tau) $\\
\hline
\end{tabular}
\egroup
\end{center}
\end{table}

\begin{table}[h!]
\begin{center}
\caption{Mock theta functions with normalized shadow $E_3(\tau)$, where $u^{(3n)}_\tau-v^{(3n)}_\tau = -\frac16\tau+\frac12$.}
\bgroup
\def\arraystretch{2}
\begin{tabular}{|c|c|c|}
\hline
$V_{3n}(\tau)$ & Series & $w_3 q^{t_3} \mu \left( u^{(3n)}_\tau,v^{(3n)}_\tau;\tau \right)$   \\
\hline
$V_{31}(\tau)$&$ \displaystyle{\frac{q^{-2/9}}{e_1(\tau/2)}\sum_{n\in\Z}\frac{(-1)^nq^{(n+1)^2/2}}{1+q^{n+1/3}}}$&$ -q^{-1/72}\mu(\frac{\tau}{3}+\frac{1}{2}, \frac{\tau}{2}; \tau) $\\
\hline
$V_{32}(\tau)$&$ \displaystyle{\frac{-q^{-2/9}}{e_2(\tau/2)}\sum_{n\in\Z}\frac{q^{(n+1)^2/2}}{1-q^{n+1/3}}}$&$ -q^{-1/72}\mu(\frac{\tau}{3}, \frac{\tau}{2}-\frac{1}{2}; \tau) $\\
\hline
$V_{33}(\tau)$&$ \displaystyle{\frac{q^{-2/9}}{e_3(\tau/72)}\sum_{n\in\Z}\frac{(-1)^nq^{(n+5/6)^2/2}}{1+q^{n+1/6}}}$&$ -q^{-1/72}\mu(\frac{\tau}{6}+\frac{1}{2}, \frac{\tau}{3}; \tau) $\\
\hline
$V_{34}(\tau)$&$ \displaystyle{\frac{-q^{-2/9}}{e_4(\tau/72)}\sum_{n\in\Z}\frac{q^{(n+5/6)^2/2}}{1-q^{n+1/6}}}$&$ -q^{-1/72}\mu(\frac{\tau}{6}, \frac{\tau}{3}-\frac{1}{2}; \tau) $\\
\hline
$V_{35}(\tau)$&$ \displaystyle{\frac{q^{-2/9}}{e_5(\tau/32)}\sum_{n\in\Z}\frac{(-1)^nq^{(n+3/4)^2/2}}{1+q^{n+1/12}}}$&$ -q^{-1/72}\mu(\frac{\tau}{12}+\frac{1}{2}, \frac{\tau}{4}; \tau) $\\
\hline
$V_{36}(\tau)$&$ \displaystyle{\frac{-q^{-2/9}}{e_6(\tau/32)}\sum_{n\in\Z}\frac{q^{(n+3/4)^2/2}}{1-q^{n+1/12}}}$&$ -q^{-1/72}\mu(\frac{\tau}{12}, \frac{\tau}{4}-\frac{1}{2}; \tau) $\\
\hline
$V_{37}(\tau)$&$
\displaystyle{\frac{q^{-2/9}}{e_7(\tau/18)}\sum_{n\in\Z}\frac{(-1)^nq^{(n+2/3)^2/2}}{1+q^{n}}}$&$ -q^{-1/72}\mu(\frac{1}{2}, \frac{\tau}{6}; \tau)$\\
\hline
$V_{38}(\tau)$& ---&$ -q^{-1/72}\mu(0, \frac{\tau}{6}-\frac{1}{2}; \tau)$ \\
\hline
\end{tabular}
\egroup
\end{center}
\end{table}

\begin{table}[h!]
\begin{center}
\caption{Mock theta functions with normalized shadow $E_4(\tau)$, where $u^{(4'n)}_\tau-v^{(4n)}_\tau = -\frac52\tau$ and $u^{(4''n)}_\tau-v^{(4n)}_\tau = -\frac{1}{12}\tau$.}
\bgroup
\def\arraystretch{2}
\begin{tabular}{|c|c|c|}
\hline
$V_{4n}(\tau) =$  & Series& $w_4 q^{t_4'} \mu \left( u^{(4'n)}_\tau,v^{(4n)}_\tau;\tau \right)$    \\
$V_{4'n}(\tau)+V_{4''n}(\tau)$ & & + $w_4 q^{t_4''} \mu \left( u^{(4''n)}_\tau,v^{(4n)}_\tau;\tau \right)$ \\
\hline
$V_{41}(\tau)=$ & $\displaystyle{\frac{-q^{-121/288}}{e_1(\tau/2)}\sum_{n\in\Z}\frac{(-1)^nq^{(n+1)^2/2}}{1-q^{n+1/12}}}$&$ iq^{-25/288}\mu(\frac{\tau}{12}, \frac{\tau}{2}; \tau)$ \\
$V_{4'1}(\tau)+ V_{4''1}\tau)$&$
 +\displaystyle{\frac{-q^{-49/288}}{e_1(\tau/2)}\sum_{n\in\Z}\frac{(-1)^nq^{(n+1)^2/2}}{1-q^{n+5/12}}}$& $+iq^{-1/288}\mu(\frac{5\tau}{12}, \frac{\tau}{2}; \tau)$\\
 \hline
$V_{42}(\tau)=$ &$ \displaystyle{\frac{q^{-121/288}}{e_2(\tau/2)}\sum_{n\in\Z}\frac{q^{(n+1)^2/2}}{1+q^{n+1/12}}}$&$ iq^{-25/288}\mu(\frac{\tau}{12}-\frac{1}{2}, \frac{\tau}{2}-\frac{1}{2}; \tau)
$\\
$V_{4'2}(\tau) + V_{4''2}(\tau) $&
+$\displaystyle{\frac{q^{-49/288}}{e_2(\tau/2)}\sum_{n\in\Z}\frac{q^{(n+1)^2/2}}{1+q^{n+5/12}}}$&
$+iq^{-1/288}\mu(\frac{5\tau}{12}-\frac{1}{2}, \frac{\tau}{2}-\frac{1}{2}; \tau)$ \\
\hline
$V_{43}(\tau)=$ & $\displaystyle{\frac{-q^{-121/288}}{e_3(\tau/72)}\sum_{n\in\Z}\frac{(-1)^nq^{(n+5/6)^2/2}}{1-q^{n-1/12}}}$&$ iq^{-25/288}\mu(-\frac{\tau}{12}, \frac{\tau}{3}; \tau)$ \\
$V_{4'3}(\tau) + V_{4''3}(\tau) $&
 $+\displaystyle{\frac{-q^{-49/288}}{e_3(\tau/72)}\sum_{n\in\Z}\frac{(-1)^nq^{(n+5/6)^2/2}}{1-q^{n+1/4}}}$& $+iq^{-1/288}\mu(\frac{\tau}{4}, \frac{\tau}{3}; \tau)$\\
 \hline
$V_{44}(\tau)=$ &
 $\displaystyle{\frac{-q^{-121/288}}{e_4(\tau/72)}\sum_{n\in\Z}\frac{q^{(n+5/6)^2/2}}{1+q^{n-1/12}}}$&$ iq^{-25/288}\mu(-\frac{\tau}{12}-\frac{1}{2}, \frac{\tau}{3}-\frac{1}{2}; \tau)$ \\
$V_{4'4}(\tau) + V_{4''4}(\tau) $ &
 $+\displaystyle{\frac{-q^{-49/288}}{e_4(\tau/72)}\sum_{n\in\Z}\frac{q^{(n+5/6)^2/2}}{1+q^{n+1/4}}}$& $+iq^{-1/288}\mu(\frac{\tau}{4}-\frac{1}{2}, \frac{\tau}{3}-\frac{1}{2}; \tau)$ \\
 \hline
$V_{45}(\tau)=$ &$ \displaystyle{\frac{-q^{-121/288}}{e_5(\tau/32)}\sum_{n\in\Z}\frac{(-1)^nq^{(n+3/4)^2/2}}{1-q^{n-1/6}}}$&$ iq^{-25/288}\mu(-\frac{\tau}{6}, \frac{\tau}{4}; \tau)$ \\
$V_{4'5}(\tau) + V_{4''5}(\tau)$&$
+\displaystyle{\frac{-q^{-49/288}}{e_5(\tau/32)}\sum_{n\in\Z}\frac{(-1)^nq^{(n+3/4)^2/2}}{1-q^{n+1/6}}}$&
$+iq^{-1/288}\mu(\frac{\tau}{6}, \frac{\tau}{4}; \tau) $\\
\hline
$V_{46}(\tau)=$ &$ \displaystyle{\frac{q^{-121/288}}{e_6(\tau/32)}\sum_{n\in\Z}\frac{q^{(n+3/4)^2/2}}{1+q^{n-1/6}}}$&$ iq^{-25/288}\mu(-\frac{\tau}{6}-\frac{1}{2}, \frac{\tau}{4}-\frac{1}{2}; \tau)$ \\
$V_{4'6}(\tau) + V_{4''6} (\tau)$&$
+\displaystyle{\frac{q^{-49/288}}{e_6(\tau/32)}\sum_{n\in\Z}\frac{q^{(n+3/4)^2/2}}{1+q^{n+1/6}}}$&$
+iq^{-1/288}\mu(\frac{\tau}{6}-\frac{1}{2}, \frac{\tau}{4}-\frac{1}{2}; \tau) $\\
\hline
$V_{47}(\tau)=$ &$
\displaystyle{\frac{-q^{-121/288}}{e_7(\tau/18)}\sum_{n\in\Z}\frac{(-1)^nq^{(n+2/3)^2/2}}{1-q^{n-1/4}}}$&$ iq^{-25/288}\mu(-\frac{\tau}{4}, \frac{\tau}{6}; \tau)$\\
$V_{4'7}(\tau) + V_{4''7}(\tau) $&$
+\displaystyle{\frac{-q^{-49/288}}{e_7(\tau/18)}\sum_{n\in\Z}\frac{(-1)^nq^{(n+2/3)^2/2}}{1-q^{n+1/12}}}$&$
+iq^{-1/288}\mu(\frac{\tau}{12}, \frac{\tau}{6};\tau)$\\
\hline
$V_{48}(\tau)=$ &$\displaystyle{\frac{q^{-121/288}}{e_8(\tau/18)}\sum_{n\in\Z}\frac{q^{(n+2/3)^2/2}}{1+q^{n-1/4}}}$&$ iq^{-25/288}\mu(-\frac{\tau}{4}-\frac{1}{2}, \frac{\tau}{6}-\frac{1}{2}; \tau) $\\
$V_{4'8}(\tau) + V_{4''8}(\tau) $&$
+\displaystyle{\frac{q^{-49/288}}{e_8(\tau/18)}\sum_{n\in\Z}\frac{q^{(n+2/3)^2/2}}{1+q^{n+1/12}}}$&$
+iq^{-1/288}\mu(\frac{\tau}{12}-\frac{1}{2}, \frac{\tau}{6}-\frac{1}{2}; \tau)$ \\
\hline
\end{tabular}
\egroup
\end{center}
\end{table}

\begin{table}[h!]
\begin{center}
\caption{Mock theta functions with normalized shadow $E_5(\tau)$, where $u^{(5n)}_\tau-v^{(5n)}_\tau = -\frac13\tau+\frac12$.}
\bgroup
\def\arraystretch{2}
\begin{tabular}{|c|c|c|}
\hline
$V_{5n}(\tau)$ & Series & $w_5 q^{t_5} \mu \left( u^{(5n)}_\tau,v^{(5n)}_\tau;\tau \right)$   \\
\hline
$V_{51}(\tau)$&$ \displaystyle{\frac{q^{-25/72}}{e_1(\tau/2)}\sum_{n\in\Z}\frac{(-1)^nq^{(n+1)^2/2}}{1+q^{n+1/6}}}$&$ -q^{-1/18}\mu(\frac{\tau}{6}+\frac{1}{2}, \frac{\tau}{2}; \tau) $\\
\hline
$V_{52}(\tau)$&$ \displaystyle{\frac{-q^{-25/72}}{e_2(\tau/2)}\sum_{n\in\Z}\frac{q^{(n+1)^2/2}}{1-q^{n+1/6}}}$&$ -q^{-1/18}\mu(\frac{\tau}{6}, \frac{\tau}{2}-\frac{1}{2}; \tau) $\\
\hline
$V_{53}(\tau)$&$ \displaystyle{\frac{q^{-25/72}}{e_3(\tau/72)}\sum_{n\in\Z}\frac{(-1)^nq^{(n+5/6)^2/2}}{1+q^{n}}}$&$ -q^{-1/18}\mu(\frac{1}{2}, \frac{\tau}{3}; \tau) $\\
\hline
$V_{54}(\tau)$& ---&$ -q^{-1/18}\mu(0, \frac{\tau}{3}-\frac{1}{2}; \tau)$ \\
\hline
$V_{55}(\tau)$&$ \displaystyle{\frac{q^{-25/72}}{e_5(\tau/32)}\sum_{n\in\Z}\frac{(-1)^nq^{(n+3/4)^2/2}}{1+q^{n-1/12}}}$&$ -q^{-1/18}\mu(-\frac{\tau}{12}+\frac{1}{2}, \frac{\tau}{4}; \tau) $\\
\hline
$V_{56}(\tau)$&$ \displaystyle{\frac{-q^{-25/72}}{e_6(\tau/32)}\sum_{n\in\Z}\frac{q^{(n+3/4)^2/2}}{1-q^{n-1/12}}}$&$ -q^{-1/18}\mu(-\frac{\tau}{12}, \frac{\tau}{4}-\frac{1}{2}; \tau) $\\
\hline
$V_{57}(\tau)$&$
\displaystyle{\frac{q^{-25/72}}{e_7(\tau/18)}\sum_{n\in\Z}\frac{(-1)^nq^{(n+2/3)^2/2}}{1+q^{n-1/6}}}$&$ -q^{-1/18}\mu(-\frac{\tau}{6}+\frac{1}{2}, \frac{\tau}{6}; \tau)$\\
\hline
$V_{58}(\tau)$&$\displaystyle{\frac{-q^{-25/72}}{e_8(\tau/18)}\sum_{n\in\Z}\frac{q^{(n+2/3)^2/2}}{1-q^{n-1/6}}}$&$ -q^{-1/18}\mu(-\frac{\tau}{6}, \frac{\tau}{6}-\frac{1}{2}; \tau) $\\
\hline
\end{tabular}
\egroup
\end{center}
\end{table}

\begin{table}[h!]
\begin{center}
\caption{Mock theta functions with normalized shadow $E_6(\tau)$, where $u^{(6n)}_\tau-v^{(6n)}_\tau = -\frac16\tau$.}\bgroup
\def\arraystretch{2}
\begin{tabular}{|c|c|c|}
\hline
$V_{6n}(\tau)$ & Series & $w_6 q^{t_6} \mu \left( u^{(6n)}_\tau,v^{(6n)}_\tau;\tau \right)$ \\
\hline
$V_{61}(\tau)$&$ \displaystyle{\frac{-q^{-2/9}}{e_1(\tau/2)}\sum_{n\in\Z}\frac{(-1)^nq^{(n+1)^2/2}}{1-q^{n+1/3}}}$ &$ iq^{-1/72}\mu(\frac{\tau}{3}, \frac{\tau}{2}; \tau) $\\
\hline
$V_{62}(\tau)$&$ \displaystyle{\frac{q^{-2/9}}{e_2(\tau/2)}\sum_{n\in\Z}\frac{q^{(n+1)^2/2}}{1+q^{n+1/3}}}$&$ iq^{-1/72}\mu(\frac{\tau}{3}-\frac{1}{2}, \frac{\tau}{2}-\frac{1}{2}; \tau) $\\
\hline
$V_{63}(\tau)$&$ \displaystyle{\frac{-q^{-2/9}}{e_3(\tau/72)}\sum_{n\in\Z}\frac{(-1)^nq^{(n+5/6)^2/2}}{1-q^{n+1/6}}}$&$ iq^{-1/72}\mu(\frac{\tau}{6}, \frac{\tau}{3}; \tau) $\\
\hline
$V_{64}(\tau)$&$ \displaystyle{\frac{q^{-2/9}}{e_4(\tau/72)}\sum_{n\in\Z}\frac{q^{(n+5/6)^2/2}}{1+q^{n+1/6}}}$&$ iq^{-1/72}\mu(\frac{\tau}{6}-\frac{1}{2}, \frac{\tau}{3}-\frac{1}{2}; \tau) $\\
\hline
$V_{65}(\tau)$&$ \displaystyle{\frac{-q^{-2/9}}{e_5(\tau/32)}\sum_{n\in\Z}\frac{(-1)^nq^{(n+3/4)^2/2}}{1-q^{n+1/12}}}$&$ iq^{-1/72}\mu(\frac{\tau}{12}, \frac{\tau}{4}; \tau) $\\
\hline
$V_{66}(\tau)$&$ \displaystyle{\frac{q^{-2/9}}{e_6(\tau/32)}\sum_{n\in\Z}\frac{q^{(n+3/4)^2/2}}{1+q^{n+1/12}}}$&$ iq^{-1/72}\mu(\frac{\tau}{12}-\frac{1}{2}, \frac{\tau}{4}-\frac{1}{2}; \tau) $\\
\hline
$V_{67}(\tau)$&---&$ iq^{-1/72}\mu(0, \frac{\tau}{6}; \tau)$ \\
\hline
$V_{68}(\tau)$&$\displaystyle{\frac{q^{-2/9}}{e_8(\tau/18)}\sum_{n\in\Z}\frac{q^{(n+2/3)^2/2}}{1+q^{n}}}$&$ iq^{-1/72}\mu(-\frac{1}{2}, \frac{\tau}{6}-\frac{1}{2}; \tau) $\\
\hline
\end{tabular}
\egroup
\end{center}
\end{table}

\clearpage
\bibliographystyle{amsplain}

\end{document}